\theoremstyle{plain}
\newtheorem{Theorem}{Theorem}[section]
\newtheorem{Proposition}[Theorem]{Proposition}
\newtheorem{Corollary}[Theorem]{Corollary}
\newtheorem{Lemma}[Theorem]{Lemma}
\newtheorem{Definition}[Theorem]{Definition}
\newtheorem{Remark}[Theorem]{Remark}
\newtheorem{expl}[Theorem]{Example}
\author[1]{Iza Danielewska}
\author[1]{Bartosz Ko{\l}odziejek}
\author[1]{Jacek Weso{\l}owski}
\author[2]{Xiaolin Zeng}
\affil[1]{Warsaw University of Technology, Poland}
\affil[2]{Universit\'e de Strasbourg, France}
\date{}
\title{Graphical Negative Multinomial and Multinomial Models\\ with Dirichlet-type priors}
 \newcommand{\gbinom}[3]{\binom{#1}{#2}_{\!\!\!#3}} 
\newcommand{\gbinomm}[3]{\genfrac{[}{]}{0pt}{}{#1}{#2}_{\!#3}}
\begin{document}

\maketitle
\begin{abstract}
		Bayesian statistical graphical models are typically classified as either continuous and parametric (Gaussian, parameterized by the graph-dependent precision matrix with Wish\-art-type priors) or discrete and non-parametric (with graph-dependent structure of probabilities of cells and Dirichlet-type priors).  We propose to break this dichotomy by introducing two discrete parametric graphical models on finite decomposable graphs: the graph negative multinomial and the graph multinomial distributions  (the former related to the Cartier-Foata theorem for the graph generated  free quotient monoid).  These models interpolate between the product of univariate negative binomial laws and the negative multinomial distribution, and between the product of binomial laws and the multinomial distribution, respectively. We derive their Markov decompositions and provide related probabilistic representations. 
    
    We also introduce graphical versions of the Dirichlet and inverted Dirichlet distributions, which serve as conjugate priors for the two discrete graphical Markov models. We derive explicit normalizing constants for both graphical Dirichlet laws and establish their independence structure (a graphical version of neutrality), which yields a strong hyper Markov property for both Bayesian models. We also provide characterization theorems for graphical Dirichlet laws via respective graphical versions of neutrality, which extends previously known results.
\end{abstract}

MSC2020: 62H22 (Primary) 60E05, 62E10 (Secondary)

Keywords: multivariate count data, graphical model, hyper Markov property, Bayesian statistics, conjugate prior, decomposable graph, moral directed acyclic graph, Dirichlet distribution, multinomial distribution, negative multinomial distribution.

\section{Introduction}
\label{sec:orgd75a8a6}
Graphical models belong to the basic toolbox of modern statistical analysis. Parametric graphical models are of great interest since they allow to reduce the number of unknowns in the model. Typically, Bayesian continuous graphical models are Gaussian and thus purely parametric, with a Wishart-type prior for the covariance matrix. On the other hand,  Bayesian discrete graphical models are purely non-parametric, with Dirichlet-type priors on the probabilities of cells in multi-way tables, \cite{Dawid1993}.
In this paper we break this dichotomy by   introducing two  purely parametric discrete graphical models: the graph negative multinomial model, denoted \(\operatorname{nm}_G\) and the graph multinomial model, denoted \(\operatorname{mult}_G\), together with conjugate Dirichlet-type priors on their parameters. The two models  are defined (first in the non-Bayesian setting) in terms of random vectors \(N=(N_i)_{i\in V}\), where \(V\) is a vertex set of a finite  graph \(G=(V,E)\), and each \(N_i\) takes values in \(\mathbb{N}:=\{0,1 ,\ldots\}\), see Definition \ref{def-graph-multinomial-and-negative-multinomial-distributions}.  Our considerations are restricted to decomposable graphs; potential generalizations to non-decomposable models pose significant challenges, especially for the Bayesian inference.  Let us mention also that, compared to the abundant literature for Gaussian graphical models, literature for discrete graphical models is relatively limited, see e.g. \cite{CL06,CM12,LRR19,TLX22,R23}, so our paper contributes to slightly bridging the gap. 

Here are the key features of the proposed models:
\begin{enumerate}[label={\arabic*)}, itemjoin={,}, itemjoin*={, and}]
	
	  \item 
    The distributions $\operatorname{nm}_G$ and $\operatorname{mult}_G$ are defined via the graph-multinomial coefficients. These arise as the coefficients in the power series expansion of powers of so-called multivariate independence polynomial $\Delta_G$ and $\delta_G$  of the graph $G$ (see, e.g., \cite{goldwurm1998clique,levit2005independence,Horn1,Horn2,Mult24}).  We establish several fundamental combinatorial properties and representations of these coefficients; notably, when $G$ is complete, they reduce to the classical multinomial coefficients.
    
	\item The graph negative multinomial \(\operatorname{nm}_G\) and graph multinomial \(\operatorname{mult}_G\) generalize to a decomposable graph setting the negative multinomial and multinomial  models. The latter two are particular cases when the underlying graph is complete. On the other hand, when \(G\) is a completely disconnected graph, \(\operatorname{nm}_G\) is a product of independent univariate negative binomial distributions and \(\operatorname{mult}_G\) is a product of independent univariate binomial distributions. In this sense, each of our models interpolates between the two respective boundary cases, while taking into account the graph structure.
	
	\item There is a significant reduction of the number of unknowns in the parametric models we propose. In a standard non-parametric discrete graphical model with $d$ variables valued in finite sets of sizes $r_1,\ldots,r_d$, the number of unknowns, which are probabilities of cells in $r_1\times \cdots \times r_d$-table, is of order  $\prod_{i=1}^d\,r_i$, while in \(\operatorname{nm}_G\) and  \(\operatorname{mult}_G\) the number of unknowns (parameters) is $d+1$. 
	
	\item Both models possess the global graph Markov property, i.e. if \(A,B,C \subset  V\) is a decomposition of \(G\) (see Definition \ref{def-decomposable-graph}), where the clique \(C\) separates \(A,B\),  and \(N=(N_i)_{i\in V}\) is \(\operatorname{nm}_G\) or  \(\operatorname{mult}_G\) then, conditionally on \((N_i)_{i\in C}\), random vectors \((N_i)_{i\in A}\) and \((N_i)_{i\in B}\) are independent.

	\item 
	The clique marginals of our models are the negative multinomial and multinomial distributions with suitable parameters. In particular, if $G$ is a tree, then our models are pair-wise Markov random fields (which is the usual model for the multivariate count data \cite{Wain, Counts11, Counts20}) with
	\[
	\mathbb{P}(N=n) \propto \exp\left( \sum_{i\in V} \phi_i(n_i) + \sum_{\{i,j\}\in E} \psi(n_{i},n_j)\right),\qquad n\in \mathrm{supp}(N)\subset\mathbb{N}^V,
	\]
	where $(\phi_i)_{i\in V}$ and $\psi$ are the
	potential functions associated with the nodes and edges of the graph $G=(V,E)$, respectively, given by (first cases correspond to the $\operatorname{nm}_G$ model, and the second cases correspond to the $\operatorname{mult}_G$)
	\[
	e^{\phi_i(n)} = \begin{cases}    x_i^{n}  \binom{r+n-1}{n}^{1-\mathrm{deg}_G(i)}, \\
		y_i^{n}  \binom{r}{n}^{1-\mathrm{deg}_G(i)},
	\end{cases}\quad\mbox{and}\quad e^{\psi(n,m)} = \begin{cases}
		\binom{r+n+m-1}{n,m},\\
		\binom{r}{n,m}, 
	\end{cases}
	\]
	where $\deg_G(i)$ denotes the degree of a vertex $i$ in $G$.

	\item 
We provide a probabilistic interpretation for both models. For the graph negative multinomial model, we employ the framework of the non-commutative free quotient monoid and invoke the Cartier--Foata generalization of the MacMahon Master theorem. In contrast, the graph multinomial model admits a relatively simple interpretation via an urn model and we relate it to the hardcore model from statistical physics (see, e.g., \cite{Scott_2005}).

	\item 
	For both discrete models, we develop their Bayesian versions by introducing convenient conjugate prior distributions. The conjugate prior for the  \(\operatorname{nm}_G\) model is called the graph Dirichlet distribution while the conjugate prior for the \(\operatorname{mult}_G\)
	model is called the graph inverted Dirichlet distribution. These priors interpolate between the product of univariate first-kind beta and Dirichlet distributions, and the product of univariate second-kind beta and inverted Dirichlet distributions, respectively. We derive explicit forms of their normalizing constants and  establish  graphical versions of neutrality properties, i.e., an intrinsic independence structure of the (inverted) Dirichlet distribution.
	
	\item We prove that both such parametric discrete graphical Bayesian models are strong hyper Markov. We call them respectively the graph Dirichlet negative multinomial and the graph inverted Dirichlet multinomial. They are natural  generalizations of Dirichlet negative multinomial distribution and Dirichlet multinomial distribution, respectively. These classical 
    compound models have many applications, in particular, in Bayesian document clustering, see e.g. \cite{DC1, DC2, DC3}.
	
	\item For both graph Dirichlet-type distributions we prove characterizations by suitable graphical versions of neutrality property. As a consequence, the graph Dirichlet (resp. graph inverted Dirichlet) distribution is the only distribution that is strong hyper Markov in the graph negative multinomial model (resp. the graph multinomial model). 
\end{enumerate}

The paper is organized as follows: 
In Section \ref{sec:org2cca608}, we introduce necessary definitions and notation for graph theory and graph Markov distributions. In particular, in its second part, we define graph polynomials $\Delta_G$ and $\delta_G$ and graph multinomial coefficients. These polynomials are fundamental objects for the models we introduce and analyze in the paper. In Section \ref{sec:orgce3b2b0}, we define graph negative  multinomial and multinomial graph models and discuss their properties. Section \ref{sec:org7b431d8} provides probabilistic interpretations of these models. We introduce graphical Dirichlet-type laws and analyze their properties in Section \ref{sec:orge36368d}. In Section \ref{six} we introduce our Bayesian models by imposing graph Dirichlet laws as priors for parameters of graph negative multinomial and graph multinomial models; in particular, we prove the conjugacy and hyper Markov properties. Characterizations of graphical Dirichlet distributions by respective versions of the neutrality (independence) property  are given in Section \ref{sec:org9c216d9}, where we also relate these results to  uniqueness of priors for graph (negative) multinomial models with strong hyper Markov properties. All proofs are gathered in Section \ref{sec:org27fcc87}.

\section{Definitions and notations}
\label{sec:org2cca608}
\subsection{Graph theoretic notions}
\label{sec:orgb4d09d8}
Let \(G=(V,E)\) be a finite undirected simple graph with vertex set \(V\) and edge set \(E \subset \{\{i,j\},\ i\ne j,\ i,j\in V\}\). We denote \(i\sim j\) when \(\{i,j\}\in E\).  We say that $i,j\in V$, $i\neq j$, communicate if either $i\sim j$ or there exist distinct $\ell_1,\ldots,\ell_m\in V\setminus\{i,j\}$ with $m\ge 1$, such that with $\ell_0=i$ and $\ell_{m+1}=j$, we have $\ell_k\sim \ell_{k+1}$, $k=0,1,\ldots,m$. The set $i\leftrightarrow j=\{\ell_k,\,k=0,\ldots,m+1 \}$ is called a path connecting $i$ and $j$. A path $i\leftrightarrow i$ with $m\ge 2$, if it exists, is called a cycle.  

We denote by \(\mathfrak{nb}_G(i)\) the set of neighbors of \(i\in V\), and by $\overline{\mathfrak{nb}}_G(i)$ its closure,  defined as,
\begin{equation}
	\label{eq-nbGi}
	\begin{aligned}
		\mathfrak{nb}_G(i)=\{j\in V:\,j\sim i\}\quad \text{ and }\quad\overline{\mathfrak{nb}}_G(i)=\mathfrak{nb}_G(i)\cup\{i\}.
	\end{aligned}
\end{equation}
The subscript $G$ is omitted when there is no ambiguity.  The complement graph of \(G\) is the graph \(G^*=(V,E^*)\), where \(E^*=\{\{i,j\},\ i\ne j,\ i,j\in V,\ \{i,j\}\not\in E\}\).

For \(A\subset V\) we denote by \(G_A=(A,E_A)\) the graph induced in \(G\) by \(A\), i.e., \(\{i,j\}\in E_A\) i.f.f.  \(i,j\in A\) and \(\{i,j\}\in E\).  Note that for any \(A\subset V\), we have $(G^*)_A=(G_A)^*$.

A graph \(G=(V,E)\) is complete if \(E=\{\{i,j\},\ i\ne j, \ i,j\in V\}\); it is denoted by \(K_V\). A subset \(C\subset V\) is a clique in \(G\) if \(G_C=K_C\). By \(\mathcal{C}_G\) we denote the set of cliques in \(G\). In particular, \(\emptyset\in\mathcal{C}_G\) and $\{v\}\in \mathcal C_G$ for all $v\in V$. A clique \(C\) in \(G\) is called maximal if it is not a proper subset of another clique. We denote by \(\mathcal{C}_{G}^+\) the collection of maximal cliques in \(G\).

\begin{Definition} \label{def-decomposable-graph}  Let \(G=(V,E)\) be an undirected graph.  
	\begin{enumerate}[label={\arabic*)}, itemjoin={,}, itemjoin*={, and}]
		\item For non-empty, disjoint subsets \(A,B,C\in V\), we say that	\(C\)  separates \(A,B\) in $G$ if, for all \(a\in A,b\in B\), any path \(a\leftrightarrow b\) intersects  \(C\).
		\item If $A\cup B\cup C=V$ and $C\in\mathcal{C}_{G}$ separates \(A\) and \(B\) in $G$, we say that $(A,B,C)$ is a decomposition of $G$.
		\item A graph $G=(V,E)$ is decomposable if either $G=K_V$ or there exists a decomposition $(A,B,C)$ of $G$ such that \(G_{A\cup C}\) and \(G_{B\cup C}\) are decomposable.
	\end{enumerate}
\end{Definition}
Decomposable graphs coincide with chordal graphs, meaning they have no induced cycles of length greater than 3.
It is known, see e.g., \cite{PeytonBlair}, that a graph \(G\) is decomposable if and only if the set of its maximal cliques \(\mathcal{C}^+_G\) allows a perfect ordering. This means that the elements of \(\mathcal{C}^+_G\) can be labeled \(C_1,\ldots,C_K\) in such a way that for any \(k\in\{2,\ldots,K\}\), there exists \(\ell\in\{1,\ldots,k-1\}\) such that
\begin{equation}
	\label{eq-perfect-ordering-max-cliques}
	\begin{aligned}
		\emptyset\neq S_k:=C_k\bigcap\,\left(\bigcup_{n =1}^{k-1}\,C_{n}\right)\subset C_{\ell}.
	\end{aligned}
\end{equation}
The sets \(S_2,\ldots,S_K\) defined in \eqref{eq-perfect-ordering-max-cliques} are called minimal separators (they need not be distinct). The family \(\mathcal{S}^-_G=\{S\subset V\colon \exists k\in\{2,\ldots,K\}, \,S=S_k\}\) is called the set of minimal separators. For \(S\in \mathcal{S}_G^-\), the number of indices \(k\) such that \(S=S_k\) is called the multiplicity of the minimal separator \(S\) and is denoted by \(\nu_S\). Both \(\mathcal{S}^-_G\) and \(\nu_S\) do not depend on the perfect ordering of \(\mathcal{C}_G^+\), and clearly \(\mathcal{S}^-_G\subset \mathcal{C}_G\).

Given a graph \(G=(V,E)\), if for each edge \(\{i,j\}\in E\), we choose one direction between ordered pairs \((i,j)\) and \((j,i)\), we obtain a directed graph (digraph). Such a digraph is denoted by \(\mathcal{G}=(V,\vec{E})\), where \(\vec{E} \subset \{(i,j),\ i\ne j,\ i,j\in V\}\) are the directed edges. A directed edge \((i,j)\) in \(\vec{E}\) is denoted \(i\to j\). We say that  \(G=(V,E)\) is the skeleton of \(\mathcal{G}=(V,\vec{E})\) if and only if  \(E=\{\{i,j\}:\;\mbox{either }\;i\to j\in \vec{E}\;
\mbox{or }\;j\to i\in\vec{E}\}\). We say that $(i=:\ell_0,\ell_1,\ldots,\ell_m,\ell_{m+1}=j)$ is a directed path from $i$ to $j$ in \(\mathcal{G}=(V,\vec{E})\)  if $\ell_k\to\ell_{k+1}\in\vec{E}$ for $k=0,1,\ldots,m$. In the case $j=i$, we have a directed cycle.

\begin{Definition}\label{def-pa-ch-nd-DAG}
	For a directed graph \(\mathcal G=(V,\vec{E})\), we introduce four functions called parents, children, descendants,  ancestors, denoted as \(\mathfrak{pa},\,\mathfrak{ch},\,\mathfrak{de},\,\mathfrak{an}\,\colon V\to \mathcal{P}(V)\), where \(\mathcal{P}(V)\) is the power set of \(V\), as follows
	\[\mathfrak{pa}(i)=\{j\in V\colon j\to i\},\qquad \mathfrak{ch}(i)=\{j\in V\colon i\to j\},\]
	\[\mathfrak{de}(i)=\bigcup_{k\ge 1}\,\mathfrak{ch}^k(i),\quad \mbox{where}\quad \mathfrak{ch}^1=\mathfrak{ch}\quad\mbox{and}\quad  \mathfrak{ch}^k(i)=\mathfrak{ch}(\mathfrak{ch}^{k-1}(i)),\;\;k\ge 2,\]
	\[\mathfrak{an}(i)=\bigcup_{k\ge 1}\,\mathfrak{pa}^k(i),\quad \mbox{where}\quad \mathfrak{pa}^1=\mathfrak{pa}\quad\mbox{and}\quad  \mathfrak{pa}^k(i)=\mathfrak{pa}(\mathfrak{pa}^{k-1}(i)),\;\;k\ge 2.\]
	Their closures,  \(\overline{\mathfrak{pa}},\,\overline{\mathfrak{ch}},\,\overline{\mathfrak{de}},\,\overline{\mathfrak{an}}\,\colon V\to \mathcal{P}(V)\) are defined by
	\[\overline{\mathfrak{pa}}(i)=\mathfrak{pa}(i)\cup\{i\},\quad \overline{\mathfrak{ch}}(i)=\mathfrak{ch}(i)\cup\{i\},\quad \overline{\mathfrak{de}}(i)=\mathfrak{de}(i)\cup\{i\},\quad \overline{\mathfrak{an}}(i)=\mathfrak{an}(i)\cup\{i\}.\]
	The non-descendants function $\mathfrak{nde}:V\to\mathcal P(V)$ is defined by
	\[\mathfrak{nde}(i) = V \setminus \overline{\mathfrak{de}}(i).\]
	\end{Definition}
It is clear that a directed graph \(\mathcal G\) is uniquely determined by its parent function \(\mathfrak{pa}\). Sometimes, to avoid ambiguity,  we add a subscript $\mathcal G$ to these functions, e.g., we write \(\mathfrak{pa}_{\mathcal{G}}\) instead of \(\mathfrak{pa}\). 

A sink for a directed graph \(\mathcal{G}\) is a vertex with no children.

A directed acyclic graph (DAG) is a digraph with no directed cycles.  A DAG \(\mathcal G\) is said to be moral if, when \(i\) and \(i'\) are both parents of \(j\), there is an (directed) edge between \(i\) and \(i'\). Equivalently, there are no induced directed subgraphs of the form \(i\rightarrow j \leftarrow i'\) (called a v-structure) in \(\mathcal G\); note that induced digraphs are defined similarly to the definition of induced (undirected) graphs. It turns out that all induced subgraphs of a (moral) DAG are (moral) DAGs. The skeleton of a moral DAG is necessarily decomposable,  but there exist DAGs with decomposable skeletons that are not moral.

A vertex \(i\in V\) is called simplicial in an undirected graph \(G\) if its neighbors \(\mathfrak{nb}_G(i)\) form a clique in \(G\).   A permutation of vertices $(v_1,\ldots,v_{|V|}) $ from $V$ is said to be a perfect elimination order if each $v_i$ is simplicial in $G_{\{v_i,\ldots,v_{|V|}\}}$. 
It is known that a graph is decomposable if and only if there exists a perfect elimination order of its vertices. Given a perfect elimination order $(v_1,\ldots,v_{|V|})$ one can easily construct a moral DAG in the following way: for any $1\le i<j\le |V|$, if $v_i\sim v_j$, then set $v_j\to v_i$. Moreover, any moral DAG can be constructed in this way for some perfect elimination order; see \cite{PeytonBlair}.

\subsection{Graph Markov distributions}
\label{sec:org0110ac4}
Let $V$ be a  finite set and  $A\subset V$. For $x=(x_i,\,i\in V)$, we write $x_A=(x_i,i\in A)$. 

Let $G=(V,E)$ be an undirected graph. We say that a discrete random vector \(X=(X_i)_{i\in V}\) satisfies the global Markov property with respect to $G$ (we will also say that the distribution of $X$ is $G$-Markov) if for any $A,B,S\subset V$ such that $S$ separates $A$ and $B$ in $G$, random vectors $X_A$ and $X_B$ are conditionally independent given $X_S$. 

If $G$ is decomposable, then the global Markov property is equivalent to the factorization 
\begin{equation}
	\mathbb{P}(X=x) \label{fact:undirected}
	= \frac{\prod_{C\in \mathcal{C}^+_G} \mathbb{P}(X_{C}=x_C)}{\prod_{S\in \mathcal{S}^-_G} \mathbb{P}(X_{S}=x_S)^{\nu_S}},\quad x\in\mathrm{supp}\,\mathbb P_X, 
\end{equation}
where $\mathbb P_X$ is the distribution of $X$.

We say that a distribution of $X$ is Markov with respect to a DAG $\mathcal G$ (defined by a parent function $\mathfrak{pa}$) if
\begin{equation}
	\mathbb{P}(X=x)=\prod_{i\in V} \mathbb{P}(X_i=x_i| X_{\mathfrak{pa}(i)}=x_{\mathfrak{pa}(i)}),\quad x\in\mathrm{supp}\,\mathbb P_X, \label{fact:directed}
\end{equation}
If \(\mathcal G\) is a moral DAG with  skeleton \(G=(V,E)\), then according to Lemma 3.21 of \cite{lauritzen1996graphical}, \(X\) is Markov with respect to \(\mathcal G\) if and only if \(X\) is Markov with respect to any moral DAG with skeleton $G$ if and only if  \(X\) is global Markov with respect to \(G\). 
Alternatively, factorizations \eqref{fact:undirected} for a decomposable \(G\) and \eqref{fact:directed} for a/every moral DAG \(\mathcal{G}\) with skeleton \(G\) are equivalent; see Proposition 3.28 of \cite{lauritzen1996graphical}.
\subsection{Graph polynomials}
\label{sec:orge67f81b}
For \(A\subset V\), we define \(\pi_A\colon \mathbb{R}^{V}\to \mathbb{R}\) by
\[\pi_A(x) = \prod_{i\in A} x_i,\qquad x\in \mathbb{R}^{V}.\]

\begin{Definition}[Clique polynomials]
	For a finite simple graph \(G=(V,E)\) define  two polynomials of $|V|$ variables,   \(\Delta_G\) and \(\delta_G\colon \mathbb{R}^{V}\to \mathbb{R}\), by
	\begin{equation}
		\label{eq-clique-polynomial-DeltaG}
		\begin{aligned}
			\Delta_G=\sum_{C\in \mathcal{C}_{G^*}}(-1)^{|C|} \pi_C
		\end{aligned}
	\end{equation}
	and
	\begin{equation}
		\label{eq-clique-polynomial-deltaG}
		\begin{aligned}
			\delta_G=\sum_{C\in \mathcal{C}_{G^*}}\pi_C,
		\end{aligned}
	\end{equation}
	where \(|C|\) denotes the cardinality of \(C\), and \(\mathcal{C}_{G^*}\) is the collection of cliques of the complement graph \(G^*\).
	\label{def-click-polynomial}
\end{Definition}

The polynomials $\Delta_G$ and $\delta_G$ are the so-called multivariate independence polynomials (or clique polynomials of the complement graph $G^*$), \cite{goldwurm1998clique,levit2005independence, Horn1, Horn2, Mult24}. Below, we list some special properties of $\Delta_G$ and $\delta_G$, which will be used in the sequel. 

\begin{Lemma}\label{lem-properties-of-clique-polynomials}
	Let \(G=(V,E)\) be a finite simple graph.
	\begin{enumerate}[label={\arabic*)}, itemjoin={,}, itemjoin*={, and}]
		\item If \(A \subset  V\), then
		\begin{equation}
			\label{eq-DeltaA}
			\begin{aligned}
				\Delta_G(x)_{|x_{V \setminus A}=0} = \Delta_{G_A}(x_A)=:\Delta_A(x),\  x_A \in \mathbb{R}^A,
			\end{aligned}
		\end{equation}
		where \(\Delta_G(x)_{|x_{V \setminus A}=0}\)  denotes \(\Delta_G\) evaluated at $x\in\mathbb R^V$  with coordinates \(x_i=0,\ \forall\, i\in V\setminus A\).
		\item  If \(A,B \subset  V\) are disconnected in $G$, i.e., there is no edge with one of its endpoints in \(A\) and the other in \(B\), then
		\begin{equation}
			\label{eq-ilocz}
			\begin{aligned}
				\Delta_{A\cup B}=\Delta_A \Delta_B.
			\end{aligned}
		\end{equation}
		\item For any \(C\in \mathcal{C}_G\),
		\begin{equation}
			\label{eq-Delta-G-C}
			\begin{aligned}
				\Delta_G=\Delta_{V\setminus C}-\sum_{i\in C} \pi_i \Delta_{\mathfrak{nb}_{G^*}(i)}\quad\mbox{and}\quad \delta_G=\delta_{V \setminus  C}+\sum_{i\in C} \pi_i \delta_{\mathfrak{nb}_{G^*}(i)},
			\end{aligned}
		\end{equation}
		where \(\pi_i:=\pi_{\{i\}}\).
        
		In particular, for any $i\in V$ 
		\begin{equation}
			\label{eq-Delta_GV-v}
			\begin{aligned}
				\Delta_G=\Delta_{V \setminus \{i\}}-\pi_i \Delta_{\mathfrak{nb}_{G^*}(i)}\quad \mbox{and}\quad \delta_G=\delta_{V \setminus \{i\}}+\pi_i \delta_{\mathfrak{nb}_{G^*}(i)}.
			\end{aligned}
		\end{equation}
		\item For a simplicial vertex $i_0$ in $G$, denote $\tilde V=V\setminus\{i_0\}$ and $\tilde G=G_{\tilde V}$. Define $\tilde x, \check{x}\in \mathbb{R}^{\tilde V}$ by $\tilde x_i = x_i/(1-x_{i_0})$, $\check{x}_i = x_i/(1+x_{i_0})$ for $i\in \mathfrak{nb}_G(i_0)$ and $\tilde x_i=x_i = \check{x}_i $ otherwise. Then
		\begin{align}\label{eq:Deltax}
			\Delta_G(x)=(1-x_{i_0})\Delta_{\tilde G}\left(\tilde x\right)\quad \mbox{and}\quad \delta_G(x)=(1+x_{i_0})\delta_{\tilde G}\left(\check x\right).
		\end{align}
	\end{enumerate}
\end{Lemma}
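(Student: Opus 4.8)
The plan is to read all four statements directly off the definition $\Delta_G=\sum_{C\in\mathcal C_{G^*}}(-1)^{|C|}\pi_C$ by partitioning the index set $\mathcal C_{G^*}$ in a suitable way; the only external inputs are \eqref{eq-ind}, i.e. $(G_A)^*=(G^*)_A$, and the elementary remark that $\delta_G(x)=\Delta_G(-x)$ (replacing each $x_i$ by $-x_i$ makes $\pi_C$ pick up the factor $(-1)^{|C|}$, which cancels the sign in $\Delta_G$). This last remark reduces every assertion about $\delta_G$ to the corresponding one about $\Delta_G$, so below I only treat $\Delta_G$. For \eqref{eq-DeltaA}: setting $x_i=0$ for all $i\in V\setminus A$ annihilates $\pi_C$ whenever $C\not\subseteq A$, and the surviving cliques $C\in\mathcal C_{G^*}$ with $C\subseteq A$ are precisely the cliques of $(G^*)_A=(G_A)^*$, so the remaining sum is $\Delta_{G_A}(x_A)$. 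For \eqref{eq-ilocz}: when $A,B$ are disconnected in $G$, every pair $\{a,b\}$ with $a\in A$, $b\in B$ is an edge of $G^*$, so a subset of $A\cup B$ is a clique of $(G^*)_{A\cup B}=(G_{A\cup B})^*$ if and only if it is the disjoint union of a clique of $(G_A)^*$ and a clique of $(G_B)^*$; writing $C=C_A\sqcup C_B$ and $(-1)^{|C|}=(-1)^{|C_A|}(-1)^{|C_B|}$, the sum defining $\Delta_{A\cup B}$ factors as $\Delta_A\Delta_B$.

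The identity \eqref{eq-Delta-G-C} is the heart of the lemma, and the key point is that a clique $C$ of $G$ is an \emph{independent} set of $G^*$; hence $|D\cap C|\le 1$ for every $D\in\mathcal C_{G^*}$, and I split the sum over $\mathcal C_{G^*}$ along this dichotomy. The terms with $D\cap C=\emptyset$ run over the cliques of $(G^*)_{V\setminus C}=(G_{V\setminus C})^*$ and contribute $\Delta_{V\setminus C}$ (by \eqref{eq-DeltaA}). If $D\cap C=\{i\}$ for some $i\in C$, write $D=\{i\}\sqcup D'$: then $D'$ is a clique of $G^*$ every vertex of which is $G^*$-adjacent to $i$, i.e. $D'\subseteq\mathfrak{nb}_{G^*}(i)$, and conversely each clique $D'$ of $(G^*)_{\mathfrak{nb}_{G^*}(i)}=(G_{\mathfrak{nb}_{G^*}(i)})^*$ yields such a $D$, automatically with $D\cap C=\{i\}$ since $\mathfrak{nb}_{G^*}(i)\cap C=\emptyset$ by $G^*$-independence of $C$. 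As $(-1)^{|D|}=-(-1)^{|D'|}$ and $\pi_D=\pi_i\pi_{D'}$, the $i$-block contributes $-\pi_i\Delta_{\mathfrak{nb}_{G^*}(i)}$; summing over $i\in C$ gives the first identity of \eqref{eq-Delta-G-C}, and \eqref{eq-Delta_GV-v} is its case $C=\{i\}$.

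Finally, for \eqref{eq:Deltax}, set $N:=\mathfrak{nb}_G(i_0)$, which is a clique of $G$ (and of $\tilde G$) by simpliciality of $i_0$, and $M:=\mathfrak{nb}_{G^*}(i_0)=V\setminus\overline{\mathfrak{nb}}_G(i_0)=\tilde V\setminus N$. First, \eqref{eq-Delta_GV-v} at $i_0$ gives $\Delta_G=\Delta_{\tilde V}-x_{i_0}\Delta_M$, where $\Delta_{\tilde V}=\Delta_{\tilde G}(x_{\tilde V})$. Next, applying the first identity of \eqref{eq-Delta-G-C} to $\tilde G$ with the clique $N$, and noting that for $i\in N$ the vertex $i$ is $G$-adjacent to $i_0$ and to every other vertex of $N$, whence $\mathfrak{nb}_{(\tilde G)^*}(i)=\mathfrak{nb}_{G^*}(i)\subseteq M$, we get $\Delta_{\tilde G}=\Delta_M-\sum_{i\in N}\pi_i\Delta_{\mathfrak{nb}_{G^*}(i)}$; substituting, $\Delta_G=(1-x_{i_0})\Delta_M-\sum_{i\in N}x_i\Delta_{\mathfrak{nb}_{G^*}(i)}$. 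Since $\Delta_M$ and each $\Delta_{\mathfrak{nb}_{G^*}(i)}$, $i\in N$, involve only the variables indexed by $M$, they are unchanged under $x\mapsto\tilde x$, whereas $\pi_i(\tilde x)=x_i/(1-x_{i_0})$ for $i\in N$; therefore $(1-x_{i_0})\Delta_{\tilde G}(\tilde x)=(1-x_{i_0})\Delta_M-\sum_{i\in N}x_i\Delta_{\mathfrak{nb}_{G^*}(i)}=\Delta_G(x)$, and the companion identity for $\delta_G$ follows from $\delta_G(x)=\Delta_G(-x)$.

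I expect the only genuinely nontrivial step to be the independence observation underlying \eqref{eq-Delta-G-C} (a $G$-clique is $G^*$-independent, so $G^*$-cliques meet it in at most one vertex), together with the bookkeeping in \eqref{eq:Deltax}, where simpliciality is used precisely to ensure that all the complement-neighbourhoods $\mathfrak{nb}_{(\tilde G)^*}(i)$, $i\in N$, lie inside $M=\mathfrak{nb}_{G^*}(i_0)$; the remaining manipulations are routine.
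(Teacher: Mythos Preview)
Your proof is correct and, for parts 1)--3), follows essentially the same line as the paper: evaluation at zero picks out cliques in $A$, disconnected pieces factor because every cross-pair is a $G^*$-edge, and the decomposition in \eqref{eq-Delta-G-C} hinges on the observation that a $G$-clique is $G^*$-independent so that every $D\in\mathcal C_{G^*}$ meets $C$ in at most one vertex.

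For part 4) your argument differs from the paper's. The paper partitions $\mathcal C_{G^*}$ directly into three pieces according to whether a clique contains $i_0$, avoids $\overline{\mathfrak{nb}}_G(i_0)$, or meets $\mathfrak{nb}_G(i_0)$ but not $i_0$, and then compares the resulting sums for $\Delta_G(x)$ and $\Delta_{\tilde G}(\tilde x)$. You instead peel off $i_0$ via \eqref{eq-Delta_GV-v} and then expand $\Delta_{\tilde G}$ along the clique $N=\mathfrak{nb}_G(i_0)$ using \eqref{eq-Delta-G-C}, exploiting that simpliciality forces $\mathfrak{nb}_{(\tilde G)^*}(i)=\mathfrak{nb}_{G^*}(i)\subseteq M$ for each $i\in N$. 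Your route is a clean reuse of part 3) and avoids introducing the auxiliary partition; the paper's route is a self-contained clique count. Both buy the same identity with the same amount of work, so this is a stylistic rather than substantive difference.
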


\begin{Definition}
	For a decomposable graph \(G=(V,E)\), define
	\begin{equation}
		\label{eq-support-MG}
		\begin{aligned}
			M_G=\left\{x\in(0,\infty)^V:\ \ \Delta_A(x) > 0\,\ \forall A \subset  V\right\}.
		\end{aligned}
	\end{equation}
	\label{def-support-of-graph-dirichlet}
\end{Definition}
In the sequel, we will only consider \(\Delta_G\) as a function defined on \(M_G\), and \(\delta_G\) as a function defined on \((0,\infty)^V\); that is, variable \(x\in M_G\) for \(\Delta_G(x)\) and  variable \(y\in (0,\infty)^V\) for \(\delta_G(y)\). 
\begin{Remark}
	\label{rmk-remark-on-the-support-set}
	We note that 
	\[\{x\in (0,\infty)^V\colon \sum_{i\in V}x_i < 1\}\subset M_G\subset(0,1)^V.\]
	Moreover, in the definition of \(M_G\), many of the conditions \(\Delta_A(x)>0\), \(A\subset V\), are redundant. For example, in the case of complete graph \(G=K_V\), all these conditions follow from \(\Delta_G(x)>0\). However, \(\Delta_G(x)>0\) does not imply that \(\Delta_A(x)>0\) for every \(A\subset V\)  for a general (decomposable) graph; for instance, when \(G\) is a graph with two maximal cliques \(\{1,2,3\}\) and \(\{3,4,5\}\). The set \(M_G\) is, in general, a complicated manifold, but under a proper change of  coordinates,  see  Definition \ref{def-factorizing-coordinates} and Lemma \ref{lem-clique-poly-new-coord} below,  it becomes the unit cube.
\end{Remark}
We define new coordinate systems related to moral DAGs of a given skeleton. 
\begin{Definition}[Factorizing coordinates]	\label{def-factorizing-coordinates}
	Let \(\mathcal{G}\)  be a moral DAG with skeleton \(G=(V,E)\).
	\begin{enumerate}[label={\arabic*)}, itemjoin={,}, itemjoin*={, and}]
		\item Define \(u^{\mathcal{G}}\colon M_G \to \mathbb{R}^V\) by
		\begin{equation}
			\label{eq-u-calG}
			\begin{aligned}
				u^{\mathcal{G}}_i:=u^{\mathcal{G}}_i(x)=1- \frac{\Delta_{\overline{\mathfrak{de}}(i)}(x)}{\Delta_{\mathfrak{de}(i)}(x)}.
			\end{aligned}
		\end{equation}
		\item Define \(w^{\mathcal{G}}\colon (0,\infty)^V\to \mathbb{R}^V\) by
		\begin{equation}
			\label{eq-w-calG}
			\begin{aligned}
				w^{\mathcal{G}}_i:=w^{\mathcal{G}}_i(y)= \frac{\delta_{\overline{\mathfrak{de}}(i)}(y)}{\delta_{\mathfrak{de}(i)}(y)}-1.
			\end{aligned}
		\end{equation}
	\end{enumerate}
\end{Definition}
In Lemma \ref{lem-clique-poly-new-coord} we show that \(u^{\mathcal{G}}\colon M_G \to (0,1)^V\), and \(w^{\mathcal{G}}\colon (0,\infty)^V\to (0,\infty)^V\), and that they are diffeomorphisms.
\begin{Remark}
	\label{rmk-other-def-u-w}
	 Let  $\mathcal G$ be a  moral DAG with skeleton $G=(V,E)$. Then for any $i\in V$, we have $\mathfrak{nb}_{G^*_{\overline{\mathfrak{de}}(i)}}(i)=\mathfrak{de}(i)\setminus\mathfrak{ch}(i)$. Consequently, by \eqref{eq-Delta_GV-v} with $G=G_{\overline{\mathfrak{de}}(i)}$, we get  
	\[u^{\mathcal{G}}_i=x_i \frac{\Delta_{\mathfrak{de}(i)\setminus \mathfrak{ch}(i)}(x)}{\Delta_{\mathfrak{de}(i)}(x)}\qquad \text{ and }\qquad w^{\mathcal{G}}_i=y_i \frac{\delta_{\mathfrak{de}(i)\setminus \mathfrak{ch}(i)}(y)}{\delta_{\mathfrak{de}(i)}(y)}.\]
\end{Remark}

\begin{Lemma}	\label{lem-clique-poly-new-coord}
	Let \(\mathcal{G}\) be a moral DAG with skeleton \(G=(V,E)\).  Let $u^{\mathcal{G}}$ and $w^{\mathcal{G}}$ be the functions introduced in Definition \ref{def-factorizing-coordinates}. Then
	\begin{enumerate}[label={\arabic*)}, itemjoin={,}, itemjoin*={, and}]
		\item \begin{equation}
			\label{eq-deltaG}
			\begin{aligned}
				\Delta_G=\prod_{i\in V} (1-u^{\mathcal{G}}_i)\qquad \text{ and }\qquad \delta_G=\prod_{i\in V}(1+w^{\mathcal{G}}_i);
			\end{aligned}
		\end{equation}
		\item  \(u^{\mathcal{G}}:M_G\to (0,1)^V\) is a diffeomorphism, and its inverse map is given by \(x^{\mathcal{G}}: (0,1)^V\to M_G\), where
		\begin{equation}
			\label{eq-x-in-u}
			\begin{aligned}
				x^{\mathcal{G}}_i(u)=u_i \prod_{j\in \mathfrak{ch}(i)}(1-u_j),\ i\in V;
			\end{aligned}
		\end{equation}
		\item \(w^{\mathcal{G}}\colon (0,\infty)^V\to (0,\infty)^V\) is a diffeomorphism, and its inverse map is given by \(y^{\mathcal{G}}\colon (0,\infty)^V\to (0,\infty)^V\), where
		\begin{equation}
			\label{eq-y-in-w}
			\begin{aligned}
				y^{\mathcal{G}}_i(w)=w_i \prod_{j\in \mathfrak{ch}(i)}(1+w_j),\ i\in V;
			\end{aligned}
		\end{equation}
		\item in particular,
		\begin{equation}
			\label{eq-ui-as-xu}
			\begin{aligned}
				u_i^{\mathcal{G}}=\frac{x_i}{\prod_{j\in \mathfrak{ch}(i)} (1-u_j^{\mathcal{G}})}\qquad \text{ and } \qquad  w_i^{\mathcal{G}}=\frac{y_i}{\prod_{j\in \mathfrak{ch}(i)}(1+w_j^{\mathcal{G}})},\quad i\in V.
			\end{aligned}
		\end{equation}
	\end{enumerate}
\end{Lemma}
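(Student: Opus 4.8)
\emph{Proof plan.} I would prove the statements about $\Delta_G$ and $u^{\mathcal G}$ (the first identity in 1), the claims about $u^{\mathcal G}$ in 2 and 4) by induction on $|V|$, and then obtain the $\delta_G$, $w^{\mathcal G}$ statements (part 3, the second identities in 1 and 4) by running the \emph{verbatim} argument with $\Delta$ replaced by $\delta$, $M_G$ replaced by $(0,\infty)^V$, and \eqref{eq:Deltax} by its $\delta$-counterpart; that side is in fact easier, since $\delta_A(y)=\sum_{C\in\mathcal C_{(G_A)^*}}\pi_C(y)>0$ automatically for $y\in(0,\infty)^V$, so no positivity constraints arise and the monotonicity input below is not needed. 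The argument rests on three facts: (i) a sink $i_0$ of a moral DAG $\mathcal G$ is simplicial in its skeleton $G$, because $\mathfrak{nb}_G(i_0)=\mathfrak{pa}(i_0)$ and any two parents of $i_0$ are adjacent by morality; (ii) the simplicial-vertex recursion \eqref{eq:Deltax}, applied not only to $G$ but to every induced subgraph $G_A$ with $i_0\in A$ (there $\Delta_A(x)=(1-x_{i_0})\Delta_{A\setminus\{i_0\}}(\tilde x)$, with $\tilde x$ as in \eqref{eq:Deltax}); and (iii) for a \emph{downward-closed} $B$ (meaning $j\to k$, $j\in B$ $\Rightarrow$ $k\in B$) the induced subdigraph $\mathcal G_B$ is again a moral DAG with decomposable skeleton $G_B$ and $\mathfrak{de}_{\mathcal G_B}(j)=\mathfrak{de}_{\mathcal G}(j)$, $\overline{\mathfrak{de}}_{\mathcal G_B}(j)=\overline{\mathfrak{de}}_{\mathcal G}(j)$, whence $u^{\mathcal G_B}_j(x_B)=u^{\mathcal G}_j(x)$ for $j\in B$; note also that $\overline{\mathfrak{de}}(i)$ and $\mathfrak{de}(i)$ are downward-closed and, using morality, so is $\mathfrak{de}(i)\setminus\mathfrak{ch}(i)$ (every intermediate vertex of a directed path ending at a child of $i$ is a child of $i$).

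For part 1, pick a sink $i_0$ of $\mathcal G$, put $\tilde V=V\setminus\{i_0\}$, $\tilde{\mathcal G}=\mathcal G_{\tilde V}$, $\tilde G=G_{\tilde V}$, fix $x\in M_G$ and let $\tilde x$ be as in \eqref{eq:Deltax}. Since $\mathfrak{de}(i_0)=\emptyset$ we get $u^{\mathcal G}_{i_0}(x)=1-\Delta_{\{i_0\}}(x)/\Delta_\emptyset(x)=x_{i_0}$. Applying (ii) to $A\cup\{i_0\}$ for every $A\subseteq\tilde V$ gives $\Delta_{A\cup\{i_0\}}(x)=(1-x_{i_0})\Delta_A(\tilde x)$, which in particular yields $\tilde x\in M_{\tilde G}$. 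For $i\in\tilde V$: if $i$ is an ancestor of $i_0$, then (as $i_0$ is a sink) $\mathfrak{de}_{\mathcal G}(i)=\mathfrak{de}_{\tilde{\mathcal G}}(i)\cup\{i_0\}$ and $\overline{\mathfrak{de}}_{\mathcal G}(i)=\overline{\mathfrak{de}}_{\tilde{\mathcal G}}(i)\cup\{i_0\}$, so the factors $1-x_{i_0}$ cancel and $1-u^{\mathcal G}_i(x)=\Delta_{\overline{\mathfrak{de}}_{\mathcal G}(i)}(x)/\Delta_{\mathfrak{de}_{\mathcal G}(i)}(x)=\Delta_{\overline{\mathfrak{de}}_{\tilde{\mathcal G}}(i)}(\tilde x)/\Delta_{\mathfrak{de}_{\tilde{\mathcal G}}(i)}(\tilde x)=1-u^{\tilde{\mathcal G}}_i(\tilde x)$; if $i$ is not an ancestor of $i_0$, then $\overline{\mathfrak{de}}_{\mathcal G}(i)=\overline{\mathfrak{de}}_{\tilde{\mathcal G}}(i)$ is disjoint from $\mathfrak{nb}_G(i_0)\cup\{i_0\}$, so $\Delta$ of the relevant sets is unaffected by the rescaling and again $1-u^{\mathcal G}_i(x)=1-u^{\tilde{\mathcal G}}_i(\tilde x)$. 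Multiplying over $i\in V$, then using the induction hypothesis for $\tilde{\mathcal G}$ and \eqref{eq:Deltax}: $\prod_{i\in V}(1-u^{\mathcal G}_i(x))=(1-x_{i_0})\prod_{i\in\tilde V}(1-u^{\tilde{\mathcal G}}_i(\tilde x))=(1-x_{i_0})\Delta_{\tilde G}(\tilde x)=\Delta_G(x)$.

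Applying part 1 to $\mathcal G_B$ for downward-closed $B$ (using (iii) and $x_B\in M_{G_B}$) gives the localized identity $\Delta_B(x)=\prod_{i\in B}(1-u^{\mathcal G}_i(x))$ on $M_G$. Since $\mathfrak{de}(i)$ and $\mathfrak{de}(i)\setminus\mathfrak{ch}(i)$ are downward-closed, dividing the localized identities for them gives $\prod_{j\in\mathfrak{ch}(i)}(1-u^{\mathcal G}_j)=\Delta_{\mathfrak{de}(i)}(x)/\Delta_{\mathfrak{de}(i)\setminus\mathfrak{ch}(i)}(x)$, which together with Remark \ref{rmk-other-def-u-w} yields part 4, $u^{\mathcal G}_i=x_i/\prod_{j\in\mathfrak{ch}(i)}(1-u^{\mathcal G}_j)$; rearranged, $x=x^{\mathcal G}(u^{\mathcal G}(x))$ for every $x\in M_G$. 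Moreover $u^{\mathcal G}_i=x_i\,\Delta_{\mathfrak{de}(i)\setminus\mathfrak{ch}(i)}(x)/\Delta_{\mathfrak{de}(i)}(x)$ lies in $(0,1)$ on $M_G$, so $u^{\mathcal G}\colon M_G\to(0,1)^V$, and it is injective (having $x^{\mathcal G}$ as a left inverse).

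It remains, for part 2, to check $x^{\mathcal G}\big((0,1)^V\big)\subseteq M_G$ and $u^{\mathcal G}\circ x^{\mathcal G}=\mathrm{id}_{(0,1)^V}$. A second induction on $|V|$, peeling a sink $v_1$ of $\mathcal G$, does both: one first checks $(x^{\mathcal G}(u))_{v_1}=u_{v_1}$, and for $i\in\tilde V$ that $(x^{\mathcal G}(u))_i$ equals $(x^{\mathcal G_{\tilde V}}(u_{\tilde V}))_i$ if $i\notin\mathfrak{nb}_G(v_1)$ and $(1-u_{v_1})(x^{\mathcal G_{\tilde V}}(u_{\tilde V}))_i$ if $i\in\mathfrak{nb}_G(v_1)$. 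For $x^{\mathcal G}(u)\in M_G$: if $A\not\ni v_1$, pass from $x^{\mathcal G_{\tilde V}}(u_{\tilde V})\in M_{G_{\tilde V}}$ (induction) to $(x^{\mathcal G}(u))_{\tilde V}$ by applying, coordinatewise on $\mathfrak{nb}_G(v_1)$, the monotonicity principle that decreasing a single coordinate keeps a point in $M_H$ — immediate from multilinearity via \eqref{eq-Delta_GV-v} — so $\Delta_A(x^{\mathcal G}(u))>0$; if $A\ni v_1$, then $v_1$ is simplicial in $G_A$, the rescaled vector of \eqref{eq:Deltax} restricted to $A\setminus\{v_1\}$ equals $x^{\mathcal G_{\tilde V}}(u_{\tilde V})$ there, and the induction hypothesis gives $\Delta_A(x^{\mathcal G}(u))=(1-u_{v_1})\Delta_{A\setminus\{v_1\}}(x^{\mathcal G_{\tilde V}}(u_{\tilde V}))>0$. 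A twin induction gives $\Delta_B(x^{\mathcal G}(u))=\prod_{i\in B}(1-u_i)$ for every downward-closed $B$, and taking $B=\overline{\mathfrak{de}}(i)$ and $B=\mathfrak{de}(i)$ yields $u^{\mathcal G}_i(x^{\mathcal G}(u))=1-(1-u_i)=u_i$. Since $u^{\mathcal G}$ and $x^{\mathcal G}$ are rational maps with nowhere-vanishing denominators, both are smooth; being mutually inverse, they are diffeomorphisms, and likewise for $w^{\mathcal G},y^{\mathcal G}$. I expect the main obstacle to be the bookkeeping of how $\mathfrak{de}(\cdot)$, $\overline{\mathfrak{de}}(\cdot)$ and the factorizing coordinates transform when a sink is deleted, and — the genuinely delicate point — proving $x^{\mathcal G}(u)\in M_G$, which requires controlling $\Delta_A$ on \emph{all} subsets $A$, not merely downward-closed ones; that is exactly where the multilinearity/monotonicity input and the case split via \eqref{eq:Deltax} enter.
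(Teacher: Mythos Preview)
Your proposal is correct, and the overall architecture (induction on $|V|$, then checking $x^{\mathcal G}((0,1)^V)\subset M_G$ separately) matches the paper, but the implementation differs in an interesting way.

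The paper peels a \emph{source} $i_0$ for part 1. Since $u^{\mathcal G}_i$ depends only on $x_{\overline{\mathfrak{de}}(i)}$, removing a source leaves all remaining $u^{\mathcal G}_i$ literally unchanged, and $1-u^{\mathcal G}_{i_0}=\Delta_G/\Delta_{G\setminus\{i_0\}}$ gives the telescoping product at once; no rescaling $\tilde x$ and no case split ``ancestor of $i_0$ vs.\ not'' is needed. Your sink-based proof of part 1 is correct but is doing more work, importing \eqref{eq:Deltax} and tracking how $u^{\mathcal G}_i$ transforms under $x\mapsto\tilde x$. On the other hand, your derivation of part 4 via the ``localized'' identity $\Delta_B(x)=\prod_{i\in B}(1-u^{\mathcal G}_i)$ on downward-closed $B$ (plus the observation that $\mathfrak{de}(i)\setminus\mathfrak{ch}(i)$ is downward-closed by morality) is cleaner than the paper's route, which first re-proves \eqref{eq-x-in-u} by another source-peeling argument and only then reads off part 4. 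Finally, for the delicate inclusion $x^{\mathcal G}((0,1)^V)\subset M_G$, the paper sets an arbitrary coordinate $x_{i_0}$ to $0$, writes $x_{V\setminus\{i_0\}}=x^{\mathcal G'}(\widehat u)$ for a recursively constructed $\widehat u$, and argues $\widehat u\in(0,1)^{V'}$ by propagating along $\overline{\mathfrak{an}}(i_0)$; your alternative---the monotonicity principle ``decreasing one coordinate keeps a point in $M_H$'' (immediate from \eqref{eq-Delta_GV-v}) combined with the sink-recursion via \eqref{eq:Deltax}---is arguably more transparent and handles both cases $v_1\in A$ and $v_1\notin A$ uniformly.
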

\begin{expl}	\label{expl-1-2-3}
	Consider the chain \(G=1-2-3\). There are three moral DAGs with skeleton \(G\), namely
	\[\mathcal{G}_1 = 1\rightarrow2\rightarrow3,\quad 	\mathcal{G}_2 = 1\leftarrow2\leftarrow3,\quad \mathcal{G}_3=1\leftarrow 2\rightarrow 3.\]
	The corresponding factorizing coordinates, defined in \eqref{eq-u-calG},  are
	\begin{align*}
		(u^{\mathcal{G}_1}_1,u^{\mathcal{G}_1}_2,u^{\mathcal{G}_1}_3)&=  \left(\frac{x_1(1-x_3)}{1-x_2-x_3}, \frac{x_2}{1-x_3},x_3\right), \\
		(u^{\mathcal{G}_2}_1,u^{\mathcal{G}_2}_2,u^{\mathcal{G}_2}_3) &= \left(x_1, \frac{x_2}{1-x_1}, \frac{x_3(1-x_1)}{1-x_1-x_2} \right),\\
		(u^{\mathcal{G}_3}_1,u^{\mathcal{G}_3}_2,u^{\mathcal{G}_3}_3) & = \left(x_1, \frac{x_2}{(1-x_1)(1-x_3)}, x_3\right).
	\end{align*}
	We have
	\begin{align*}
		\Delta_G(x_1,x_2,x_3)  & = 1-x_1-x_2-x_3+x_1x_3  \\
		& = \left(1-\frac{x_1(1-x_3)}{1-x_2-x_3}\right)\left(1-\frac{x_2}{1-x_3}\right)(1-x_3)=\prod_{i=1}^3\,(1-u_i^{\mathcal G_1}) \\
		& = (1-x_1)\left(1-\frac{x_2}{1-x_1}\right)\left(1-\frac{x_3(1-x_1)}{1-x_1-x_2}\right)=\prod_{i=1}^3\,(1-u_i^{\mathcal G_2})\\
		& = 
		(1-x_1)\left(1-\frac{x_2}{(1-x_1)(1-x_3)}\right)(1-x_3)=\prod_{i=1}^3\,(1-u_i^{\mathcal G_3}).
	\end{align*}
\end{expl}

\subsection{Graph multinomial coefficients}\label{sec:coefficients}
For \(\alpha\in \mathbb{R}^V\) and \(A \subset  V\), we denote
\begin{equation*}
	\begin{aligned}
		|\alpha_A|=\sum_{i\in A}\alpha_i.
	\end{aligned}
\end{equation*}
If $A=V$, then we will sometimes drop the subscript $V$ and write $|\alpha|=\sum_{i\in V} \alpha_i$. 
\begin{Definition}\label{Ddel}
	Assume that $G$ is a finite simple graph.
    \begin{enumerate}[label={\arabic*)}, itemjoin={,}, itemjoin*={, and}]
    \item For any $r\in\mathbb{N}_+=\{1,2,\ldots\}$, we define the graph-multinomial coefficients  of the first type $\left\{\gbinom{r}{n}{\,\,G}\colon  n\in \mathbb{N}^V \right\}$ by 
	\[	\delta_G(y) ^r= \sum_{n \in\mathbb{N}^V} \gbinom{r}{n}{G} y^n,	\]
	where $y^n = \prod_{i\in V} y_i^{n_i}$. 
	\item For any real \(r>0\), we define the graph-multinomial coefficients of the second type $\left\{\gbinomm{|n|+r-1}{n}{G}\colon n\in \mathbb{N}^V \right\}$ by the formal power series expansion
	\[	\Delta_G(x)^{-r}= \sum_{n \in\mathbb{N}^V} \gbinomm{|n|+r-1}{n}{G} x^n.	\]
    \end{enumerate}
\end{Definition}
As we only consider finite graph, \cite{Dobrushin_1996} readily implies that \(\log \Delta_G(x)\) is absolutely convergent for \(\Vert x\Vert_\infty\) small enough, and thus, the above expansion is well defined. The same expansions were considered in \cite{Horn1,Horn2}, where the authors characterized  decomposable graphs by the inverse of their independence polynomials being Horn hypergeometric series. 

We note that when \(G\) is a complete graph, the above definitions are consistent with the definitions of the multinomial coefficients and the negative multinomial coefficients. For general \(G\), note also that $\gbinom{-r}{n}{\,G}=(-1)^{|n|} \gbinomm{|n|+r-1}{n}{G}$ for $r>0$.

In Section \ref{sec:org7b431d8}, we will see that the coefficients \(\gbinom{1}{n}{\,\,G}\) and \(\gbinomm{|n|}{n}{G}\) (see \eqref{eq:nn}) naturally arise in some combinatorial schemes for any graph $G$, however, our interest is mostly in the decomposable case. 

Since $\delta_G(y)^r$ is a multivariate polynomial for any $r\in \mathbb{N}_+$, the support of the graph-multinomial coefficients of the first type is a finite set. The following result describes their support precisely. 
\begin{Lemma}\label{lem:NGr}
Let $G$ be a finite simple graph and $r\in\mathbb{N}_+$. Then, $\gbinom{r}{n}{\,G}\neq 0$ if and only if $n\in \mathbb{N}_{G,r}$, where 
	\begin{align}\label{ngr}
		\mathbb{N}_{G,r}:=\{n\in\mathbb{N}^V\colon \max_{C\in\mathcal{C}_G^+} |n_C|\leq r\}.
	\end{align} 
\end{Lemma}
By comparing the coefficients of $\delta_G(x)^{r_1+r_2}$ and $\delta_G(x)^{r_1} \delta_G(x)^{r_2}$ (and similarly for $\Delta_G$), we obtain the following result.
\begin{Corollary}\label{cor:sums}
Let $r_i\in \mathbb{N}_+$ and $s_i>0$ for $i=1,2$. Then, 
\begin{multline*}
\gbinom{r_1+r_2}{n}{G}	 = \sum_{\substack{k\in\mathbb{N}_{G,r_1}\\ n-k\in\mathbb{N}_{G,r_2}}} \gbinom{r_1}{k}{G}\gbinom{r_2}{n-k}{G}\quad
\mbox{and}\\ \gbinomm{|n|+s_1+s_2-1}{n}{G}	 = \sum_{\substack{k\in\mathbb{N}^V\\n-k\in\mathbb{N}^V}} \gbinomm{|k|+s_1-1}{k}{G}\gbinomm{|n-k|+s_2-1}{n-k}{G}.
\end{multline*}
\end{Corollary}
Moreover, by Lemma \ref{lem-properties-of-clique-polynomials} 1), we obtain
\[
\gbinom{r}{n}{G}\Big|_{n_{A^c}=0}=\gbinom{r}{n_A}{G_A}\quad\mbox{and}\quad \gbinomm{|n|+r-1}{n}{G}\Big|_{n_{A^c}=0}=\gbinomm{|n_A|+r-1}{n_A}{G_A}.
\]
In the binomial coefficient $\binom{a}{b}$, we allow $a$ to be non-integer. In this case, we have 
\begin{align*}
	\binom{a}{b} = \frac{\Gamma(a+1)}{b!\,\Gamma(a-b+1)}. 
\end{align*}
We adopt the same convention for multinomial coefficients.
\begin{Lemma}\label{lem:constant-recustion}
Let $r\in \mathbb{N}_+$ and $s>0$.  If $i_0\in V$ is simplicial in $G=(V,E)$, then for $n\in\mathbb{N}^V$,
	\begin{multline*}
	\gbinom{r}{n}{G} = \gbinom{r}{n_{\tilde V}}{\tilde G} \binom{r-|n_{\mathfrak{nb}_G(i_0)}|}{n_{i_0}}\quad\mbox{ and }\\	\gbinomm{|n|+s-1}{n}{G} = \gbinomm{|n_{\tilde{V}}|+s-1}{n_{\tilde V}}{\tilde G} \binom{n_{i_0}+|n_{\mathfrak{nb}_G(i_0)}|+s-1}{n_{i_0}},
	\end{multline*}
	where $\tilde{V}=V\setminus\{i_0\}$ and $\tilde{G} = G_{\tilde{V}}$. 
\end{Lemma}

The above lemma allows us to find closed-form formulas for graph-multinomial coefficients for any decomposable graph. 
\begin{Lemma}\label{lem:explicit-constants}
Assume that $G=(V,E)$ is decomposable. Let $r\in \mathbb{N}_+$ and $s>0$. 
\begin{enumerate}[label={\arabic*)}, itemjoin={,}, itemjoin*={, and}]
\item 	Then, for $n\in \mathbb{N}^V$, 
	\begin{align*}
\gbinom{r}{n}{G} &= \frac{\prod_{C\in \mathcal{C}_G^+}\binom{r}{ n_C}}{\prod_{S\in \mathcal{S}_G^-}\binom{r}{n_S}^{\nu_S}}\qquad\mbox{ and }\qquad 
	\gbinomm{|n|+s-1}{n}{G} = \frac{\prod_{C\in \mathcal{C}_G^+} \binom{|n_C|+s-1}{n_C}}{\prod_{S\in \mathcal{S}_G^-}\binom{|n_S|+s-1}{n_S}^{\nu_S}}.
	\end{align*}
\item Let $\mathcal{G}$, with a parent function $\mathfrak{pa}$,  be a moral DAG with a skeleton $G$. Then,
	\begin{align*}
	\gbinom{r}{n}{G} = \prod_{i\in V} \binom{r-|n_{\mathfrak{pa}(i)}| }{n_i}\qquad\mbox{and}\qquad 	\gbinomm{|n|+s-1}{n}{G} = \prod_{i\in V} \binom{n_i+|n_{\mathfrak{pa}(i)}|  +s-1}{n_i}.
	\end{align*}
    \end{enumerate}
\end{Lemma}
By the above result, we see that the graph-multinomial coefficients are always nonnegative. 

\begin{Remark}
Define the ascending/descending Pochhammer symbols by
$$
(a)^{(k)}=a(a+1)\ldots(a+k-1),\quad (a)_{(k)}:=a(a-1)\ldots (a-k+1),\quad k\ge 1,\quad (a)^{(0)}=(a)_{(0)}=1.
$$
One can prove that 
	\begin{align*}
		\gbinom{r}{n}{G} = \frac{ \prod_{C\in \mathcal{C}_G^+}(r)_{\left(|n_C|\right)}}{n!\,\prod_{S\in \mathcal{S}_G^-}\left[(r)_{\left(|n_S|\right)}\right]^{\nu_S}}\qquad\mbox{and}\qquad 
		\gbinomm{|n|+s-1}{n}{G} = \frac{\prod_{C\in \mathcal{C}_G^+}\, (s)^{(|n_C|)}}{n!\, \prod_{S\in \mathcal{S}_G^-}\left[(s)^{(|n_S|)}\right]^{\nu_S}},
	\end{align*}
	where $r\in \mathbb{N}_+$, $s>0$ and $n!=\prod_{i\in V}\,n_i!$. 
\end{Remark}

\begin{Remark}\label{rem:Cd}
If $G$ is non-decomposable, the graph-multinomial coefficients can still be obtained, though they generally do not have compact formulas.

In \cite{Horn2}, the authors interpreted Reed's results \cite{Reed} on chromatic polynomials in terms of independence polynomials. 

Graph-multinomial coefficients for cycle graphs were considered in \cite{Horn2}: let $C_d$ be the cycle graph of length $d\geq 3$. For $r<0$, define
\[
\gbinom{r}{n}{C_d} = (-1)^{|n|}\gbinomm{|n|+r-1}{n}{C_d}. 
\]
Then, \cite[Theorem 4]{Horn2}, for $r\in \mathbb{N}\cup(-\infty,0)$, gives
\begin{align*}
\gbinom{r}{n}{C_d} = \frac{1}{n!}  \left(
\sum_{k=0}^d \, (-1)^{kd}
v_k(r) \prod_{i=1}^d \frac{(n_i)_{(k)}}{(r)_{(n_i+k)}}
\right)\, \prod_{i=1}^d (r)_{(n_i+n_{i+1})}
\end{align*}
for $n\in\mathbb{N}^V$, where $n_{d+1} = n_1$, $n! = \prod_{i=1}^n n_i!$, and $v_k(r) = \binom{r}{k}-\binom{r}{k-1}$ with the convention that for $r<0$, $\binom{r}{k}=(-1)^k \binom{-r+k-1}{k}$, $\binom{a}{-1}=0$. 
In particular, if $r=-1$, then for $n\in\mathbb{N}^V$, we obtain
\begin{align*}
\gbinomm{|n|}{n}{C_d} = \frac{1}{n!} \left(
\sum_{k=0}^d \,v_k(-1) \prod_{i=1}^d \frac{(n_i)_{(k)}}{(n_i+k)!}
\right)\,\prod_{i=1}^d (n_i+n_{i+1})! ,
\end{align*}
where $v_k(-1) = 2(-1)^k$ for $k>0$ and $v_0(-1)=1$. Although not immediately apparent from this formulation, the above formula simplifies to $\binom{n_1 + n_2 + n_3}{n_1, n_2, n_3}$ for $d = 3$, which corresponds to the complete (and hence decomposable) graph on 3 vertices.
\end{Remark}

\begin{Lemma}\label{lem:MGseries}
	Assume that $G$ is decomposable. For real $r>0$ and $x\in(0,\infty)^V$, the series  
	\begin{align}\label{eq:DeltaG}
        \sum_{n \in\mathbb{N}^V} \gbinomm{|n|+r-1}{n}{G} x^n
	\end{align}
	converges if and only if $x\in M_G$. In particular, if $x\in (0,\infty)^V\setminus M_G$, the series diverges. 
\end{Lemma}

\section{Negative multinomial and multinomial graphical models}
\label{sec:orgce3b2b0}

We define two discrete distributions arising from the multivariate power series expansions of $\delta_G^r$ and $\Delta_G^{-r}$. In Section \ref{sec:char-markov}, we show that these distributions naturally arise as the unique families of distributions with $G$-Markov properties having multinomial or negative multinomial clique marginals. If the graph $G$ is non-decomposable, this property no longer holds, although our formulas below still define valid probability distributions. Specifically, as shown in Remark \ref{rem:non-dec}, if $G=C_4$ is the cycle of length $4$, the $G$-multinomial distribution $\operatorname{mult}_G(r,x)$ is $G$-Markov when $r=1$, but it is not $G$-Markov when $r=2$. For these reasons, we restrict our definitions to decomposable graphs. 

\begin{Definition}	\label{def-graph-multinomial-and-negative-multinomial-distributions}
	Let \(G=(V,E)\) be a finite decomposable graph.
	\begin{enumerate}[label={\arabic*)}, itemjoin={,}, itemjoin*={, and}]
		\item A random vector \(N=(N_i)_{i\in V}\) has the \(G\)-negative multinomial distribution with parameters \(r>0\)  and \(x\in M_G\) if
		\begin{equation}
			\label{eq-PNn}
			\begin{aligned}
				\mathbb{P}(N=n)=\gbinomm{|n|+r-1}{n}{G}\,x^n\,\Delta^r_G(x),\quad n\in \mathbb{N}^V,
			\end{aligned}
		\end{equation}
		We write \(N\sim \operatorname{nm}_G(r,x)\). 
		\item A random vector \(N=(N_i)_{i\in V}\) has the \(G\)-multinomial distribution with parameters  \(r\in \mathbb{N}_+\), \(y\in (0,\infty)^V\), if 
		\begin{equation}
			\label{eq-PNn2}
			\begin{aligned}
				\mathbb{P}(N=n)=\gbinom{r}{n}{G}\,y^n\, \delta_G(y)^{-r},\quad n\in \mathbb N_{G,r}.
			\end{aligned}
		\end{equation}
We write \(N\sim \operatorname{mult}_G(r,y)\).
	\end{enumerate}
\end{Definition}
Since the graph-multinomial coefficients are non-negative and by Lemma \ref{lem:MGseries},  the above formulas define discrete probability distributions. 
\begin{Remark}
	Let us single out two special cases with $r=1$, being, respectively, graphical versions of
	\begin{itemize} \item the geometric law, $\operatorname{nm}_G(1,x)$:
		$$
		\mathbb P(N=n)=\gbinomm{|n|}{n}{G} x^n\,\Delta_G(x),\quad n\in\mathbb N^V,
		$$
		\item the Bernoulli law, $\operatorname{mult}_G(1,y)$:
		$$
		\mathbb P(N=n)=\tfrac{y^n}{\delta_G(y)},\quad \max_{c\in\mathcal C_G^+}\,|n_C|\le 1.
		$$
	\end{itemize}
\end{Remark}

\begin{expl}[Basic examples] $\,$
	\begin{itemize} \item Let \(G\) be a complete graph with vertex set \(V\), i.e., \(G=K_V\).
		\begin{enumerate}[label={\arabic*)}, itemjoin={,}, itemjoin*={, and}]
			\item If \(N\sim \operatorname{nm}_G(r,x)\) with  $r>0$ and  \(x\in M_G=\left\{x\in (0,\infty)^V\colon |x_V| < 1\right\}\), then
			\begin{equation}
				\label{eq-cnm}
				\begin{aligned}
					\mathbb{P}(N=n)=\binom{|n|+r-1 }{ n} (1-|x_V|)^r x^n,\quad n\in \mathbb{N}^V.
				\end{aligned}
			\end{equation}
			This is the negative multinomial distribution; denoted by \(\operatorname{nm}(r,x)\) or \(\operatorname{nm}_{K_V}(r,x)\).
			\item Similarly, if \(N\sim \operatorname{mult}_G(r,y)\) with $r\in\mathbb{N}_+$ and \(y\in (0,\infty)^V\), then
			\begin{equation}\label{multcl}\mathbb{P}(N=n)=\binom{r}{n} \left(\frac{1}{1+|y_V|}\right)^{r-|n|}\prod_{i\in V} \left( \frac{y_i}{1+|y_V|} \right)^{n_i},\quad n\in \mathbb{N}^V \mbox{ such that  }|n_V| \le r.\end{equation}
			This is the multinomial distribution with \(r\) trials and event probabilities \(p_i=y_i/(1+|y_V|)\) for \(i\in V\) and \(p_0=(1+|y_V|)^{-1}\); we denote it by \(\operatorname{mult}(r,y)\) or \(\operatorname{mult}_{K_V}(r,y)\).
		\end{enumerate}
		\item Let \(G\) be a completely disconnected graph, i.e. \(G=K_V^*\).
		\begin{enumerate}[label={\arabic*)}, itemjoin={,}, itemjoin*={, and}]
			\item If \(N\sim \operatorname{nm}_G(r,x)\) with $r>0$ and \(x\in M_G=(0,1)^V\), then
			\begin{equation}
				\label{eq-cnm2}
				\begin{aligned}
					\mathbb{P}(N=n)=\prod_{i\in V}\,\binom{r+n_i-1}{ n_i} x_i^{n_i} (1-x_i)^r,\quad\ n\in \mathbb{N}^V,
				\end{aligned}
			\end{equation}
			i.e., \(N\) is a random vector of independent univariate negative binomial coordinates.
			\item Similarly, if \(N\sim \operatorname{mult}_G(r,y)\) with $r\in \mathbb N_+$ and \(y\in (0,\infty)^V\), then
			\begin{equation}\label{multcl2}\mathbb{P}(N=n)=\prod_{i\in V}\,\binom{r}{n_i}\,p_i^{n_i}\,(1-p_i)^{r-n_i},\quad n\in\{0,1,\ldots,r\}^V,\end{equation}
			where $p_i=y_i/(1+y_i)$, $i\in V$, i.e., \(N\) is a random vector of independent univariate  binomial coordinates.
		\end{enumerate}
	\end{itemize}
\end{expl}
\begin{expl}\label{ex-1-2-3} 	\label{expl-basic-examples} Let \(G=(V,E)\) be the graph with \(V=\{1,2,3\}\) and \(E=\{\{1,2\},\{2,3\}\}\).
	\begin{enumerate}[label={\arabic*)}, itemjoin={,}, itemjoin*={, and}]
		\item If \(N\sim \operatorname{nm}_G(r,x)\) with $r>0$ and   \(x\in M_G=\{(x_1,x_2,x_3)\in(0,1)^3 \colon  x_2 < (1-x_1)(1-x_3)\}\), then for $n\in\mathbb N^3$,
		\begin{equation}
			\label{eq-1-2-3-nmG}
			\begin{aligned}
				\mathbb{P}(N=n) = \frac{(r)^{(n_1+n_2)} \,(r)^{(n_2+n_3)}}{(r)^{(n_2)}\,n_1!\,n_2!\,n_3!}\,  x_1^{n_1}\,x_2^{n_2}\,x_3^{n_3}(1-x_1-x_2-x_3+x_1x_3)^r.
			\end{aligned}
		\end{equation}
		\item If \(N\sim \operatorname{mult}_G(r,y)\) with $r\in\mathbb N_+$  and \(y\in (0,\infty)^V\) then for \(n\in\mathbb N^3\) such that and \(\max(n_1+n_2,n_2+n_3)\leq r\),
		\begin{equation}
			\label{eq-1-2-3-multG}
			\begin{aligned}
				\mathbb{P}(N=n) = \frac{(r)_{(n_1+n_2)} (r)_{(n_2+n_3)}}{(r)_{(n_2)}\,n_1! \,n_2!\,n_3!}\,  y_1^{n_1}\,y_2^{n_2}\,y_3^{n_3}\,(1+y_1+y_2+y_3+y_1y_3)^{-r}.
			\end{aligned}
		\end{equation}
	\end{enumerate}
\end{expl}

\subsection{Basic properties}\label{sec:basic}
\begin{Proposition}[Moment generating functions] 	\label{prop-moment-generating-functions} Let $G=(V,E)$ be a decomposable graph.
	\begin{enumerate}[label={\arabic*)}, itemjoin={,}, itemjoin*={, and}] 
		\item Let \(N\sim \operatorname{nm}_G(r,x)\). Its moment generating function \(L_N\) is
		\[L_N(\theta):=\mathbb{E}\left( e^{\left< \theta,N \right>} \right)=\left( \frac{\Delta_G(x)}{\Delta_G(xe^{\theta})} \right)^r,\quad \theta\in \Theta,\]
		where \(\Theta \subset \mathbb{R}^V\) contains a neighborhood of \(0\in \mathbb{R}^V\), \(xe^{\theta}=(x_i e^{\theta_i})_{i\in V}\) and \(\left< \theta,N \right>=\sum_{i\in V}\theta_i N_i\).
		\item Let \(N\sim \operatorname{mult}_G(r,y)\). Its moment generating function \(L_N\) is
		\[L_N(\theta)=\left( \frac{\delta_G(ye^{\theta})}{\delta_G(y)} \right)^r,\quad \theta\in \mathbb{R}^V,\]
		where \(y e^{\theta}=(y_ie^{\theta_i})_{i\in V}\).
	\end{enumerate}
\end{Proposition}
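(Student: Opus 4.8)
The plan is to read off both moment generating functions directly from the power series representations \eqref{eq-invDelta} that define the coefficients $C_G(\cdot,r)$ and $c_G(\cdot,r)$; the computation is a one-line rearrangement in each case, and the only point that needs care is the domain of validity in the negative multinomial case.

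For $N\sim\operatorname{nm}_G(r,x)$ with $x\in M_G$, I would take $\Theta=\{\theta\in\mathbb R^V:\ xe^\theta\in M_G\}$, where $xe^\theta=(x_ie^{\theta_i})_{i\in V}$. Since $M_G$ is open (each $\Delta_A$ is a polynomial, hence $M_G$ is a finite intersection of open sets, cf.\ Definition \ref{def-support-of-graph-dirichlet}) and $\theta\mapsto xe^\theta$ is continuous with value $x\in M_G$ at $\theta=0$, the set $\Theta$ is open and contains a neighborhood of $0$. For $\theta\in\Theta$ the law $\operatorname{nm}_G(r,xe^\theta)$ is a genuine probability distribution (the fact recorded after Definition \ref{def-graph-multinomial-and-negative-multinomial-distributions}), so summing \eqref{eq-PNn} with parameter $xe^\theta$ over $\mathbb N^V$ and dividing by $\Delta_G(xe^\theta)^r>0$ yields $\sum_{n\in\mathbb N^V}C_G(n,r)(xe^\theta)^n=\Delta_G(xe^\theta)^{-r}$; in particular this nonnegative series converges. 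Plugging \eqref{eq-PNn} into the definition of $L_N$ and using nonnegativity of all summands to rearrange freely,
\[
L_N(\theta)=\sum_{n\in\mathbb N^V}e^{\langle\theta,n\rangle}\,C_G(n,r)\,x^n\,\Delta_G(x)^r=\Delta_G(x)^r\sum_{n\in\mathbb N^V}C_G(n,r)\,(xe^\theta)^n=\left(\frac{\Delta_G(x)}{\Delta_G(xe^\theta)}\right)^r,
\]
which in particular shows $L_N(\theta)<\infty$ on $\Theta$.

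For $N\sim\operatorname{mult}_G(r,y)$ with $y\in(0,\infty)^V$, the support $\mathbb N_{G,r}$ from \eqref{ngr} is finite, so the second identity in \eqref{eq-invDelta} is an honest polynomial identity $\delta_G(z)^r=\sum_{n\in\mathbb N_{G,r}}c_G(n,r)\,z^n$ valid for all $z\in\mathbb R^V$. Inserting \eqref{eq-PNn2} and evaluating this identity at $z=ye^\theta$ gives, for every $\theta\in\mathbb R^V$,
\[
L_N(\theta)=\delta_G(y)^{-r}\sum_{n\in\mathbb N_{G,r}}c_G(n,r)\,(ye^\theta)^n=\left(\frac{\delta_G(ye^\theta)}{\delta_G(y)}\right)^r,
\]
with no convergence concerns since the sum is finite.

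The computation itself is routine; the one delicate point is justifying that the formal series for $\Delta_G^{-r}$ in \eqref{eq-invDelta} converges at $xe^\theta$, which I resolve by invoking the already-established fact that $\operatorname{nm}_G(r,\cdot)$ is a probability law on all of $M_G$, rather than reproving convergence. A self-contained alternative would be to first derive the formula on the subcube $\{z\in(0,\infty)^V:\sum_{i\in V}z_i<1\}\subset M_G$, where absolute convergence of the multinomial-type series is elementary, and then extend it to $\Theta$ by analytic continuation, both sides being analytic in $\theta$; but the probabilistic argument above is cleaner.
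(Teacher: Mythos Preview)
Your argument is correct and is precisely what the paper means by ``the proof is immediate'': both formulas follow at once from the power-series identities \eqref{eq-invDelta}, and you have simply spelled out the domain and convergence details that the paper leaves implicit. Your choice $\Theta=\{\theta:\ xe^\theta\in M_G\}$ and the observation that $M_G$ is open make the neighborhood claim transparent, and invoking the probability interpretation to guarantee convergence of the $C_G$ series is the natural way to close the loop.
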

The proof is immediate. The following convolution properties are straightforward consequences of Proposition \ref{prop-moment-generating-functions}. 
\begin{Corollary}
	\label{coro-sum-of-nm} Let $G=(V,E)$ be a decomposable graph.
	\begin{enumerate}[label={\arabic*)}, itemjoin={,}, itemjoin*={, and}]
		\item If \(N_1\sim \operatorname{nm}_G(r_1,x)\) and \(N_2\sim \operatorname{nm}_G(r_2,x)\) are independent, then
		\[N_1+N_2 \sim \operatorname{nm}_G\left(r_1+r_2,x\right).\]
		\item If \(N_1\sim \operatorname{mult}_G(r_1,\,y)\) and \(N_2\sim \operatorname{mult}_G(r_2,\,y)\) are independent, then
		\[N_1+N_2 \sim \operatorname{mult}_G\left(r_1+r_2,\,y\right).\]
	\end{enumerate}
\end{Corollary}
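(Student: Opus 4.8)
The plan is to deduce both statements directly from the moment generating function formulas in Proposition \ref{prop-moment-generating-functions}, using two elementary facts: the m.g.f.\ of a sum of independent random vectors is the product of the individual m.g.f.'s, and a law on $\mathbb{N}^V$ is uniquely determined by an m.g.f.\ that is finite on a neighborhood of the origin $0\in\mathbb{R}^V$.

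For part 1), on a common neighborhood $\Theta$ of $0$ on which both m.g.f.'s are finite I would compute
\[
L_{N_1+N_2}(\theta)=L_{N_1}(\theta)\,L_{N_2}(\theta)=\left(\frac{\Delta_G(x)}{\Delta_G(xe^{\theta})}\right)^{r_1}\left(\frac{\Delta_G(x)}{\Delta_G(xe^{\theta})}\right)^{r_2}=\left(\frac{\Delta_G(x)}{\Delta_G(xe^{\theta})}\right)^{r_1+r_2},
\]
which is exactly the m.g.f.\ of $\operatorname{nm}_G(r_1+r_2,x)$; since $r_1+r_2>0$, this is an admissible parameter value, so the claim follows from the uniqueness statement. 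Part 2) is identical with $\Delta_G(x)/\Delta_G(xe^{\theta})$ replaced by $\delta_G(ye^{\theta})/\delta_G(y)$ and $\Theta=\mathbb{R}^V$; the only extra remark is that $r_1,r_2\in\mathbb{N}_+$ forces $r_1+r_2\in\mathbb{N}_+$, so $\operatorname{mult}_G(r_1+r_2,y)$ is well defined.

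I do not expect any genuine obstacle here: the substantive work was already carried out in establishing the product forms of the m.g.f.'s, and the only points needing a word of care are that the resulting parameter lies in the admissible range (immediate) and that the law is recovered from its m.g.f. For completeness one could instead give a self-contained combinatorial proof by convolving \eqref{eq-PNn} (resp.\ \eqref{eq-PNn2}), summing over compositions $n=n^{(1)}+n^{(2)}$ and reducing the resulting sum, clique by clique, to Vandermonde-type identities such as $\sum_{k}\binom{r_1+k-1}{k}\binom{r_2+\ell-k-1}{\ell-k}=\binom{r_1+r_2+\ell-1}{\ell}$; but this forces extra bookkeeping over the minimal separators, so the m.g.f.\ argument is preferable.
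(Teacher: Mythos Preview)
Your proposal is correct and matches the paper's approach exactly: the paper simply states that this corollary is an immediate consequence of the moment generating function formulas in Proposition~\ref{prop-moment-generating-functions}, which is precisely the argument you give.
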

The marginals of \(\operatorname{nm}_G\) (resp. \(\operatorname{mult}_G\)) on cliques in \(G\) have negative multinomial (resp. multinomial) distribution.
\begin{Theorem}\label{thm:marginal}\ 
	\begin{enumerate}[label={\arabic*)}, itemjoin={,}, itemjoin*={, and}]
		\item If \(N\sim \operatorname{nm}_G(r,x)\) and \(C\in \mathcal{C}_G\), then \(N_C=(N_i)_{i\in C}\) has the distribution \(\operatorname{nm}_{K_C}(r,x^C)\), where
		\begin{equation}
			\label{eq-xC}
			\begin{aligned}
				x^C_i=x_i\, \frac{\Delta_{\mathfrak{nb}_{G^*}(i)}(x)}{\Delta_{V\setminus C}(x)},\ i\in C.
			\end{aligned}
		\end{equation}
		\item If \(N\sim \operatorname{mult}_G(r,y)\) and \(C\in \mathcal{C}_G\), then \(N_C=(N_i)_{i\in C}\) has the distribution \(\operatorname{mult}_{K_C}(r,y^C)\), where
		\begin{equation}
			\label{eq-yC}
			\begin{aligned}
				y^C_i=y_i\, \frac{\delta_{\mathfrak{nb}_{G^*}(i)}(y)}{\delta_{V\setminus C}(y)}, \ i\in C.
			\end{aligned}
		\end{equation}
	\end{enumerate}
\end{Theorem}

\subsection{Bivariate marginals}\label{sec:bivariate}
In this section we discuss bivariate marginal distributions of $\operatorname{mult}_G$ and $\operatorname{nm}_G$. As a by-product, we will see that, in general, non-clique marginals fall outside our models, which complements the result of Theorem \ref{thm:marginal}. 

Assume that $G=(V,E)$ is a decomposable graph and fix two different vertices $i,j\in V$.
If $\{i,j\}\in E$, then the joint distribution of $(N_i,N_j)$ in our two models is described by Theorem \ref{thm:marginal} as we have $E\subset\mathcal{C}_G$. 

Suppose $\{i,j\}\notin E$ and let $S=\mathfrak{nb}_G(i)\cap \mathfrak{nb}_G(j)$.  Define $\mathcal{C}_i=\{ C\in\mathcal{C}_G^+\colon i\in C\}$ and $\mathcal{C}_j=\{ C\in\mathcal{C}_G^+\colon j\in C\}$.  For any $C\in\mathcal{C}_i$ and $C'\in\mathcal{C}_j$ we have $C\cap C'\subset S$. If $S=\emptyset$, then $C\cap C'$ is empty. Thus, by the global Markov property \eqref{fact:undirected}, $N_i$ and $N_j$ are independent.
The remaining case $S\neq\emptyset$ is most interesting and is described by the Theorem \ref{thm:pairwise}.  If $S\neq\emptyset$, the above reasoning implies that $S$ separates $i$ and $j$ in $G$. Indeed, the existence of a path connecting $i$ and $j$ with no intersection with $S$ would produce an induced cycle of length $\geq 4$ and  contradict decomposability of $G$. In such a case, $N_i$ and $N_j$ are conditionally independent given $N_S$.
The above argument can be restated using a marginalization in graphical models \cite{Fryd,Stu}.

We start with an example, which illustrates Theorem \ref{thm:pairwise} below.
\begin{expl}
	Let \(G\) be the graph in Example \ref{ex-1-2-3}, abusively this graph is denoted by \(1-2-3\).
	\begin{enumerate}[label={\arabic*)}, itemjoin={,}, itemjoin*={, and}]
		\item If  \(N\sim \operatorname{nm}_G(r,x)\), then 
		\begin{align}\label{eq:nm13}
			\mathbb{P}((N_1,N_3)=(n_1,n_3))=\binom{r+n_1-1}{ n_1}\binom{r+n_3-1}{n_3} x_1^{n_1}x_3^{n_3}\Delta(x)^r {}_2F_1(n_1+r,n_3+r;r;x_2)
		\end{align}
		for $n_1,n_3\in\mathbb{N}$, where $\Delta(x_1,x_2,x_3)=1-x_1-x_2-x_3+x_1x_3=\Delta_G(x)$ and ${}_2F_1$ is the hypergeometric function.
		\item If  \(N\sim \operatorname{mult}_G(r,y)\), then 
		\begin{align}\label{eq:mult13}
			\mathbb{P}((N_1,N_3)=(n_1,n_3))=\binom{r}{ n_1}\binom{r}{ n_3} \frac{y_1^{n_1}y_3^{n_3}}{\delta(y)^r} {}_2F_1(n_1-r,n_3-r;-r;-y_2)
		\end{align}
		for \(n_1,n_3\in \{0,1 ,\ldots,r\}\), where $\delta(y_1,y_2,y_3)=\Delta(-y)$. 
	\end{enumerate}
\end{expl}

\begin{Remark}
	The hypergeometric function ${}_2F_1(a,b,c,\cdot)$ is generally undefined if $c$ is a non-positive integer. However, if $a$ and $b$ are non-positive integers such that $\min\{a,b\}\geq c$, then the definition of the hypergeometric series makes sense as we argue in the proof of Lemma \ref{lem:hypergeo}. 
\end{Remark}

If $(N_1,N_3)$ satisfies \eqref{eq:nm13}, we write $(N_1,N_3)\sim \operatorname{nm}_{1-2-3}^{1,3}(x_1,x_2,x_3)$ and if $(N_1,N_3)$ satisfies \eqref{eq:mult13}, then we write $(N_1,N_3)\sim \operatorname{mult}_{1-2-3}^{1,3}(x_1,x_2,x_3)$. We are ready to formulate the main result of this section.
\begin{Theorem}\label{thm:pairwise} Let $G=(V,E)$ be a decomposable graph. Assume that $i, j\in V$, $i\neq j$, $\{i,j\}\notin E$ and $S=\mathfrak{nb}_G(i)\cap \mathfrak{nb}_G(j)\neq\emptyset$. 
	\begin{enumerate}[label={\arabic*)}, itemjoin={,}, itemjoin*={, and}]
		\item If \(N\sim \operatorname{nm}_G(r,x)\), then $(N_i, N_j)\sim \operatorname{nm}_{1-2-3}^{1,3}(\tilde x_1,\tilde x_2,\tilde x_3)$, where (recall \eqref{eq-xC})
		\begin{equation}\label{eq:tildex}
			\begin{gathered}
				\tilde x_1 =  x_i^{S\cup\{i\}} = 1- \frac{\Delta_{V\setminus S}(x)}{\Delta_{V\setminus(S\cup\{i\})}(x)},\qquad \tilde x_3 = x_j^{S\cup\{j\}}=  1- \frac{\Delta_{V\setminus S}(x)}{\Delta_{V\setminus(S\cup\{j\})}(x)},\\
				\tilde{x}_2  = \frac{\Delta_{V\setminus S}(x)(\Delta_{V\setminus S}(x)-\Delta_G(x))}{\Delta_{V\setminus (S\cup\{i\})}(x) \Delta_{V\setminus (S\cup\{j\})}(x)}.
			\end{gathered}
		\end{equation}
		\item  If \(N\sim \operatorname{mult}_G(r,y)\), then $(N_i, N_j)\sim \operatorname{mult}_{1-2-3}^{1,3}(\tilde y_1,\tilde y_2,\tilde y_3)$, where (recall \eqref{eq-yC})
		\begin{equation}\label{eq:tildey}
			\begin{gathered}
				\tilde y_1 =  y_i^{S\cup\{i\}} =  \frac{\delta_{V\setminus S}(y)}{\delta_{V\setminus(S\cup\{i\})}(y)}-1,\qquad \tilde y_3 = y_j^{S\cup\{j\}}=   \frac{\delta_{V\setminus S}(y)}{\delta_{V\setminus(S\cup\{j\})}(y)}-1,\\
				\tilde{y}_2  = \frac{\delta_{V\setminus S}(y)(\delta_G(y)-\delta_{V\setminus S}(y))}{\delta_{V\setminus (S\cup\{i\})}(y) \delta_{V\setminus (S\cup\{j\})}(y)}.
			\end{gathered}
		\end{equation}
	\end{enumerate}
\end{Theorem}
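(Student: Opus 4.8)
The plan is to condition on $N_S$: since $S$ is a clique that separates $i$ and $j$, the coordinates $N_i$ and $N_j$ decouple given $N_S$, while both ``halves'' $N_{S\cup\{i\}}$ and $N_{S\cup\{j\}}$ are \emph{classical} distributions by Theorem~\ref{thm:marginal}; summing out $N_S$ then produces a Gauss hypergeometric series. First note that $S$ is a clique: if $a,b\in S$ with $a\not\sim b$, then $i-a-j-b-i$ would be an induced $4$-cycle (here $i\not\sim j$), contradicting decomposability. Hence $S,\ S\cup\{i\},\ S\cup\{j\}\in\mathcal{C}_G$, and — as already observed before the statement — $S$ separates $i$ from $j$, so $N_i\perp N_j\mid N_S$. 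Therefore, for every $(n_i,n_j,n_S)$ with $\mathbb{P}(N_S=n_S)>0$,
\[
\mathbb{P}(N_i=n_i,\,N_j=n_j,\,N_S=n_S)=\frac{\mathbb{P}\bigl(N_{S\cup\{i\}}=(n_i,n_S)\bigr)\,\mathbb{P}\bigl(N_{S\cup\{j\}}=(n_j,n_S)\bigr)}{\mathbb{P}(N_S=n_S)},
\]
and by Theorem~\ref{thm:marginal} the three factors are explicit classical negative multinomial (resp.\ multinomial) masses with parameters $x^{S\cup\{i\}},x^{S\cup\{j\}},x^{S}$ (resp.\ their $y$-analogues). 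I would carry out the $\operatorname{nm}_G$ case in detail and then list the parallel changes for $\operatorname{mult}_G$.

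The next step is to rewrite these parameters. Applying \eqref{eq-Delta_GV-v} to the graph $G_{V\setminus S}$ at the vertex $i$, and using \eqref{eq-ind} to identify $\mathfrak{nb}_{(G_{V\setminus S})^*}(i)=\mathfrak{nb}_{G^*}(i)$ (valid because $S\subset\mathfrak{nb}_G(i)$), one gets $x_i\,\Delta_{\mathfrak{nb}_{G^*}(i)}=\Delta_{V\setminus(S\cup\{i\})}-\Delta_{V\setminus S}$, hence by \eqref{eq-xC}
\[
x^{S\cup\{i\}}_i=1-\frac{\Delta_{V\setminus S}}{\Delta_{V\setminus(S\cup\{i\})}}=\tilde x_1,\qquad x^{S\cup\{i\}}_k=(1-\tilde x_1)\,x^{S}_k\quad(k\in S),
\]
and symmetrically for $j$; moreover \eqref{eq-Delta-G-C} with $C=S$ gives $1-|x^{S}_S|=\Delta_G/\Delta_{V\setminus S}=:1-s$. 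Substituting the classical negative multinomial masses into the first display, the common factor $(x^S)^{n_S}$ cancels, and with $m:=|n_S|$ all Pochhammer and combinatorial factors collapse to
\[
\mathbb{P}(N_i=n_i,N_j=n_j,N_S=n_S)=\frac{(r)^{(n_i+m)}(r)^{(n_j+m)}}{(r)^{(m)}\,n_i!\,n_j!\,n_S!}\,\tilde x_1^{n_i}\tilde x_3^{n_j}\,(1-\tilde x_1)^{m+r}(1-\tilde x_3)^{m+r}(1-s)^r\,(x^S)^{n_S}.
\]
Summing first over $n_S$ with $|n_S|=m$ via $\sum_{|n_S|=m}(x^S)^{n_S}/n_S!=s^m/m!$, then over $m$, and using $(n+r)^{(m)}=(r)^{(n+m)}/(r)^{(n)}$ to recognize the $m$-sum as ${}_2F_1\bigl(n_i+r,n_j+r;r;(1-\tilde x_1)(1-\tilde x_3)s\bigr)$, one obtains precisely \eqref{eq:nm13} provided $\tilde x_2=(1-\tilde x_1)(1-\tilde x_3)s$ — and then $\Delta(\tilde x)=(1-\tilde x_1)(1-\tilde x_3)(1-s)$ is automatic. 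So only the formula for $\tilde x_2$ in \eqref{eq:tildex} needs checking, and it drops out of the two clique-polynomial identities just derived together with $1-s=\Delta_G/\Delta_{V\setminus S}$.

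The $\operatorname{mult}_G$ case is entirely parallel, with $\delta_G,\,y$ and descending Pochhammer symbols replacing $\Delta_G,\,x$ and ascending ones. The corresponding identities, now from \eqref{eq-yC}, \eqref{eq-Delta-G-C} and \eqref{eq-Delta_GV-v}, are $1+\tilde y_1=\delta_{V\setminus S}/\delta_{V\setminus(S\cup\{i\})}$, $y^{S\cup\{i\}}_k=(1+\tilde y_1)y^S_k$ for $k\in S$, and $1+|y^S_S|=\delta_G/\delta_{V\setminus S}$. The conversions $(r)_{(n+m)}=(-1)^m(r)_{(n)}(n-r)^{(m)}$ and $(r)_{(m)}=(-1)^m(-r)^{(m)}$ turn the $m$-sum into ${}_2F_1\bigl(n_i-r,n_j-r;-r;-(1+\tilde y_1)(1+\tilde y_3)|y^S_S|\bigr)$, matching \eqref{eq:mult13} once $\tilde y_2=(1+\tilde y_1)(1+\tilde y_3)|y^S_S|$, which is exactly the value in \eqref{eq:tildey}. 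I expect the difficulty to be bookkeeping rather than conceptual: pushing the Pochhammer symbols through the division by $\mathbb{P}(N_S=n_S)$ and through the double summation without error, and — in the multinomial case — tracking the support constraints ($|n_S|+n_i\le r$, etc.) so that the ${}_2F_1$ with third parameter $-r$ is genuinely the terminating series (cf.\ the Remark after Example~\ref{ex-1-2-3}) and the interchange of sums is over finite ranges. The one genuinely graph-theoretic step, the formulas for $\tilde x_2$ and $\tilde y_2$, is nothing more than \eqref{eq-Delta-G-C}--\eqref{eq-Delta_GV-v} applied with $C=S$ and to the induced subgraph $G_{V\setminus S}$.
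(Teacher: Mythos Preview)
Your proposal is correct and follows essentially the same route as the paper: condition on $N_S$, apply Theorem~\ref{thm:marginal} to the cliques $S$, $S\cup\{i\}$, $S\cup\{j\}$, then sum out $n_S$ into a ${}_2F_1$. The paper packages your two-step summation (multinomial identity over $\{n_S:|n_S|=m\}$, then sum over $m$) into a single lemma (their Lemma~\ref{lem:hypergeo}), and it computes $\tilde x_2$ as $\sum_{v\in S}\tfrac{x_v^{S\cup\{i\}}x_v^{S\cup\{j\}}}{x_v^{S}}$ rather than as $(1-\tilde x_1)(1-\tilde x_3)|x^S_S|$, but these are the same quantity by your relation $x_k^{S\cup\{i\}}=(1-\tilde x_1)x_k^S$, so the arguments coincide.
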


\subsection{Markov properties}
\label{sec:org7774c71}
\begin{Theorem}\label{thm:Markov}	\label{thm-strong-directed-hyper-markov-properties}
	Let \(\mathcal{G}\) be a moral DAG with skeleton \(G\).
	\begin{enumerate}[label={\arabic*)}, itemjoin={,}, itemjoin*={, and}]
		\item If \(N\sim \operatorname{nm}_G(r,x)\), then
		\[\mathbb{P}(N=n)=\prod_{i\in V} \operatorname{nb}(r+|n_{\mathfrak{pa}(i)}|,u^{\mathcal{G}}_i)(n_i),\quad n\in\mathbb N^V,\]
		where \(u^{\mathcal{G}}_i = u^{\mathcal{G}}_i(x)\) is defined in \eqref{eq-u-calG} and \(\operatorname{nb}(s,p)(k)\) is the probability of the value $k$ for the negative binomial distribution with parameters $s>0$ and $p\in(0,1)$, see \eqref{eq-cnm2}. In particular, 
		\[\mathbb{P}(N_i=n_i| N_{\mathfrak{pa}(i)}=n_{\mathfrak{pa}(i)})=\operatorname{nb}(r+|n_{\mathfrak{pa}(i)}|,u^{\mathcal{G}}_i)(n_i),\quad i\in V.
		\]
		\item If \(N\sim \operatorname{mult}_G(r,y)\), then
		\[\mathbb{P}(N=n)=\prod_{i\in V} \operatorname{bin}(r-|n_{\mathfrak{pa}(i)}|, w^{\mathcal{G}}_i)(n_i),\quad n\in \mathbb{N}_{G,r}.\]
		where $ \mathbb{N}_{G,r}$ is defined in  \eqref{ngr},  \(w^{\mathcal{G}}_i=w^{\mathcal{G}}_i(y)\) is defined in  \eqref{eq-w-calG} and \(\operatorname{bin}(s,p)(k)\) is the probability of the value $k$ for the binomial distribution with parameters $s\in\mathbb N_+$ and $p\in(0,1)$, see \eqref{multcl2}. In particular,
		\[\mathbb{P}(N_i=n_i| N_{\mathfrak{pa}(i)}=n_{\mathfrak{pa}(i)})=\operatorname{bin}(r-|n_{\mathfrak{pa}(i)}|,w^{\mathcal{G}}_i)(n_i),\quad i\in V.
		\]
	\end{enumerate}
\end{Theorem}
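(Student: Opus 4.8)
The plan is to reduce the statement to a purely combinatorial identity on the normalising constants, after rewriting the ``analytic part'' $x^n\Delta_G(x)^r$ (resp.\ $y^n\delta_G(y)^{-r}$) in the factorizing coordinates of Lemma~\ref{lem-clique-poly-new-coord}, and then to read off the conditional laws from the directed factorization criterion \eqref{fact:directed}. I describe the $\operatorname{nm}_G$ case; the $\operatorname{mult}_G$ case is the same computation with $\delta_G$, $w^{\mathcal G}$, \eqref{eq-y-in-w}, rising factorials replaced by falling ones, and $r\in\mathbb N_+$.

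\emph{Analytic step.} Fix $x\in M_G$ and set $u_i=u^{\mathcal G}_i(x)$. By Lemma~\ref{lem-clique-poly-new-coord} we have $\Delta_G(x)=\prod_{i\in V}(1-u_i)$, and from the inverse map \eqref{eq-x-in-u}, $x_i=u_i\prod_{j\in\mathfrak{ch}(i)}(1-u_j)$. Substituting the latter into $x^n$ and collecting, the factor $(1-u_j)$ is raised to the power $\sum_{i\in\mathfrak{pa}(j)}n_i=|n_{\mathfrak{pa}(j)}|$, hence $x^n\,\Delta_G(x)^r=\prod_{i\in V}u_i^{n_i}(1-u_i)^{r+|n_{\mathfrak{pa}(i)}|}$. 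Since the target factor at $i$ is $\operatorname{nm}(r+|n_{\mathfrak{pa}(i)}|,u_i)(n_i)=\binom{r+|n_{\mathfrak{pa}(i)}|+n_i-1}{n_i}u_i^{n_i}(1-u_i)^{r+|n_{\mathfrak{pa}(i)}|}$, the theorem becomes equivalent to
\[
C_G(n,r)=\prod_{i\in V}\binom{r+|n_{\mathfrak{pa}(i)}|+n_i-1}{n_i}=\prod_{i\in V}\frac{(r+|n_{\mathfrak{pa}(i)}|)^{(n_i)}}{n_i!},\qquad n\in\mathbb N^V.
\]

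\emph{Combinatorial step.} I would prove this identity by induction on $|V|$, the case $|V|=1$ being immediate. For the step, pick a sink $i_0$ of $\mathcal G$ (a finite acyclic digraph always has one); by morality its parents $\mathfrak{pa}(i_0)=\mathfrak{nb}_G(i_0)=:S$ form a clique, so $i_0$ is simplicial in $G$ and $C_0:=S\cup\{i_0\}$ is the unique maximal clique of $G$ containing it. Deleting $i_0$ gives the moral DAG $\mathcal G_{V\setminus\{i_0\}}$ with skeleton $G_{V\setminus\{i_0\}}$, and, as $i_0$ is nobody's parent, $\mathfrak{pa}$ is unchanged on $V\setminus\{i_0\}$. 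Using the standard description of how the maximal cliques and minimal separators (with multiplicities) of a decomposable graph change under deletion of a simplicial vertex --- two cases, according to whether $S$ becomes a maximal clique of $G_{V\setminus\{i_0\}}$ or not (cf.\ \cite{PeytonBlair,lauritzen1996graphical}) --- together with $(r)^{(|n_{C_0}|)}=(r)^{(|n_S|)}\,(r+|n_S|)^{(n_{i_0})}$, one checks that in both cases $C_G(n,r)=\binom{r+|n_S|+n_{i_0}-1}{n_{i_0}}\,C_{G_{V\setminus\{i_0\}}}(n_{V\setminus\{i_0\}},r)$, and the induction hypothesis finishes the proof. For $\operatorname{mult}_G$ one uses instead $(r)_{(|n_{C_0}|)}=(r)_{(|n_S|)}\,(r-|n_S|)_{(n_{i_0})}$; here one also records that $\mathbb N_{G,r}=\{n\in\mathbb N^V:\ |n_{\overline{\mathfrak{pa}}(i)}|\le r\ \ \forall i\in V\}$, because $\overline{\mathfrak{pa}}_{\mathcal G}(i)$ is a clique and every maximal clique $C$ of $G$ equals $\overline{\mathfrak{pa}}_{\mathcal G}(v)$ for the unique $v\in C$ with no $\mathcal G$-child in $C$, so that the supports of the two sides coincide and every falling factorial occurring is positive.

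\emph{Conclusion.} Combining the two steps yields $\mathbb P(N=n)=\prod_{i\in V}\operatorname{nm}(r+|n_{\mathfrak{pa}(i)}|,u^{\mathcal G}_i)(n_i)$ for $n\in\mathbb N^V$ (summing this over $n$ starting from the sinks of $\mathcal G$, and using that each factor is a genuine negative binomial mass function, also re-derives that \eqref{eq-PNn} defines a probability distribution). Since $n_i\mapsto\operatorname{nm}(r+|n_{\mathfrak{pa}(i)}|,u^{\mathcal G}_i)(n_i)$ is, for every fixed value of $n_{\mathfrak{pa}(i)}$, a probability distribution on $\mathbb N$, this is a factorization of the form \eqref{fact:directed}; marginalizing out the coordinate at a sink one vertex at a time shows that $N$ is Markov with respect to $\mathcal G$ and that the displayed factors are exactly the conditional probabilities $\mathbb P(N_i=n_i\mid N_{\mathfrak{pa}(i)}=n_{\mathfrak{pa}(i)})$, which is the ``in particular'' statement. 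I expect the clique/separator bookkeeping in the combinatorial step to be the only genuinely delicate point; everything else is a substitution or a one-line manipulation of Pochhammer symbols.
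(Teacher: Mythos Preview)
Your proposal is correct, and the analytic step is identical to the paper's. For the combinatorial identity $C_G(n,r)=\prod_{i\in V}\binom{r+|n_{\mathfrak{pa}(i)}|+n_i-1}{n_i}$, the paper takes a slightly different route: it invokes the relation $K_G(n,r)=C_G(n,r)\,\pi_V(n)$ between $C_G$ and the normalizing constant of the $G$-Dirichlet law (Remark~\ref{rmk-relation-KG-CG}) together with the product formula \eqref{eq-KG-per-i} for $K_G$ from Theorem~\ref{thm-norcons}; the latter is proved later in the paper by precisely the sink-removal induction with the two-case clique/separator bookkeeping that you describe. So you are doing the same induction, but applied directly to $C_G$ rather than via the detour through $K_G$. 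This buys you self-containment (no forward reference to the Dirichlet normalizing constant), while the paper's packaging has the advantage of proving the identity once for $K_G$ and reusing it. Your remark on the equality of supports in the $\operatorname{mult}_G$ case (that every maximal clique equals $\overline{\mathfrak{pa}}_{\mathcal G}(v)$ for some $v$) is a point the paper handles only implicitly.
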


The representations of the probability mass functions for $\operatorname{nm}_G$ and $\operatorname{mult}_G$ given in the result below, in view of Theorem \ref{thm-strong-directed-hyper-markov-properties}, follow easily by comparing \eqref{fact:undirected} and \eqref{fact:directed}.
\begin{Proposition}	\label{prop-nm-mult-with-cliques}
	Let \(G=(V,E)\) be a decomposable graph.
	\begin{enumerate}[label={\arabic*)}, itemjoin={,}, itemjoin*={, and}]
		\item If \(N\sim \operatorname{nm}_G(r,x)\), then
		\begin{equation}
			\label{eq-nmG-C-S}
			\begin{aligned}
				\mathbb{P}(N=n)=\frac{\prod_{C\in \mathcal{C}_G^+} \operatorname{nm}(r,x^C)(n_C)}{\prod_{S\in \mathcal{S}_G^-}\left( \operatorname{nm}(r,x^S)(n_S) \right)^{\nu_S}},\quad n\in\mathbb N^V,
			\end{aligned}
		\end{equation}
		where \(\operatorname{nm}(r,v)\) is the negative multinomial distribution, see \eqref{eq-cnm}, and \(x^C\), $C\in\mathcal C_G$, is defined in \eqref{eq-xC}.
		\item If \(N\sim \operatorname{mult}_G(r,y)\), then
		\begin{equation}
			\label{eq-multG-C-S}
			\begin{aligned}
				\mathbb{P}(N=n)=\frac{\prod_{C\in \mathcal{C}_G^+} \operatorname{mult}(r,y^C)(n_C)}{\prod_{S\in \mathcal{S}_G^-}\left( \operatorname{mult}(r,y^S)(n_S) \right)^{\nu_S}},\quad n\in\mathbb N_{G,r},
			\end{aligned}
		\end{equation}
		where \(\operatorname{mult}(r, v)\) is the multinomial distribution, see \eqref{multcl}, and  \(y^C\), $C\in \mathcal C_G$, is defined in \eqref{eq-yC}.
	\end{enumerate}
\end{Proposition}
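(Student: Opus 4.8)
The plan is to recognise the product formula of Theorem~\ref{thm:Markov} as the directed Markov factorization \eqref{fact:directed}, transport it to the undirected factorization \eqref{fact:undirected} via the equivalence that holds on decomposable graphs, and then read off the clique and separator marginals from Theorem~\ref{thm:marginal}. Since $G$ is decomposable, I would first fix a moral DAG $\mathcal G$ with skeleton $G$. For $N\sim\operatorname{nm}_G(r,x)$, Theorem~\ref{thm:Markov} both exhibits $\mathbb P(N=n)=\prod_{i\in V}\operatorname{nm}(r+|n_{\mathfrak{pa}(i)}|,u^{\mathcal G}_i)(n_i)$ and identifies each factor with $\mathbb P(N_i=n_i\mid N_{\mathfrak{pa}(i)}=n_{\mathfrak{pa}(i)})$, so $N$ satisfies \eqref{fact:directed} relative to $\mathcal G$. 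By Proposition~3.28 of \cite{lauritzen1996graphical} (recalled in Section~\ref{sec:org0110ac4}), for decomposable $G$ this is equivalent to
\[
\mathbb P(N=n)=\frac{\prod_{C\in\mathcal C_G^+}\mathbb P(N_C=n_C)}{\prod_{S\in\mathcal S_G^-}\mathbb P(N_S=n_S)^{\nu_S}},\qquad n\in\mathbb N^V,
\]
and the same holds for $N\sim\operatorname{mult}_G(r,y)$ upon replacing $\mathbb N^V$ by $\mathbb N_{G,r}$ and invoking the second item of Theorem~\ref{thm:Markov}.

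It then remains to insert the marginals. By Theorem~\ref{thm:marginal}, for every $C\in\mathcal C_G^+$ one has $N_C\sim\operatorname{nm}_{K_C}(r,x^C)$, i.e.\ $\mathbb P(N_C=n_C)=\operatorname{nm}(r,x^C)(n_C)$ with $x^C$ given by \eqref{eq-xC}; and since $\mathcal S_G^-\subset\mathcal C_G$, the same theorem applies verbatim to each separator $S\in\mathcal S_G^-$ and gives $\mathbb P(N_S=n_S)=\operatorname{nm}(r,x^S)(n_S)$. Substituting into the displayed factorization yields \eqref{eq-nmG-C-S}, and the identical substitution in the multinomial case yields \eqref{eq-multG-C-S}; there one should also record the support bookkeeping, namely that for $n\in\mathbb N_{G,r}$ and $S\in\mathcal S_G^-$ there is a $C\in\mathcal C_G^+$ with $S\subset C$, whence $|n_S|\le|n_C|\le r$, so every denominator factor $\operatorname{mult}(r,y^S)(n_S)$ is strictly positive and the right-hand side of \eqref{eq-multG-C-S} is well defined.

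I expect the only delicate point to be the applicability of the equivalence \eqref{fact:directed}$\Leftrightarrow$\eqref{fact:undirected} in the multinomial case, where $\mathrm{supp}\,\mathbb P_N=\mathbb N_{G,r}$ is a proper subset of $\mathbb N^V$ (in the negative multinomial case the support is all of $\mathbb N^V$ and there is nothing to check); this is handled by observing that $\mathbb N_{G,r}=\{n:\ |n_C|\le r\text{ for all }C\in\mathcal C_G^+\}$ is precisely the common support of the clique marginals, so the positivity assumptions underlying Proposition~3.28 of \cite{lauritzen1996graphical} are met on $\mathrm{supp}\,\mathbb P_N$. As an independent route one may bypass the Markov machinery and verify \eqref{eq-nmG-C-S}--\eqref{eq-multG-C-S} directly from Definition~\ref{def-graph-multinomial-and-negative-multinomial-distributions}: the Pochhammer and factorial factors of the right-hand side reproduce $C_G(n,r)$ once one uses the vertex-local identity $\#\{C\in\mathcal C_G^+:\ i\in C\}-\sum_{S\in\mathcal S_G^-:\ i\in S}\nu_S=1$, while the relations $1-\sum_{i\in C}x^C_i=\Delta_G(x)/\Delta_{V\setminus C}(x)$ (a consequence of \eqref{eq-Delta-G-C}) together with the global count $\#\mathcal C_G^+-\sum_{S\in\mathcal S_G^-}\nu_S=1$ collapse the remaining $\Delta$- and $x$-monomial factors to $x^n\,\Delta_G(x)^r$; organizing these telescopings over a perfect ordering of $\mathcal C_G^+$ would be the bulk of that alternative argument.
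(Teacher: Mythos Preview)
Your proposal is correct and follows exactly the approach indicated in the paper: the proposition is stated as an immediate consequence of Theorem~\ref{thm-strong-directed-hyper-markov-properties} together with the equivalence of \eqref{fact:directed} and \eqref{fact:undirected} on decomposable graphs, after which Theorem~\ref{thm:marginal} supplies the clique and separator marginals. Your additional remarks on support bookkeeping in the multinomial case and the alternative direct-verification route are not in the paper but are sound complements.
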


\subsection{Characterizations through global Markov properties}\label{sec:char-markov}
In this section, we demonstrate that both  discrete distributions we consider, emerge in a natural way from global Markov properties.  Specifically, we show that the distributions are determined by the structure of a decomposable graph and their clique marginals.
\begin{Theorem}\label{thm:char-markov}
	Let $N=(N_i)_{i\in V}$ be a random vector and let $G=(V,E)$ be a connected decomposable graph.  Suppose that the distribution of $N$ is $G$-Markov.
	\begin{enumerate}[label={\arabic*)}, itemjoin={,}, itemjoin*={, and}]
		\item If for each $C\in\mathcal{C}_G^+$  the marginal $N_C$ follows a negative multinomial $\operatorname{nm}$  distribution, then there exists $(r,x)\in(0,\infty)\times M_G$ such that $N\sim\operatorname{nm}_G(r,x)$.
		\item If for each $C\in\mathcal{C}_G^+$ the marginal $N_C$ follows a multinomial $\operatorname{mult}$ distribution,  then there exists $(r,y)\in\mathbb{N}_+\times (0,\infty)^V$ such that $N\sim\operatorname{mult}_G(r,y)$. 
	\end{enumerate}
\end{Theorem}

\begin{Remark}\label{rem:non-dec}
	It might be tempting to define distributions $\operatorname{mult}_G$ and $\operatorname{nm}_G$ for general, not necessarily decomposable, graphs $G$. In particular, the formulas for probabilities for $\operatorname{mult}_G$ indeed define a valid probability distributions for any graph $G$. However, we will show that for a special non-decomposable graph $G$, the distribution $\mathbb{P}(N=n)=\gbinom{r}{n}{\,G}y^n\delta_G(y)^{-r}$ is $G$-Markov if $r=1$, but this property does not hold for $r>1$. With the same $G$, if $\mathbb{P}(N=n)=\gbinomm{|n|+r-1}{n}{\,G}x^n\Delta_G(x)^{r}$, then it fails to be $G$-Markov for any $r>0$.  This motivates the restriction of our definitions to decomposable graphs.	
	
Assume that $\mathbb{P}(N=n)=\gbinom{r}{n}{\,C_4} y^n \delta_{C_4}(y)^{-r}$, where $C_4$ is the cycle of length $4$. The distribution of $N$ is $C_4$-Markov if $N_1$ and $N_3$ are conditionally independent given $(N_2,N_4)$, and if $N_2$ and $N_4$ are conditionally independent given $(N_1,N_3)$. In view of symmetry, it suffices to check only the first of these properties, which is equivalent to the following factorization
	\begin{align}\label{eq:C4Markov}
	\mathbb{P}(N=n) = \frac{\mathbb{P}(N_{\{1,2,4\}}=n_{\{1,2,4\}})\mathbb{P}(N_{\{2,3, 4\}}=n_{\{2,3,4\}})}{\mathbb{P}(N_{\{2,4\}}=n_{\{2,4\}})}.
	\end{align}
When $r=1$, the support of $N$ is $\mathbb{N}_{C_4,1}=\{ n\in\mathbb{N}^4\colon \max\{n_1+n_2, n_2+n_3, n_3+n_4, n_1+n_4\}\leq 1\}$. Therefore, 
\begin{align*}
\mathbb{P}(N=n) &= \frac{y^n}{\delta_{C_4}(y)} \mathbbm{1}_{\mathbb{N}_{G,1}}(n) =  \frac{\prod_{i=1}^4 y_i^{n_i}}{\delta_{C_4}(y)}\mathbbm{1}_{n_1\leq (1-n_2)\wedge(1-n_4)}\mathbbm{1}_{n_3\leq (1-n_2)\wedge(1-n_4)}\\
&=f(n_{\{1,2,4\}})\, g(n_{\{2,3,4\}}),
\end{align*}
which readily implies the factorization in \eqref{eq:C4Markov}. 

Now, consider $r=2$ and let $n=(1,1,1,1)$. The support of $N$, i.e., $\mathbb{N}_{C_4,2}$ has $24$ elements. We compute
	\begin{align*}
		\mathbb{P}(N_{\{1,2,4\}}=(1,1,1)) & = \left(\gbinom{2}{1,1,0,1}{\,C_4} y_1 y_2 y_4 + \gbinom{2}{1,1,1,1}{\,C_4} y_1y_2y_3y_4\right)/\delta_{C_4}(y)^2,\\
		\mathbb{P}(N_{\{2,3,4\}}=(1,1,1)) &= \left(\gbinom{2}{0,1,1,1}{\,C_4}  y_2 y_3 y_4 + \gbinom{2}{1,1,1,1}{\,C_4} y_1y_2y_3y_4\right)/\delta_{C_4}(y)^2,\\
		\mathbb{P}(N_{\{2,4\}}=(1,1)) & = \frac{\gbinom{2}{0,1,0,1}{\,C_4}  y_2 y_4 + \gbinom{2}{1,1,0,1}{\,C_4} y_1 y_2 y_4 + \gbinom{2}{0,1,1,1}{\,C_4} y_2 y_3 y_4+ \gbinom{2}{1,1,1,1}{\,C_4} y_1y_2y_3y_4}{\delta_{C_4}(y)^2}.
	\end{align*}
By collecting the monomial terms of  $\delta_{C_4}(y)^2=(1+y_1+y_2+y_3+y_4+y_1 y_3 + y_2 y_4)^2$, it is straightforward to verify that
	\begin{align*}
		\gbinom{2}{0,1,0,1}{C_4} = 4, \qquad \gbinom{2}{1,1,0,1}{C_4} = \gbinom{2}{0,1,1,1}{C_4} = \gbinom{2}{1,1,1,1}{C_4} = 2.  
	\end{align*}
Thus,  for $n=(1,1,1,1)$, the right-hand side of \eqref{eq:C4Markov} becomes 
	\[
 \frac{2 y_1(1+y_1) y_2 y_3(1+y_3) y_4}{(2+y_1+y_3+y_1 y_3) \delta_{C_4}(y)^2},
	\]
which does not equal $\mathbb{P}(N=(1,1,1,1))$. 	This shows that the distribution of $N$ is not $C_4$-Markov for $r=2$. 

If $\mathbb{P}(N=n)=\gbinomm{|n|+r-1}{n}{C_4}x^n\Delta_{C_4}(x)^{r}$, then, using Remark \ref{rem:Cd}, we can show that $\gbinomm{|n|+r-1}{n}{C_4}$ is not of the form $\prod_{C\in\mathcal{C}_{C_4}^+} f_C(n_C)$, which implies that it is not $C_4$-Markov  for any $r>0$. 
\end{Remark}

\section{Probabilistic interpretations}
\label{sec:org7b431d8}
 Multinomial and negative multinomial distributions have the following urn model interpretations:  Consider an urn containing infinitely many balls of \(m+1 \ge 2\) different colors, \(0,1 ,\ldots,m\). The proportions of the balls of each color are given by \(x_0,x_1 ,\ldots,x_m\). Balls are drawn from the urn one by one at random with replacement. If the sampling stops after \(r\)'th draw, $r\in \mathbb N_+$, then \(N=(N_0,\ldots,N_m)\) (where \(N_i\) representing the number of balls of color \(i\) drawn) follows the multinomial distribution \(\operatorname{mult}(r,x)\). 
On the other hand, if sampling stops when the number of white balls drawn (we label $0$ as the white color)  reaches the predetermined integer value \(r\in \mathbb N_+\), then the distribution of \(N\) is negative multinomial \(\operatorname{nm}(r,x)\).

Let us  provide probabilistic interpretations  for  the \(G\)-negative multinomial distribution and the \(G\)-multinomial distribution. We fix a finite, not necessarily decomposable, graph \(G=(V,E)\). Consider \(V\) as the alphabet, with its elements now referred to as letters. Let \(\mathfrak V\)  be the free monoid generated from  \(V\) through the operation of concatenation, that is, \(\mathfrak V\) is the collection of finite sequences of letters, called words.

The edges \(E^*\) of the complement graph \(G^*\) define a commutation rule for concatenation:
\begin{equation}
	\label{eq-commutation-rule}
	\begin{aligned}
		i_1i_2=i_2i_1\quad \mbox{ i.f.f. } \quad\{i_1,i_2\}\in E^*.
	\end{aligned}
\end{equation}
Two words \(w,w'\in \mathfrak V\) are adjacent if there are \(w_1,w_2\in \mathfrak V\) and \(\{i_1,i_2\}\in E^*\) such that
\[w=w_1i_1i_2w_2 \text{ and }w'=w_1i_2i_1w_2.\]
This further defines an equivalence  relation \(\sim\) on \(\mathfrak V\): two words \(w,w'\in \mathfrak V\) are equivalent if there is a sequence of words \(w_0,w_1 ,\ldots,w_k\in\mathfrak V\) such that consecutive words in the sequence are adjacent and \(w_0=w,\ w_k=w'\). Let \(L=\mathfrak V/\sim\) be the quotient monoid, with elements in \(L\) denoted by \([w]\), where \(w\in\mathfrak V\) is any representative of the equivalence class \([w]\). We say that \(L\) is the free  quotient monoid induced by \(G=(V,E)\).

For a clique \(C\in \mathcal{C}_{G^*}\), we denote \([C]=[\pi_C]\), the equivalence class of the product of letters from \(C\). This notation is well-defined since all vertices in \(C\) commute, recall \eqref{eq-commutation-rule}. Next, we consider the free quotient  monoid algebra \(\mathbb{Z}[L]\), that is, the set of formal sums \(\sum_{[w]\in L}z_w [w]\), where \(z_w\) is an integer. The following theorem can be found in \cite{cartier1969commutation}, see also \cite{krattenthaler2006theory,viennot1986heaps}.
\begin{Theorem}[Cartier--Foata]\label{CFT}
	For any graph \(G=(V,E)\), consider \(L\), the free  quotient monoid induced by \(G\). The element $\sum_{C\in \mathcal{C}_{G^*}} (-1)^{|C|} [C]$ has a unique inverse in \(\mathbb{Z}[L]\); it is $\sum_{[w]\in L}[w]$ which is both left and right inverse, i.e.
	\begin{equation}
		\label{eq-cartier-foata-monoid-ring}
		\begin{aligned}
			\left( \sum_{C\in \mathcal{C}_{G^*}} (-1)^{|C|} [C] \right)^{-1} = \sum_{[w]\in L}[w].
		\end{aligned}
	\end{equation}
	In particular, \(\Delta_G(x)^{-1}=\sum_{[w]\in L}\pi_w(x)\), where \(\pi_w(x)=\prod_{i\in w}x_i\)  for \(w\in \mathfrak V\)  and the equality holds for all $x\in\mathbb R^V$ such that the right-hand side converges.
	\label{thm-cartier-foata}
\end{Theorem}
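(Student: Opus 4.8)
The plan is first to establish the monoid identity \eqref{eq-cartier-foata-monoid-ring} by an explicit sign‑reversing involution on traces, then to get uniqueness of the inverse from ring generalities, and finally to deduce the statement about $\Delta_G(x)^{-1}$ by transporting the identity along the ``content'' homomorphism. As preliminaries: adjacent words share the same multiset of letters, so the content $\mathfrak c(t)\in\mathbb N^V$ and the length $|t|:=|\mathfrak c(t)|$ are well defined on $L$ and satisfy $|uv|=|u|+|v|$; hence each trace has only finitely many factorizations $t=uv$, the convolution product of (possibly infinite) $\mathbb Z$‑linear combinations of traces is well defined, $\mathbb Z[L]$ is an associative unital ring whose unit is the class of the empty word, and $L$ is cancellative. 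Consequently, for each trace $t$ the set $P_t:=\{(C,w)\colon C\in\mathcal C_{G^*},\ w\in L,\ [\pi_C]\,w=t\}$ is finite (for each of the finitely many cliques $C$ of $G^*$, cancellativity pins down $w$ if it exists), and the coefficient of $[t]$ in the product $\bigl(\sum_{C\in\mathcal C_{G^*}}(-1)^{|C|}[C]\bigr)\bigl(\sum_{[w]\in L}[w]\bigr)$ equals $\sum_{(C,w)\in P_t}(-1)^{|C|}$. It therefore suffices to show this alternating sum is $1$ when $t$ is the empty trace and $0$ otherwise; the empty case is immediate since then $P_t$ has a single element.

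Now fix a linear order on $V$ and, for a nonempty trace $t$, let $a=a(t)$ be the smallest letter occurring as a \emph{first} letter of some representative of $t$. The one structural input I will use — a standard fact from the combinatorics of partially commutative (trace) monoids (cancellativity together with a Levi‑type lemma for initial letters; see \cite{cartier1969commutation,viennot1986heaps}) — is: whenever $[\pi_C]\,w=t$ with $C\in\mathcal C_{G^*}$, either $a\in C$, or else $a\notin C$, in which case $a$ commutes with every letter of $C$ (so $C\cup\{a\}\in\mathcal C_{G^*}$) and $w$ admits $a$ as a first letter, $w=[\pi_{\{a\}}]\,w'$ for a unique $w'\in L$. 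Using this, define $\Phi\colon P_t\to P_t$ by $\Phi(C,w)=(C\setminus\{a\},\,[\pi_{\{a\}}]\,w)$ if $a\in C$, and $\Phi(C,w)=(C\cup\{a\},\,w')$ if $a\notin C$. Since letters of a clique of $G^*$ pairwise commute, $[\pi_{C\setminus\{a\}}][\pi_{\{a\}}]=[\pi_C]$ and, in the case $a\notin C$, $[\pi_{C\cup\{a\}}]=[\pi_C][\pi_{\{a\}}]$; a short check then shows that $\Phi$ lands in $P_t$, is an involution, and flips the parity of $|C|$. Hence $\sum_{(C,w)\in P_t}(-1)^{|C|}=0$ for every nonempty $t$, which gives $\bigl(\sum_{C\in\mathcal C_{G^*}}(-1)^{|C|}[C]\bigr)\bigl(\sum_{[w]\in L}[w]\bigr)=1$. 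The mirror‑image argument — using the largest \emph{final} letter of $t$ and splitting off one letter on the right of $w$ — yields $\bigl(\sum_{[w]\in L}[w]\bigr)\bigl(\sum_{C\in\mathcal C_{G^*}}(-1)^{|C|}[C]\bigr)=1$. A two‑sided inverse in any ring is unique, since $ab=ba=1$ and $ab'=1$ force $b'=(ba)b'=b(ab')=b$; this settles the first assertion.

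For the second assertion, fix $x$: the assignment $[w]\mapsto\pi_w(x)=\prod_{i\in w}x_i$ is well defined on $L$ (adjacent words have equal content) and additive under concatenation, hence extends to a ring homomorphism from $\mathbb Z[L]$ into the ring of formal power series in $(x_i)_{i\in V}$, with $[C]\mapsto\pi_C(x)$. Applying it to \eqref{eq-cartier-foata-monoid-ring} and recalling $\Delta_G=\sum_{C\in\mathcal C_{G^*}}(-1)^{|C|}\pi_C$ from \eqref{eq-clique-polynomial-DeltaG} gives the formal identity $\Delta_G(x)\cdot\sum_{[w]\in L}\pi_w(x)=1$. Grouping by content, $\sum_{[w]\in L}\pi_w(x)=\sum_{n\in\mathbb N^V}a_n x^n$ with $a_n=\#\{[w]\in L\colon\mathfrak c([w])=n\}\ge0$; since $\Delta_G$ is a polynomial, wherever $\sum_n a_nx^n$ converges — which, the coefficients being nonnegative, it does absolutely on its domain of convergence, in particular near $0$ — the product may be expanded and regrouped termwise and, by the formal identity, equals $1$. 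Hence $\Delta_G(x)^{-1}=\sum_{[w]\in L}\pi_w(x)$ on that domain. (As a by‑product, comparison with \eqref{eq-invDelta} identifies $C_G(n,1)$ as the number of traces of content $n$.)

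The real difficulty is the structural input of the second paragraph: that ``moving the minimal initial letter'' is well defined on equivalence classes, and that an initial letter of $t$ not lying in the front layer $\pi_C$ must commute with all of $C$. This is transparent in Viennot's heaps‑of‑pieces picture — $\sum_C(-1)^{|C|}[C]$ is the alternating generating series of trivial heaps and $\sum_{[w]}[w]$ that of all heaps, and \eqref{eq-cartier-foata-monoid-ring} is the heap inclusion--exclusion — but a rigorous treatment rests on the Levi/normal‑form lemmas for trace monoids, which we would cite (\cite{cartier1969commutation,viennot1986heaps,krattenthaler2006theory}) rather than reprove; the remainder is the bookkeeping above.
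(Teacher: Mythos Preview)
The paper does not prove this theorem at all: it states the Cartier--Foata identity and simply refers the reader to \cite{cartier1969commutation,krattenthaler2006theory,viennot1986heaps}. So there is no ``paper's own proof'' to compare against; what you have written is in fact a self-contained account of the classical argument those references contain. Your sign-reversing involution on $P_t$ (toggle the minimal initial letter $a(t)$ in or out of the clique factor) is exactly the standard proof, and your identification of the one nontrivial ingredient --- that an initial letter of $t$ not lying in $C$ commutes with all of $C$ and is an initial letter of the cofactor $w$ --- is correct and follows, as you indicate, from the Levi/cancellation lemmas for trace monoids. The derivation of $\Delta_G(x)^{-1}=\sum_{[w]\in L}\pi_w(x)$ via the content homomorphism is also fine, and the convergence remark is adequate for the use the paper makes of the identity (only $x\in M_G$ is ever needed). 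In short: your proposal is correct and goes well beyond what the paper itself supplies; it \emph{is} the proof the paper outsources to the cited literature.
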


We obtain the following representation of the $\operatorname{nm}_G$ distribution. 
\begin{Theorem}	\label{thm-nm-proba-interpretation}
	For a finite graph \(G=(V,E)\) and \(x\in M_G\), let \(\ell\) be a random variable valued in \(L\), the  free quotient monoid induced by \(G\), with distribution
	\[\mathbb{P}(\ell=[w])=\frac{\pi_w(x)}{\sum_{[w']\in L}\pi_{w'}(x)},\qquad [w]\in L,\]
	where \(\pi_w(x)=\prod_{i\in w}x_i\)  and $x\in M_G$. If \(\varepsilon\colon L\to \mathbb{N}^V\) is such that \(\varepsilon([w])\) counts the number of occurrences of each letter from $V$ in \([w]\in L\), then \(\varepsilon(\ell)\sim \operatorname{nm}_G(1,x)\).
\end{Theorem}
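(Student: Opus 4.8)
The plan is to compute the law of $\varepsilon(\ell)$ directly from its definition and identify it as $\operatorname{nm}_G(1,x)$ through the Cartier--Foata identity. The starting point is the consistency fact that makes the whole construction sensible: the letter-count map $\varepsilon$ is well defined on $L$, because passing from a word to an adjacent word only transposes two neighbouring (commuting) letters and hence preserves the number of occurrences of each letter; iterating, every representative of a class $[w]$ carries the same vector $\varepsilon([w])\in\mathbb N^V$, so $\pi_w(x)=\prod_{i\in w}x_i=x^{\varepsilon([w])}$ is a class function. Grouping the classes by their letter count then yields, for $n\in\mathbb N^V$,
\[
\mathbb P(\varepsilon(\ell)=n)=\frac{a_n\,x^n}{Z},\qquad a_n:=\#\{[w]\in L:\ \varepsilon([w])=n\},\qquad Z:=\sum_{[w']\in L}\pi_{w'}(x).
\]

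Next I would identify $a_n$ and $Z$. By Theorem~\ref{thm-cartier-foata}, wherever the right-hand series converges, $Z=\sum_{[w]\in L}\pi_w(x)=\Delta_G(x)^{-1}$; regrouping by letter count rewrites this as $Z=\sum_{n\in\mathbb N^V}a_n x^n$. On the other hand, \eqref{eq-invDelta} with $r=1$ is the expansion $\Delta_G(x)^{-1}=\sum_n C_G(n,1)x^n$. Both are power series with nonnegative coefficients, and both converge on $\{x\in(0,\infty)^V:\ |x_V|<1\}$ --- for the first, because each class with letter count $n$ contains at least one of the $|n|!/n!$ words with that count and distinct classes are disjoint, so $a_n\le|n|!/n!$ and $Z\le(1-|x_V|)^{-1}$. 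Since that set is a neighbourhood of $0$ inside $M_G$ (Remark~\ref{rmk-remark-on-the-support-set}), comparing coefficients gives $a_n=C_G(n,1)$ for every $n$; in particular this re-proves that the coefficients of $\Delta_G^{-1}$ are nonnegative.

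To finish I would extend the identity $Z=\Delta_G(x)^{-1}$ to all of $M_G$. Equivalently, $\sum_n C_G(n,1)x^n$ must converge on $M_G$ with $\sum_n C_G(n,1)x^n\,\Delta_G(x)=1$, i.e. $\operatorname{nm}_G(1,x)$ is a genuine probability distribution there; this follows from the moral-DAG factorisation of Theorem~\ref{thm:Markov}, by summing $\prod_{i\in V}\operatorname{nm}(1+|n_{\mathfrak{pa}(i)}|,u^{\mathcal G}_i)(n_i)$ over $n$ in a topological order of $\mathcal G$ and using $\Delta_G=\prod_i(1-u^{\mathcal G}_i)$ from \eqref{eq-deltaG}. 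Then $Z=\Delta_G(x)^{-1}$ and $a_n=C_G(n,1)$ give
\[
\mathbb P(\varepsilon(\ell)=n)=C_G(n,1)\,x^n\,\Delta_G(x),\qquad n\in\mathbb N^V,
\]
which is precisely the mass function \eqref{eq-PNn} of $\operatorname{nm}_G(1,x)$, so $\varepsilon(\ell)\sim\operatorname{nm}_G(1,x)$.

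I expect the only delicate point to be the bookkeeping around convergence and coefficient extraction --- verifying that $Z<\infty$ on \emph{all} of $M_G$ (not only near the origin) and that the combinatorial series $\sum a_n x^n$ from Cartier--Foata and the probabilistic series $\sum C_G(n,1)x^n$ from \eqref{eq-invDelta} may be matched term by term. Once the well-definedness of $\varepsilon$ is secured, the rest is a formal consequence of the Cartier--Foata identity and of results already proved in the paper.
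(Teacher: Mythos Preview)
Your proof is correct and follows essentially the same route as the paper: group classes by letter count, invoke Cartier--Foata to identify the normalizing sum with $\Delta_G(x)^{-1}$, and match coefficients against \eqref{eq-invDelta} to obtain $a_n=C_G(n,1)$. The paper's own argument is the bare-bones version of this; your additions (well-definedness of $\varepsilon$ on classes, the bound $a_n\le |n|!/n!$ for convergence near the origin, and the appeal to Theorem~\ref{thm:Markov} for convergence on all of $M_G$) fill in details the paper leaves implicit.
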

In particular, we obtain
\begin{align}\label{eq:nn}
\gbinomm{|n|}{n}{G} = \left|\{[w]\in L\colon\varepsilon([w])=n \}\right|.
\end{align}

In case of \(\operatorname{mult}_G(1,y)\) distribution the situation is much simpler, we do not need the free quotient monoid model. Instead, we rely only on  sampling cliques of $G^*$. 
\begin{Theorem}	\label{thm-mult-proba-interpretation}
	For a finite graph \(G=(V,E)\) and \(y\in (0,\infty)^V\), let \(\ell\) be a random variable valued in \(\mathcal{C}_{G^*}\), with distribution
	\[\mathbb{P}(\ell=C)=\frac{\pi_C(y)}{\delta_G(y)},\qquad  C\in \mathcal{C}_{G^*}.\]
	If $\varepsilon\colon \mathcal C_{G^*}\to\mathbb N_{G,1}$ is such that \(\varepsilon(C)\) counts the number of  occurrences of each vertex of $V$ in $C\in \mathcal C_{G^*}$ (these counts are in $\mathbb N_{G,1}$)  then \(\varepsilon(\ell)\sim \operatorname{mult}_G(1,y)\).
\end{Theorem}
\begin{Remark}
	\label{rmk-non-decomposable-no-proba-inter}
 The result of Theorem \ref{thm-nm-proba-interpretation} indicates that \(\operatorname{nm}_G(1,x)\) corresponds to the random experiment of drawing elements \([w]\) from \(L\) with probability proportional to \(\pi_w(x)\).  Thus, in view of 1) of Corollary \ref{coro-sum-of-nm}, one can interpret \(\operatorname{nm}_G(r,x)\) as the distribution of sums of counts of all letters obtained in \(r\) independent repetitions of such experiment. Similarly, in view of 2) of Corollary \ref{coro-sum-of-nm}, one can interpret \(\operatorname{mult}_G(r,y)\) as the distribution of sums of counts of vertices in $r$ independent experiments described in Theorem \ref{thm-mult-proba-interpretation}.

In fact, \(\operatorname{mult}_G(1,y)\) corresponds to taking a sample of random independent set (hardcore model) of \(G\) with parameters \(y\); see the discussion below.
\end{Remark}

\subsection{Relation to hardcore model}
Given a graph \(G=(V,E)\) and parameters \(\lambda=(\lambda_{i})_{i\in V} \in (0,\infty)^V\), the hardcore model on \(G\) involves choosing a random independent set \(I\subset V\) (a set is independent if no two of its vertices are adjacent) with probability
\[ \frac{1}{Z_{G}(\lambda)} \prod_{i\in I}\lambda_{i}, \]
where the partition function of the hardcore model, \(Z_{G}(\lambda)\), is defined by
\[ Z_{G}(\lambda) = \sum_{I-\text{independent set}} \prod_{i\in  I}\lambda_{i}, \]
see e.g. \cite{Scott_2005}. 
By definition, an independent set \(I\) of \(G\) is a clique of \(G^{*}\), making the hardcore model naturally related to our discussion. In fact, \(Z_{G}(\lambda) = \delta_{G}(\lambda)\). The probabilistic interpretation of \(\operatorname{mult}_{G}(1,\lambda)\) in Theorem~\ref{thm-mult-proba-interpretation} is exactly the hardcore model. Therefore, \(\varepsilon(I)\), which counts the occurrences of each vertex in an independent set \(I\), follows \(\operatorname{mult}_{G}(1,\lambda)\). Similarly, \(\operatorname{mult}_{G}(r,\lambda)\) corresponds to sampling \(r\) independent hardcore models and counting the occurrences of each vertex.

In particular, the global graph Markov property of \(\operatorname{mult}_{G}\) on a decomposable graph implies that, for the hardcore model, conditioned on a clique \(C\) that separates \(A\) and \(B\) in $G$, the hardcore models viewed in \(A\) and \(B\) are independent. The graph negative inverted Dirichlet distribution provides a natural Bayesian conjugate prior distribution to the Gibbs measure of the hardcore model.

Shearer's condition (which strengthens the Lov\'asz local lemma \cite{Alon_2008}), equivalent to the absolute convergence of the Mayer expansion of \(\log Z_{G}(\lambda)\) (see \cite{Dobrushin_1996} and \cite{Scott_2005}), is particularly relevant.  Since one of Shearer's polynomials coincides with our clique polynomial \(\delta_{G}\), these results provide conditions for locating the zeroes of the clique polynomial \(\delta_{G}\) outside a certain ball. See also \cite{BISSACOT_2011} for an improvement of Lovász local lemma using cluster expansion. 
These results, which apply to general graphs, offer insights into the set \(M_{G}\) (recall \eqref{eq-support-MG} and the fact that $\Delta_G(x)=\delta_G(-x)$). Finally, we draw the reader’s attention to the algorithm introduced in \cite{Moser_2010}. 

\section{Graph Dirichlet and graph inverted Dirichlet laws}
\label{sec:orge36368d}
\subsection{Definitions, relations to Dirichlet/beta type distributions}
\label{sec:org9c5ed7a}
Now, we introduce two graph related Dirichlet-type distributions by the form of densities with respect to the Lebesgue measure on $\mathbb R^V$.
\begin{Definition}	\label{def-graph-dir-inverted-dir}
	Let \(G=(V,E)\) be a finite decomposable graph.
	\begin{enumerate}[label={\arabic*)}, itemjoin={,}, itemjoin*={, and}]
		\item A probability measure on \(\mathbb{R}^V\) is the \(G\)-Dirichlet distribution with parameters \((\alpha,\beta)\), where \(\alpha\in (0,\infty)^V\) and \(\beta > 0\), if it has the  density
		\begin{equation}
			\label{eq-density-G-Dir}
			\begin{aligned}
				f(x)=K_G(\alpha,\beta) \Delta_G(x)^{\beta-1}x^{\alpha-1} \mathbbm{1}_{M_G}(x),\qquad x\in \mathbb{R}^V,
			\end{aligned}
		\end{equation}
		where \(x^{\alpha-1}=\prod_{i\in V}x_i^{\alpha_i-1}\), the support \(M_G\) is defined in \eqref{eq-support-MG} and \(K_G(\alpha,\beta)\) is a normalizing constant. This measure is denoted by \(\operatorname{Dir}_G(\alpha,\beta)\).
		\item A probability measure on \(\mathbb{R}^V\) is the \(G\)-inverted Dirichlet distribution with parameters \((\alpha,\beta)\), where \(\alpha\in(0,\infty)^V\) and \(\beta > \max_{C\in \mathcal{C}_G^+}|\alpha_C|\), if it has the  density
		\begin{equation}
			\label{eq-density-G-iDir}
			\begin{aligned}
				f(y)=k_G(\alpha,\beta)\delta_G(y)^{-\beta} y^{\alpha-1} \mathbbm{1}_{(0,\infty)^V}(y),\qquad y\in \mathbb{R}^V,
			\end{aligned}
		\end{equation}
		where \(y^{\alpha-1}\) is \(\prod_{i\in V}y_i^{\alpha_i-1}\) and \(k_G(\alpha,\beta)\) is a normalizing constant. This measure is denoted by \(\operatorname{IDir}_{G}(\alpha,\beta)\).
	\end{enumerate}
\end{Definition}
In Theorem \ref{thm-norcons} we provide explicit expression for the normalizing constants \(K_G\) and \(k_G\). In particular, it means that these distributions are well-defined. The following examples justify the terminology in Definition \ref{def-graph-dir-inverted-dir}.

\begin{expl} 	\label{expl-g-dir-examples}$\,$
	\begin{enumerate}[label={\arabic*)}, itemjoin={,}, itemjoin*={, and}]
		\item If \(G\) is a complete graph, then \(\mathcal{C}_{G^*}=\{\emptyset, \ \{i\}, i\in V\}\), so
		\[\Delta_G(x)=1-|x_V|,\ \ \delta_G(y)=1+|y_V|\]
		with \(|x_V|=\sum_{i\in V}x_i\) and \(M_G\) is the unit simplex \(\{x\in (0,\infty)^V,\ |x_V| < 1\}\). Equations \eqref{eq-density-G-Dir} and \eqref{eq-density-G-iDir} become
		\[f_{\operatorname{Dir}_G}(x)=K_G(\alpha,\beta)\prod_{i\in V}x_i^{\alpha_i-1}(1-|x_V|)^{\beta-1} \mathbbm{1}_{M_G}(x)\]
		and
		\[f_{\operatorname{IDir}_G}(y)=k_G(\alpha,\beta) \prod_{i\in V} y_i^{\alpha_i-1}(1+|y_V|)^{-\beta} \mathbbm{1}_{(0,\infty)^V}(y).\]
		In the former formula we recognize  the density of the Dirichlet distribution with parameters \((\alpha,\beta)\), consequently, 
		\[K_G(\alpha,\beta)=\frac{\Gamma(|\alpha_V|+\beta)}{\Gamma(\beta)\prod_{i\in V}\Gamma(\alpha_i)},\]
		and in the latter the density of the inverted Dirichlet distribution with parameters \((\alpha,\beta)\),  consequently, 
		\[k_G(\alpha,\beta)=\frac{\Gamma(\beta)}{\Gamma(\beta-|\alpha_V|)\prod_{i\in V}\Gamma(\alpha_i)}.\]
		\item If \(G\) is the fully disconnected graph, then \(\mathcal{C}_{G^*}=\{A:\ A \subset  V\}\), so
		\[\Delta_G(x)=\prod_{i\in V}(1-x_i),\ \ \delta_G(y)=\prod_{i\in V}(1+y_i),\]
		and \(M_G=(0,1)^V\). Thus \eqref{eq-density-G-Dir} and \eqref{eq-density-G-iDir} become
		\[f_{\operatorname{Dir}_G}(x)=K_G(\alpha,\beta) \prod_{i\in V}\left[ x_i^{\alpha_i-1}(1-x_i)^{\beta-1} \mathbbm{1}_{(0,1)}(x_i) \right]\]
		and
		\[f_{\operatorname{IDir}_G}(y)=k_G(\alpha,\beta)\prod_{i\in V}\left[ y_i^{\alpha_i-1}(1+y_i)^{-\beta} \mathbbm{1}_{(0,\infty)}(y_i) \right].\]
		So \(\operatorname{Dir}_G(\alpha,\,\beta)\) is a product of independent univariate Beta distributions of the first kind,  \(\operatorname{B}_I(\alpha_i,\beta),\ i\in V\), and \(\operatorname{IDir}_G(\alpha,\,\beta)\) is a product of independent univariate Beta distribution of the second kind \(\operatorname{B}_{II}(\alpha_i,\beta-\alpha_i)\), $i\in V$. In particular,
		\[K_G(\alpha,\beta)=\prod_{i\in V} \frac{\Gamma(\alpha_i+\beta)}{\Gamma(\alpha_i)\Gamma(\beta)},\ \ \ k_G(\alpha,\beta)=\prod_{i\in V} \frac{\Gamma(\beta)}{\Gamma(\alpha_i)\Gamma(\beta-\alpha_i)}.\]
	\end{enumerate}
\end{expl}

\begin{Remark}\label{hyperDir}
	Another graph-related Dirichlet-type distribution known in the literature as the hyper-Dirichlet law was introduced for decomposable graphs in \cite{Dawid1993}. Its density is of the form
	$$
	f(x)\propto \tfrac{\prod_{C\in \mathcal C^+_G}\,\prod_{k_C\in\mathcal I_C}x_{k_C}^{\alpha_{k_C}}}{\prod_{S\in\mathcal S^-_G}\,\left(\prod_{k_S\in\mathcal I_S}\,x_{k_S}^{\alpha_{k_S}}\right)^{\nu_S}},
	$$
	where $\mathcal I_i$, $i\in V$, are finite sets with $s_i=|\mathcal I_i|\ge 2$, $i\in V$, $\mathcal I_A=\times_{i\in A}\,\mathcal I_i$,  $x_{k_A}^{\alpha_{k_A}}=\prod_{j\in A}\,x_{k_j}^{\alpha_{k_j}}$, $A\subset V$. The density $f$ is positive for all $x$'s from a certain manifold in a unit simplex $T_s$ of  dimension $s=\prod_{i\in V}\,s_i$.  It is worth to mention that the hyper-Dirichlet and $G$-Dirichlet are different distributions. In particular, the dimension of $G$-Dirichlet is $|V|$ while in the case of the hyper-Dirichlet it is $s\ge 2^{|V|}$.   Nevertheless,  they both reduce to Dirichlet distributions (of different dimensions), when $G$ is a complete graph.
\end{Remark}

\begin{Remark}[Relation between \(\operatorname{Dir}_G\) and \(\operatorname{IDir}_G\)]
	\label{rmk-no-relation-GDir-GIDir}
	The two types of Beta distributions are related via the Beta-Gamma algebra: if \(U,V\) are independent Gamma random variables,  then \(U/(U+V)\) is Beta of type $I$ and \(U/V\) is Beta of type $II$. More generally, the Dirichlet distribution and the inverted Dirichlet distribution are linked via the Beta-Gamma algebra. Specifically, let \(X_0,X_1 ,\ldots,X_N\) be independent Gamma random variables with the same scale parameter. Define \(Y_i=X_i/(\sum_{j=0}^N X_j)\) and \(Z_i=X_i/X_0\) for \(i=1,\ldots,N\). Then, \(Y=(Y_i)_{i=1}^N\) follows a Dirichlet distribution and \(Z=(Z_i)_{i=1}^N\) follows an inverted Dirichlet distribution. 
	
	One might wonder whether this relation can be generalized to the case of a decomposable graph. In general, this is not true. For instance, it can be verified that for the graph \(1-2-3\), it is impossible to couple these two models using independent Gamma random variables.
\end{Remark}

\subsection{Normalizing constants}
\label{sec:orgb65e48a}
\begin{Theorem}
	Let \(\mathcal{G}\) be a moral DAG with skeleton \(G\) with a parent function \(\mathfrak{pa}\). Let \(m\) be the number of connected components of \(G\).
	\begin{enumerate}[label={\arabic*)}, itemjoin={,}, itemjoin*={, and}]
		\item For \(\alpha\in (0,\infty)^V\) and \(\beta> 0\), we have
		\begin{equation}
			\label{eq-KG-per-i}
			\begin{aligned}
				K_G(\alpha,\beta)=\prod_{i\in V} \frac{\Gamma(|\alpha_{\overline{\mathfrak{pa}}(i)}|+\beta)}{\Gamma\left( |\alpha_{\mathfrak{pa}(i)}|+\beta \right) \Gamma(\alpha_i)},
			\end{aligned}
		\end{equation}
		which can be also written as
		\begin{equation}
			\label{eq-KG-C-S}
			\begin{aligned}
				K_G(\alpha,\beta)=\frac{\prod_{C\in \mathcal{C}_G^+}\Gamma(|\alpha_C|+\beta)}{\Gamma^m(\beta)\prod_{i\in V}\Gamma(\alpha_i) \prod_{S\in \mathcal{S}_G^-}\left(\Gamma(|\alpha_S|+\beta)\right)^{\nu_S}}.
			\end{aligned}
		\end{equation}
		\item For \(\alpha\in (0,\infty)^V\) and \(\beta > \max_{C\in \mathcal{C}_G^+}|\alpha_C|\), we have
		\begin{equation}
			\label{eq-kG-per-i}
			\begin{aligned}
				k_G(\alpha,\beta)=\prod_{i\in V} \frac{\Gamma(\beta-|\alpha_{\mathfrak{pa}(i)}|)}{\Gamma(\beta-|\alpha_{\overline{\mathfrak{pa}}(i)}|)\Gamma(\alpha_i)},
			\end{aligned}
		\end{equation}
		which can be also written as
		\begin{equation}
			\label{eq-kG-C-S}
			\begin{aligned}
				k_G(\alpha,\beta)=\frac{\Gamma(\beta)^m \prod_{S\in \mathcal{S}_G^-} \left(\Gamma(\beta-|\alpha_S|)\right)^{\nu_S}}{\prod_{i\in V}\Gamma(\alpha_i)\prod_{C\in \mathcal{C}_G^+}\Gamma(\beta-|\alpha_C|)}.
			\end{aligned}
		\end{equation}
	\end{enumerate}
	\label{thm-norcons}
\end{Theorem}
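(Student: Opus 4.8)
The plan is to reduce the computation of $K_G$ to an iterated one-dimensional integral using the factorizing coordinates $u^{\mathcal G}$ from Definition \ref{def-factorizing-coordinates}, and symmetrically for $k_G$ using $w^{\mathcal G}$. First I would use the change of variables $x\mapsto u=u^{\mathcal G}(x)$, which by Lemma \ref{lem-clique-poly-new-coord} is a diffeomorphism $M_G\to(0,1)^V$ with inverse $x^{\mathcal G}_i(u)=u_i\prod_{j\in\mathfrak{ch}(i)}(1-u_j)$. Under this substitution $\Delta_G(x)=\prod_{i\in V}(1-u_i)$ by \eqref{eq-deltaG}, so the factor $\Delta_G(x)^{\beta-1}$ becomes $\prod_i(1-u_i)^{\beta-1}$; the monomial $x^{\alpha-1}$ becomes $\prod_i\bigl(u_i\prod_{j\in\mathfrak{ch}(i)}(1-u_j)\bigr)^{\alpha_i-1}$; and I must compute the Jacobian of $x^{\mathcal G}$. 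The key structural point is that, after reordering the vertices by a perfect elimination order so that $\mathfrak{ch}(i)\subset\{i+1,\dots,|V|\}$, the map $u\mapsto x$ is triangular, hence the Jacobian determinant is the product of diagonal entries $\partial x_i/\partial u_i=\prod_{j\in\mathfrak{ch}(i)}(1-u_j)$. Collecting the exponent of each factor $(1-u_j)$: it receives $\beta-1$ from $\Delta_G$, it receives $\alpha_i-1$ for each parent $i\in\mathfrak{pa}(j)$ from the monomial, and $1$ for each such parent from the Jacobian; so the total exponent of $(1-u_j)$ is $\beta-1+\sum_{i\in\mathfrak{pa}(j)}\alpha_i=\beta-1+|\alpha_{\mathfrak{pa}(j)}|$, while the exponent of $u_j$ is $\alpha_j-1$. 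Thus the integral factorizes completely:
\[
\frac1{K_G(\alpha,\beta)}=\prod_{j\in V}\int_0^1 u_j^{\alpha_j-1}(1-u_j)^{\beta-1+|\alpha_{\mathfrak{pa}(j)}|}\,du_j=\prod_{j\in V} B\bigl(\alpha_j,\beta+|\alpha_{\mathfrak{pa}(j)}|\bigr),
\]
which gives \eqref{eq-KG-per-i} after writing the Beta function via Gamma functions and noting $\alpha_j+|\alpha_{\mathfrak{pa}(j)}|=|\alpha_{\overline{\mathfrak{pa}}(j)}|$.

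For part 2, I would run the same argument with $y\mapsto w=w^{\mathcal G}(y)$: by Lemma \ref{lem-clique-poly-new-coord} this is a diffeomorphism $(0,\infty)^V\to(0,\infty)^V$ with $y^{\mathcal G}_i(w)=w_i\prod_{j\in\mathfrak{ch}(i)}(1+w_j)$ and $\delta_G(y)=\prod_i(1+w_i)$. The same triangular-Jacobian bookkeeping now produces, for each $j$, exponent $\alpha_j-1$ on $w_j$ and exponent $-\beta+\sum_{i\in\mathfrak{pa}(j)}(\alpha_i-1)+\deg\text{-of-Jacobian contributions}$ — more precisely the total exponent of $(1+w_j)$ is $-\beta+|\alpha_{\mathfrak{pa}(j)}|$, so
\[
\frac1{k_G(\alpha,\beta)}=\prod_{j\in V}\int_0^\infty w_j^{\alpha_j-1}(1+w_j)^{-\beta+|\alpha_{\mathfrak{pa}(j)}|}\,dw_j=\prod_{j\in V} B\bigl(\alpha_j,\beta-|\alpha_{\overline{\mathfrak{pa}}(j)}|\bigr),
\]
the second-kind Beta integral, which converges precisely when $\beta-|\alpha_{\overline{\mathfrak{pa}}(j)}|>0$ for every $j$; since $\overline{\mathfrak{pa}}(j)$ is a clique contained in some maximal clique, this condition is implied by (and in fact, ranging over moral DAGs and their sinks, equivalent to) $\beta>\max_{C\in\mathcal C_G^+}|\alpha_C|$. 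This yields \eqref{eq-kG-per-i}.

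Finally, to pass from the per-vertex product \eqref{eq-KG-per-i} to the clique/separator form \eqref{eq-KG-C-S}, I would telescope. The factor $\prod_{i\in V}\Gamma(|\alpha_{\overline{\mathfrak{pa}}(i)}|+\beta)/\Gamma(|\alpha_{\mathfrak{pa}(i)}|+\beta)$: ordering vertices along a perfect elimination order and grouping each vertex with the maximal clique it ``completes,'' the numerators at the vertices filling out a maximal clique $C$ assemble (after cancellation of intermediate terms within $C$) into $\Gamma(|\alpha_C|+\beta)$, the denominators at the ``first'' vertex of each clique beyond the first contribute $\Gamma(|\alpha_{S}|+\beta)$ for the corresponding minimal separator $S$, and each of the $m$ connected components contributes one leftover $\Gamma(\beta)$ in the denominator (from the root vertex, whose parent set is empty). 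This is exactly the standard decomposable-graph telescoping identity, and it can be proved cleanly by induction on $|\mathcal C_G^+|$ using a simplicial vertex / leaf clique, or alternatively by invoking the equivalence of \eqref{fact:directed} and \eqref{fact:undirected} applied to the $\operatorname{Dir}_G$ density itself. The analogous telescoping gives \eqref{eq-kG-C-S} from \eqref{eq-kG-per-i}.

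The main obstacle is the Jacobian computation and the careful exponent bookkeeping: one must verify that in factorizing coordinates the transformation is genuinely triangular with respect to a perfect elimination order (so that $\frac{\partial x_i}{\partial u_i}=\prod_{j\in\mathfrak{ch}(i)}(1-u_j)$ with no cross terms surviving on the diagonal), and then track precisely how many times each factor $(1-u_j)$ (resp.\ $(1+w_j)$) arises across $\Delta_G^{\beta-1}$ (resp.\ $\delta_G^{-\beta}$), the monomial $x^{\alpha-1}$ (resp.\ $y^{\alpha-1}$), and the Jacobian — getting the "$-1$"s to cancel correctly between the monomial and the Jacobian is where sign/off-by-one errors are most likely. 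The telescoping step, by contrast, is routine given the machinery already in the paper, and the convergence/well-definedness claims fall out of the explicit Beta integrals.
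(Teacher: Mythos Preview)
Your proposal is correct and follows essentially the same route as the paper: the paper obtains \eqref{eq-KG-per-i} and \eqref{eq-kG-per-i} precisely by the change of variables $x\mapsto u^{\mathcal G}$ (resp.\ $y\mapsto w^{\mathcal G}$), computing the triangular Jacobian and collecting exponents exactly as you describe (this computation is packaged in the proof of Theorem~\ref{thm-independent-u-w} and then cited), and then passes to \eqref{eq-KG-C-S} by the same telescoping idea, carried out as an induction on $|V|$ via removal of a sink vertex with a short case split on whether $\mathfrak{pa}(i_0)$ is empty, a maximal clique of $G'$, or a non-maximal clique of $G'$.
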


\subsection{Independence structure}
\label{sec:org6c5fe5d}
\begin{Theorem}
	Let \(\mathcal{G}\) be a moral DAG with skeleton \(G\) with a parent function \(\mathfrak{pa}\).
	\begin{enumerate}[label={\arabic*)}, itemjoin={,}, itemjoin*={, and}]
		\item If \(X\sim \operatorname{Dir}_G(\alpha,\beta)\), and \(U^{\mathcal{G}}=u^{\mathcal{G}}(X)\) as defined in \eqref{eq-u-calG}, then random variables \(U^{\mathcal{G}}_i,\;i\in V\)  are independent. Moreover, for every \(i\in V\),
		\[U^{\mathcal{G}}_i \sim \operatorname{B}_I(\alpha_i,\beta+|\alpha_{\mathfrak{pa}(i)}|).\]
		\item If \(Y\sim \operatorname{IDir}_G(\alpha,\beta)\), and \(W^{\mathcal{G}}=w^{\mathcal{G}}(Y)\) as defined in \eqref{eq-w-calG}, then  random variables  \(W^{\mathcal{G}}_i,\;i\in V\)  are independent. Moreover, for every \(i\in V\),
		\[W^{\mathcal{G}}_i \sim \operatorname{B}_{II}(\alpha_i,\beta-|\alpha_{\overline{\mathfrak{pa}}(i)}|).\]
	\end{enumerate}
	\label{thm-independent-u-w}
\end{Theorem}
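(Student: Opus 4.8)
The plan is to prove both parts simultaneously by a change-of-variables computation, using the factorizing coordinates from Lemma~\ref{lem-clique-poly-new-coord}. I would proceed by induction on a perfect elimination order of $G$, peeling off one simplicial vertex at a time; the sink vertices of the moral DAG $\mathcal G$ are exactly the candidates for such a vertex, and for a sink $i_0$ we have $\mathfrak{ch}(i_0)=\emptyset$, $\overline{\mathfrak{de}}(i_0)=\{i_0\}$, $\mathfrak{de}(i_0)=\emptyset$, so $u^{\mathcal G}_{i_0}=1-\Delta_{\{i_0\}}(x)=x_{i_0}$ by \eqref{eq-u-calG} (and likewise $w^{\mathcal G}_{i_0}=y_{i_0}$). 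Actually it is cleaner to remove a \emph{source}-type vertex of $\mathcal G$, i.e.\ a simplicial vertex $i_0$ of $G$ such that the induced orientation makes all its DAG-edges point outward; for such $i_0$ the formula \eqref{eq:Deltax} gives $\Delta_G(x)=(1-x_{i_0})\Delta_{\tilde G}(\tilde x)$ with $\tilde x_i=x_i/(1-x_{i_0})$ on $\mathfrak{nb}_G(i_0)$, which is precisely the substitution that will disentangle $x_{i_0}$. So the first step is: fix such an $i_0$, and compute the Jacobian of the map $x\mapsto(x_{i_0},\tilde x_{\tilde V})$, which is triangular with determinant $(1-x_{i_0})^{-|\mathfrak{nb}_G(i_0)|}$.

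The second step is to substitute into the density \eqref{eq-density-G-Dir}. Using $\Delta_G(x)^{\beta-1}=(1-x_{i_0})^{\beta-1}\Delta_{\tilde G}(\tilde x)^{\beta-1}$, the monomial $x^{\alpha-1}=x_{i_0}^{\alpha_{i_0}-1}\prod_{i\neq i_0}x_i^{\alpha_i-1}$ with $x_i=(1-x_{i_0})\tilde x_i$ for $i\in\mathfrak{nb}_G(i_0)$ and $x_i=\tilde x_i$ otherwise, and multiplying by the Jacobian factor, the powers of $(1-x_{i_0})$ collect to $(1-x_{i_0})^{\beta-1+|\alpha_{\mathfrak{nb}_G(i_0)}|-|\mathfrak{nb}_G(i_0)|}=(1-x_{i_0})^{(\beta+|\alpha_{\mathfrak{pa}(i_0)}|)-1}$, where we use that for a source-type $i_0$ one has $\mathfrak{pa}(i_0)=\emptyset$ in $\mathcal G$ but $\mathfrak{pa}(i)\ni i_0$ for $i\in\mathfrak{ch}(i_0)=\mathfrak{nb}_G(i_0)$, so the exponent bookkeeping must be set up to match the parent function of the \emph{induced} DAG $\mathcal G_{\tilde V}$. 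Thus the density factors as a $\operatorname{B}_I(\alpha_{i_0},\beta+|\alpha_{\mathfrak{pa}(i_0)}|)$ density in $x_{i_0}$ times a $\operatorname{Dir}_{\tilde G}(\tilde\alpha,\beta)$-type density in $\tilde x$ with the \emph{same} $\beta$ but $\tilde G$ having one fewer vertex; here I would invoke \eqref{eq-DeltaA} to identify $\Delta_{\tilde G}$ with the restriction, and Theorem~\ref{thm-norcons} (or rather derive the constant's recursion along the way) to confirm the normalizing constants split correctly. By the induction hypothesis applied to $\mathcal G_{\tilde V}$, the remaining coordinates $U^{\mathcal G_{\tilde V}}_i=u^{\mathcal G_{\tilde V}}_i(\tilde x)$, $i\in\tilde V$, are independent $\operatorname{B}_I$ as claimed, and it remains to check that $u^{\mathcal G}_i(x)=u^{\mathcal G_{\tilde V}}_i(\tilde x)$ for $i\neq i_0$ — this is exactly the content of Remark~\ref{rmk-other-def-u-w} combined with \eqref{eq-DeltaA}, since removing $i_0$ from $G$ does not change $\mathfrak{de}(i)$ or $\mathfrak{ch}(i)$ for descendants of $i$ not equal to $i_0$, and $\Delta$ on those descendant-sets is unaffected. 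Part~2 for $\operatorname{IDir}_G$ is entirely parallel, using the second identity in \eqref{eq:Deltax}, i.e.\ $\delta_G(y)=(1+y_{i_0})\Delta_{\tilde G}(\tilde y)$ with $\tilde y_i=y_i/(1+y_{i_0})$, the Jacobian $(1+y_{i_0})^{-|\mathfrak{nb}_G(i_0)|}$, and the constraint $\beta>\max_C|\alpha_C|$ being preserved under vertex removal; the marginal of $y_{i_0}$ comes out $\operatorname{B}_{II}(\alpha_{i_0},\beta-|\alpha_{\overline{\mathfrak{pa}}(i_0)}|)$ once one matches the parent function of $\mathcal G_{\tilde V}$.

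Alternatively — and this may be the slicker route to write up — one can avoid induction entirely: by Lemma~\ref{lem-clique-poly-new-coord}, $u^{\mathcal G}$ is a global diffeomorphism $M_G\to(0,1)^V$ with inverse \eqref{eq-x-in-u}, so I would compute the Jacobian of $x^{\mathcal G}$ directly. Since $x^{\mathcal G}_i(u)=u_i\prod_{j\in\mathfrak{ch}(i)}(1-u_j)$ and $\mathcal G$ is acyclic, ordering the vertices topologically makes $\partial x_i/\partial u_j$ upper-triangular (a child is ``later''), so the Jacobian determinant is $\prod_{i\in V}\partial x^{\mathcal G}_i/\partial u_i=\prod_{i\in V}\prod_{j\in\mathfrak{ch}(i)}(1-u_j)=\prod_{j\in V}(1-u_j)^{|\mathfrak{pa}(j)|}$. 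Then substitute $x^{\alpha-1}$ and $\Delta_G^{\beta-1}=\prod_i(1-u_i)^{\beta-1}$ (from \eqref{eq-deltaG}) into the density and read off the power of $(1-u_i)$ for each $i$: from $\prod_{j\in\mathfrak{ch}(i)}$ in the monomials of the children we get $\sum_{j\in\mathfrak{ch}(i)}\alpha_j=|\alpha_{\mathfrak{ch}(i)}|$, wait — more carefully, $x_j^{\alpha_j-1}$ contributes $(1-u_i)^{\alpha_j-1}$ for each $j$ with $i\in\mathfrak{ch}^{-1}(j)=\mathfrak{pa}(j)$... the exponent of $(1-u_i)$ totals $(\beta-1)+\sum_{j:\,i\in\mathfrak{pa}(j)}(\alpha_j-1)+|\mathfrak{pa}(i)|$ from the Jacobian's $(1-u_i)^{|\mathfrak{pa}(i)|}$ — and one checks this equals $(\beta+|\alpha_{\mathfrak{pa}(i)}|)-1$ using $|\{j:i\in\mathfrak{pa}(j)\}|=|\mathfrak{ch}(i)|$ and the fact that $|\alpha_{\mathfrak{pa}(j)}|$ summed appropriately telescopes — this is the one place where the combinatorial bookkeeping must be done with care, and I expect the exponent-matching identity $\sum_{i\in V}\big[(\beta+|\alpha_{\mathfrak{pa}(i)}|)-1\big]=\sum_{i\in V}(\alpha_i-1)+|V|(\beta-1)+\sum_i|\mathfrak{pa}(i)|$ (consistency of total degree) to be the sanity check. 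The product then visibly factors as $\prod_{i\in V}$ of $\operatorname{B}_I(\alpha_i,\beta+|\alpha_{\mathfrak{pa}(i)}|)$ densities in the $u_i$, and independence is immediate; matching the product of the individual Beta normalizing constants against $K_G(\alpha,\beta)$ recovers \eqref{eq-KG-per-i}, giving a clean simultaneous proof of this theorem and Theorem~\ref{thm-norcons}. The main obstacle in either approach is purely the exponent/parent-function bookkeeping when a vertex is removed or when children-contributions are collected; the change of variables and the diffeomorphism property are already handed to us by Lemma~\ref{lem-clique-poly-new-coord}.
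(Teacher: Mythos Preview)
Your second (``slicker'') approach is exactly the paper's proof: compute the Jacobian of $u\mapsto x^{\mathcal G}(u)$ as $\prod_{i\in V}(1-u_i)^{|\mathfrak{pa}(i)|}$, substitute $\Delta_G^{\beta-1}=\prod_i(1-u_i)^{\beta-1}$ and $x_i^{\alpha_i-1}=u_i^{\alpha_i-1}\prod_{j\in\mathfrak{ch}(i)}(1-u_j)^{\alpha_i-1}$, and read off the product of Beta densities. Your inductive first route would also work but is unnecessary overhead.

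There is one genuine slip in your bookkeeping that you should fix rather than wave through. You write that $x_j^{\alpha_j-1}$ contributes $(1-u_i)^{\alpha_j-1}$ ``for each $j$ with $i\in\mathfrak{pa}(j)$'', and then carry the sum $\sum_{j:\,i\in\mathfrak{pa}(j)}(\alpha_j-1)$. This is backwards: from $x_j=u_j\prod_{k\in\mathfrak{ch}(j)}(1-u_k)$, the factor $(1-u_i)$ appears in $x_j^{\alpha_j-1}$ precisely when $i\in\mathfrak{ch}(j)$, i.e.\ when $j\in\mathfrak{pa}(i)$. So the correct exponent of $(1-u_i)$ is
\[
(\beta-1)\;+\;\sum_{j\in\mathfrak{pa}(i)}(\alpha_j-1)\;+\;|\mathfrak{pa}(i)|
\;=\;(\beta-1)+|\alpha_{\mathfrak{pa}(i)}|-|\mathfrak{pa}(i)|+|\mathfrak{pa}(i)|
\;=\;\beta+|\alpha_{\mathfrak{pa}(i)}|-1,
\]
with no telescoping or ``consistency of total degree'' argument needed. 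Your expression, summing over $j\in\mathfrak{ch}(i)$, would give $|\alpha_{\mathfrak{ch}(i)}|$ in place of $|\alpha_{\mathfrak{pa}(i)}|$ and would not match the claimed Beta parameters. Once this is corrected the computation is a two-line substitution, exactly as in the paper, and the factorization over $i\in V$ is immediate; the $\operatorname{IDir}_G$ case is the same with $(1-u_i)$ replaced by $(1+w_i)$ and the exponent of $(1+w_i)$ coming out to $|\alpha_{\mathfrak{pa}(i)}|-\beta$.
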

\begin{Definition}\label{def-G-Markov}	
	Let \(G=(V,E)\) be a simple undirected graph, we say that a random vector \(X=(X_i)_{i\in V}\) is \(G\)-one-dependent  if, for all \(A,B\) disconnected in \(G\), i.e. there is no edge between vertices in \(A\) and vertices in \(B\), we have
	\[X_A \text{ and }X_B \text{ are independent.}\]
\end{Definition}
\begin{Theorem}\label{thm-G-Markov-GDir}
	Let \(G\) be a finite decomposable graph. If  the distribution of \(X\) is either \(\operatorname{Dir}_G\) or \(\operatorname{IDir}_G\), then \(X\) is \(G\)-one-dependent.
\end{Theorem}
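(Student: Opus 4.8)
\emph{Proof idea.} The plan is to pass to the factorizing coordinates of an \emph{arbitrary} moral DAG $\mathcal{G}$ with skeleton $G$ (one exists since $G$ is decomposable) and to exploit that these coordinates have independent components; this turns $G$-one-dependence into a one-line combinatorial fact about moral DAGs, with all analytic content delegated to Theorem \ref{thm-independent-u-w} and Lemma \ref{lem-clique-poly-new-coord}.

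Fix disjoint $A,B\subseteq V$ with no edge between them, and fix any moral DAG $\mathcal{G}$ with skeleton $G$. Suppose first $X\sim\operatorname{Dir}_G(\alpha,\beta)$ and set $U=u^{\mathcal{G}}(X)$. Since $u^{\mathcal{G}}$ is a diffeomorphism with inverse $x^{\mathcal{G}}$ (Lemma \ref{lem-clique-poly-new-coord}), we have $X_i=x^{\mathcal{G}}_i(U)=U_i\prod_{j\in\mathfrak{ch}(i)}(1-U_j)$ by \eqref{eq-x-in-u}, so each $X_i$ is a measurable function of $(U_k)_{k\in\overline{\mathfrak{ch}}(i)}$ only. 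Writing $\mathcal{A}=\bigcup_{i\in A}\overline{\mathfrak{ch}}(i)$ and $\mathcal{B}=\bigcup_{j\in B}\overline{\mathfrak{ch}}(j)$, it follows that $X_A$ is $\sigma\big((U_k)_{k\in\mathcal{A}}\big)$-measurable and $X_B$ is $\sigma\big((U_k)_{k\in\mathcal{B}}\big)$-measurable. By Theorem \ref{thm-independent-u-w} the variables $U_k$, $k\in V$, are independent, so once $\mathcal{A}\cap\mathcal{B}=\emptyset$ is established we obtain independence of $X_A$ and $X_B$.

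The crux, and the only step that needs an idea, is this disjointness, and it is exactly here that morality of $\mathcal{G}$ enters. Because $\mathfrak{ch}(i)\subseteq\mathfrak{nb}_G(i)$ and there is no edge between $A$ and $B$, the set $\bigcup_{i\in A}\mathfrak{ch}(i)$ misses $B$ and $\bigcup_{j\in B}\mathfrak{ch}(j)$ misses $A$; combined with $A\cap B=\emptyset$, disjointness of $\mathcal{A}$ and $\mathcal{B}$ reduces to $\big(\bigcup_{i\in A}\mathfrak{ch}(i)\big)\cap\big(\bigcup_{j\in B}\mathfrak{ch}(j)\big)=\emptyset$. If a vertex $v$ lay in this intersection, it would have a parent in $A$ and a parent in $B$; but in a moral DAG any two parents of a vertex are adjacent in $G$, contradicting that $A$ and $B$ are disconnected. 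Hence $\mathcal{A}\cap\mathcal{B}=\emptyset$, which finishes the $\operatorname{Dir}_G$ case.

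For $X\sim\operatorname{IDir}_G(\alpha,\beta)$ the proof is verbatim the same after replacing $u^{\mathcal{G}}$, $x^{\mathcal{G}}$ and \eqref{eq-x-in-u} by $w^{\mathcal{G}}$, $y^{\mathcal{G}}$ and \eqref{eq-y-in-w}: with $W=w^{\mathcal{G}}(X)$ one has $X_i=y^{\mathcal{G}}_i(W)=W_i\prod_{j\in\mathfrak{ch}(i)}(1+W_j)$, the $W_k$ are independent by Theorem \ref{thm-independent-u-w}, and the same combinatorial disjointness of $\mathcal{A}$ and $\mathcal{B}$ concludes. Thus I expect the only genuine obstacle to be isolating and proving the morality-based combinatorial claim; the remaining measurability bookkeeping is routine.
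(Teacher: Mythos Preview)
Your proof is correct and follows essentially the same approach as the paper: pass to factorizing coordinates $U=u^{\mathcal G}(X)$ via a moral DAG, use Theorem \ref{thm-independent-u-w} for independence of the $U_k$, and invoke morality to ensure that $X_A$ and $X_B$ are functions of disjoint subfamilies of $(U_k)_{k\in V}$. If anything, your write-up is slightly more thorough than the paper's, which argues only for a single pair $i\in A$, $j\in B$ and leaves the passage to the full vectors $X_A$, $X_B$ implicit; your explicit verification that $\mathcal A\cap\mathcal B=\emptyset$ covers this cleanly.
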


One may expect that if $X\sim \operatorname{Dir}_G$ and $C\subset V$, then $X_C\sim \operatorname{Dir}_{G_C}$, in particular $X_C$ should have Dirichlet distribution for $C\in\mathcal{C}_G$. However, this is not always the case. In particular, if $G=1-2-3$ and $X\sim\operatorname{Dir}_G(\alpha,\beta)$, then
\begin{align*}
f_{(X_1,X_2)}(x_1,x_2)&\propto x_1^{\alpha_1-1} x_2^{\alpha_2-1} \frac{(1-x_1-x_2)^{\alpha_3+\beta-1}}{(1-x_1)^{\alpha_3}} \mathbbm{1}_{0<x_1, x_2, x_1+x_2<1}\\
&\propto \frac{ f_{\operatorname{Dir}_{G_{\{1,2\}}}(\alpha_{\{1,2\}},\beta+\alpha_3)}(x_1,x_2)}{(1-x_1)^{\alpha_3}}.
\end{align*}
Below we present a surrogate.  Recall the definition of the mappings $M_G\ni x\mapsto x^C$ and $(0,\infty)^V\ni y\mapsto y^C$ from \eqref{eq-xC} and \eqref{eq-yC}. 
\begin{Theorem}\label{thm:dirC}
	Let $G$ be a  finite decomposable graph and $C\in\mathcal{C}_G$.
	\begin{enumerate}[label={\arabic*)}, itemjoin={,}, itemjoin*={, and}]
		\item 	If $X\sim \operatorname{Dir}_G(\alpha,\beta)$, then  $X^C\sim\operatorname{Dir}_{K_C}(\alpha_C,\beta)$.  Moreover, 
		$X_{V\setminus C}$ and $X^C$ are independent.
		\item 	If $Y\sim \operatorname{IDir}_G(\alpha,\beta)$, then $Y^C\sim\operatorname{IDir}_{K_C}(\alpha_C,\beta)$.  Moreover, $Y_{V\setminus C}$ and $Y^C$ are independent.
	\end{enumerate}
\end{Theorem}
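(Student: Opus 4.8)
### Proof proposal for Theorem \ref{thm:dirC}

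\textbf{Strategy.} The plan is to prove statement 1) for $\operatorname{Dir}_G$ and then obtain 2) by the same argument with $\Delta$ replaced by $\delta$ and sign changes in the exponents; alternatively 2) can be deduced from 1) by a formal substitution $x \mapsto -y$ as used elsewhere in the paper. The key idea is to realize the map $x \mapsto (x_{V\setminus C}, x^C)$ as a change of variables on $M_G$ whose Jacobian factorizes, and under which the density $\Delta_G(x)^{\beta-1}x^{\alpha-1}$ splits into a product of a function of $x_{V\setminus C}$ and the density of $\operatorname{Dir}_{K_C}(\alpha_C,\beta)$ in the variable $x^C$.

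\textbf{Key steps.} First I would reduce to the case where $C$ is a single vertex by induction. Indeed, by \eqref{eq-Delta-G-C} applied repeatedly (peeling off vertices of $C$ one at a time, which is possible since $C$ is a clique, hence every element of $C$ is simplicial in $G_{(V\setminus C)\cup\{i\}}$), one can hope to factor the passage $x \mapsto x^C$ through successive single-vertex reductions; the formula \eqref{eq-xC} for $x^C_i$ together with \eqref{eq:Deltax} suggests the right recursive structure. For a single vertex $i_0 \in V$ that is simplicial in $G$, set $\tilde V = V\setminus\{i_0\}$, $\tilde G = G_{\tilde V}$, and use part 4) of Lemma \ref{lem-properties-of-clique-polynomials}: with $\tilde x$ as defined there, $\Delta_G(x) = (1-x_{i_0})\Delta_{\tilde G}(\tilde x)$. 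Observe that the Beta-type variable $x^{\{i_0\}}_{i_0} = x_{i_0}\Delta_{\mathfrak{nb}_{G^*}(i_0)}(x)/\Delta_{V\setminus\{i_0\}}(x)$ from \eqref{eq-xC} coincides (using \eqref{eq-Delta_GV-v}) with $u^{\mathcal G}_{i_0}$ for a moral DAG $\mathcal G$ in which $i_0$ is a sink; and by Theorem \ref{thm-independent-u-w} that variable is independent of the remaining coordinates and $\operatorname{B}_I(\alpha_{i_0},\beta)$ (note $\mathfrak{pa}(i_0) = \emptyset$ when $i_0$ is a sink). Then I would change variables from $x_{\tilde V}$ to $\tilde x \in M_{\tilde G}$, check from \eqref{eq-density-G-Dir} that the conditional law of $\tilde x$ given $x^{\{i_0\}}_{i_0}$ is exactly $\operatorname{Dir}_{\tilde G}(\alpha_{\tilde V},\beta)$, and invoke the induction hypothesis on $\tilde G$ with the clique $C\setminus\{i_0\}$ to extract $\operatorname{Dir}_{K_{C\setminus\{i_0\}}}((\alpha_C)_{C\setminus\{i_0\}}, \beta)$ from $\tilde x$; finally I would verify that the clique-marginal map $\tilde x \mapsto \tilde x^{C\setminus\{i_0\}}$ composed with the earlier substitution reproduces $x \mapsto x^C$ on the coordinates in $C\setminus\{i_0\}$, and that $x^C_{i_0} = x^{\{i_0\}}_{i_0}$ is unaffected, so that $x^C \sim \operatorname{Dir}_{K_C}(\alpha_C,\beta)$ with the right independence. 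The base case $|C| = 0$ or $|C| = 1$ is immediate (for $|C|=1$ it is the single-vertex computation above; for $C=\emptyset$ there is nothing to prove).

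\textbf{Alternative / cleaner route.} Rather than the explicit Jacobian bookkeeping, I would more likely phrase the whole argument through factorizing coordinates: pick a moral DAG $\mathcal G$ with skeleton $G$ obtained from a perfect elimination order whose first $|C|$ vertices are exactly the elements of $C$ (possible since $C$ is a clique and hence extends to a perfect elimination order with $C$ eliminated first, so in $\mathcal G$ the vertices of $C$ form a ``downstream'' complete subgraph with $\mathfrak{ch}$-edges only among themselves and into $V\setminus C$). By Theorem \ref{thm-independent-u-w} the coordinates $U^{\mathcal G}_i$, $i\in V$, are independent, with $U^{\mathcal G}_i \sim \operatorname{B}_I(\alpha_i, \beta + |\alpha_{\mathfrak{pa}(i)}|)$; restricting to $i\in C$ and using \eqref{eq-x-in-u} together with the fact that $\mathfrak{ch}(i)\cap C$ for $i\in C$ realizes the clique structure, one identifies $(U^{\mathcal G}_i)_{i\in C}$ as the factorizing coordinates of $\operatorname{Dir}_{K_C}(\alpha_C,\beta)$; hence $x^C$, being the inverse image of these coordinates under $x^{\mathcal G_C}$ (by Remark \ref{rmk-other-def-u-w} and \eqref{eq-xC}), has law $\operatorname{Dir}_{K_C}(\alpha_C,\beta)$. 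Independence of $X^C$ and $X_{V\setminus C}$ then follows because $X^C$ is a function of $(U^{\mathcal G}_i)_{i\in C}$ only while $X_{V\setminus C}$ is a function of all the $U^{\mathcal G}_i$; one must check that in fact $X_{V\setminus C}$ depends on $(U^{\mathcal G}_i)_{i\in C}$ only through quantities already determined — this is the point requiring care, and it works because for $j\in V\setminus C$, $\mathfrak{ch}_{\mathcal G}(j)\subset V\setminus C$ (no edges go from $V\setminus C$ ``down into'' $C$ with this ordering), so by \eqref{eq-x-in-u} $x_j$ is a function of $(U^{\mathcal G}_k)_{k\in V\setminus C}$ alone.

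\textbf{Main obstacle.} The delicate point is the bookkeeping that shows the map $x \mapsto x^C$ really is the composition of the factorizing-coordinate map with a projection, i.e. that $x^C_i = U^{\mathcal G}_i \prod_{j\in \mathfrak{ch}(i)\cap C}(1 - U^{\mathcal G}_j)$ for $i\in C$ — equivalently, that the ``external'' children $j\in \mathfrak{ch}(i)\setminus C$ do not contribute. This must be reconciled with formula \eqref{eq-xC}; concretely one needs the identity $\Delta_{\mathfrak{nb}_{G^*}(i)}(x)/\Delta_{V\setminus C}(x) = \big(\prod_{j\in\mathfrak{ch}(i)\setminus C}(1-u^{\mathcal G}_j)\big)^{-1}\cdot(\text{factor cancelling against }x_i)$, which should follow from Lemma \ref{lem-clique-poly-new-coord} and the structure of $\mathcal G$, but writing it out carefully is the crux. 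Everything else is a routine density computation or a direct appeal to Theorems \ref{thm-independent-u-w} and \ref{thm-norcons}.
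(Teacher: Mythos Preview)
Both of your approaches share a fatal assumption: that you can find a perfect elimination order (equivalently, a moral DAG) in which the vertices of $C$ are eliminated \emph{first}, i.e.\ are sinks. This requires some vertex of $C$ to be simplicial in $G$, which is simply false in general. Take $G=1\!-\!2\!-\!3$ and $C=\{2\}$: vertex $2$ has neighbours $\{1,3\}$, not a clique, so no PEO starts at $2$ and no moral DAG has $2$ as a sink. Your inductive peeling ("every element of $C$ is simplicial in $G_{(V\setminus C)\cup\{i\}}$") fails for exactly the same reason, since for $C=\{2\}$ that graph is $G$ itself. With the orientation you chose, even when the DAG exists the bookkeeping goes the wrong way: for $i\in C$ you would get $\mathfrak{pa}(i)$ reaching into $V\setminus C$, so $U^{\mathcal G}_i\sim\operatorname{B}_I(\alpha_i,\beta+|\alpha_{\mathfrak{pa}(i)}|)$ would involve $\alpha$-values outside $C$, and $X_{V\setminus C}$ would \emph{not} be a function of $(U^{\mathcal G}_k)_{k\in V\setminus C}$ alone.

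Your factorizing-coordinates idea can be rescued by reversing the order: take a PEO with $C$ \emph{last} (this always exists: if $G$ is not complete there are two non-adjacent simplicial vertices, at least one outside the clique $C$; remove it and iterate). Then $\mathfrak{pa}(i)\subset C$ for $i\in C$ and $\mathfrak{ch}(j)\subset V\setminus C$ for $j\in V\setminus C$, so $X_{V\setminus C}$ depends only on $(U^{\mathcal G}_k)_{k\in V\setminus C}$, and one checks that $X^C$ is exactly the inverse factorizing map for $K_C$ applied to $(U^{\mathcal G}_k)_{k\in C}$, yielding $\operatorname{Dir}_{K_C}(\alpha_C,\beta)$. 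But you still owe the identity $x^C_i = u^{\mathcal G}_i\prod_{k\in\mathfrak{ch}(i)\cap C}(1-u^{\mathcal G}_k)$ for $i\in C$, which is the real content.

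The paper sidesteps all of this with a direct, single change of variables $\psi_C\colon x\mapsto(x^C,x_{V\setminus C})$. The key algebraic input is that for \emph{any} clique $C$, equation \eqref{eq-Delta-G-C} gives
\[
\Delta_G(x)=\Delta_{V\setminus C}(x)\Bigl(1-\sum_{i\in C}x^C_i\Bigr)=\Delta_{G_{V\setminus C}}(x_{V\setminus C})\,\Delta_{K_C}(x^C),
\]
so $\psi_C$ is a diffeomorphism $M_G\to M_{K_C}\times M_{G_{V\setminus C}}$ with Jacobian $\prod_{i\in C}\Delta_{V\setminus C}(x)/\Delta_{\mathfrak{nb}_{G^*}(i)}(x)$, and the $\operatorname{Dir}_G$ density splits on the nose into $f_{\operatorname{Dir}_{K_C}(\alpha_C,\beta)}(x^C)$ times a function of $x_{V\setminus C}$. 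No simpliciality, no DAG, no induction.
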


\begin{Remark}\label{rem:inproof}
	Setting $C=\{i\}$ for a simplicial vertex $i\in V$, by \eqref{eq-xC}, we have $x^C_{i}=x_{i}$.
	Thus, if $X\sim \operatorname{Dir}_G(\alpha,\beta)$ (resp. $\operatorname{IDir}_G(\alpha,\beta)$), then for any simplicial vertex $i\in V$ one has $X_{i}\sim \operatorname{B}_I(\alpha_{i},\beta)$ (resp. $\operatorname{B}_{II}(\alpha_{i},\beta-\alpha_{i})$). 
\end{Remark}

\section{\texorpdfstring{$\operatorname{nm}_G$ and $\operatorname{mult}_G$ Bayesian models with  $\operatorname{Dir}_G$ and $\operatorname{IDir}_G$ priors}{nmG and multG Bayesian models with  Dir and IDir priors}}\label{six}
Consider a random vector $(N,X)$ such that $\mathbb P_X=\nu$ (the a priori distribution imposed on the parameter) and $\mathbb P_{N|X=x}=\mu_x$, $x\in\mathrm{supp}\,\nu$. We assume that observations $N^{(1)},\ldots,N^{(k)}$ are conditionally independent given $X$, where $X\sim\nu$ and $\mathbb P_{N^{(i)}|X}=\mathbb P_{N|X}$, $i=1,\ldots,k$.  

For a decomposable $G=(V,E)$ we will consider two Bayesian models with the graphical Markov structure. They are generated by 
\begin{enumerate}[label={\arabic*)}, itemjoin={,}, itemjoin*={, and}]
	\item $\mathbb P_{N|X}=\operatorname{nm}_G(r,X)$ and $\mathbb P_X=\operatorname{Dir}_G(\alpha,\beta)$; 
	\item $\mathbb P_{N|Y}=\operatorname{mult}_G(r,Y)$ and $\mathbb P_Y=\operatorname{IDir}_G(\alpha,\beta)$.
\end{enumerate}
\subsection{Conjugacy}

A standard requirement for a Bayesian model generated by $\mathbb P_{N|X}$ and $\mathbb P_X=\nu_\tau$, where $\tau$ is a parameter of the a priori law, is that the posterior distribution $\mathbb P_{X|N^{(1)},\ldots,N^{(k)}}$ is of the form $\nu_{g(\tau,N^{(1)},\ldots,N^{(k)})}$, i.e., $\tau$, the value of the parameter of the a priori law, is updated by the observation $N^{(1)},\ldots,N^{(k)}$. Then we say that $\nu_\tau$ is a conjugate prior in the model. Note also that $\mathbb P_{N^{(i)}}=\mathbb P_N$, $i=1,\ldots,k$. 

It appears that both models in which we are interested possess the conjugacy property. Moreover, the marginal distributions of $N$ are identifiable. 

\begin{Proposition}[Conjugate priors]	\label{prop-conjugate-priors}
	Let \(G=(V,E)\) be finite and decomposable. Assume that observations $(N^{(j)})_{j=1}^k$ follow a Bayesian model generated by $\mathbb P_{N|X}$ and $\mathbb P_X$.
	\begin{enumerate}
		\item  If $\mathbb P_{N|X}=\operatorname{nm}_G(r,X)$ and $\mathbb P_X=\operatorname{Dir}_G(\alpha,\beta)$, the posterior distribution is  \[\mathbb P_{X|N^{(1)},\ldots,N^{(k)}}=\operatorname{Dir}_G(\alpha+N^{(1)}+\ldots+N^{(k)},\beta+kr)\] and the  distribution of $N$ is 
		\begin{equation}\label{eq:DirNM}
			\mathbb{P}(N=n)= \gbinomm{|n|+r-1}{n}{G} \frac{K_G(\alpha,\beta)}{K_G(\alpha+n,\beta+r)},\quad n\in\mathbb{N}^V
		\end{equation}
		with $K_G$ given in \eqref{eq-KG-C-S} 
		(we say that $N$ follows a $G$-Dirichlet negative multinomial distribution).
		\item If $\mathbb P_{N|X}=\operatorname{mult}_G(r,X)$ and $\mathbb P_X=\operatorname{IDir}_G(\alpha,\beta)$, the posterior distribution is \[\mathbb P_{X|N^{(1)},\ldots,N^{(k)}}=\operatorname{IDir}_G(\alpha+N^{(1)}+\ldots+N^{(k)},\beta+kr)\]
		and the distribution of $N$ is
        \begin{equation}\label{eq:IDirMULT}
			\mathbb{P}(N=n)= \gbinom{n}{r}{G} \frac{k_G(\alpha,\beta)}{k_G(\alpha+n,\beta+r)},\quad n\in\mathbb{N}_{G,r}
		\end{equation}
		with $\mathbb{N}_{G,r}$ defined in \eqref{ngr} and $k_G$ given in \eqref{eq-kG-C-S} 
		(we say that $N$ follows a $G$-Dirichlet multinomial distribution).
	\end{enumerate}
\end{Proposition}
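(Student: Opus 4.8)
The plan is to run the textbook conjugacy computation. First I would write the likelihood of the conditionally independent observations $N^{(1)},\dots,N^{(k)}$ given the parameter, multiply it by the prior density, and observe that the product, as a function of the parameter, has exactly the functional form of a density in the same family; by uniqueness of the normalizing constant (which exists and is given by Theorem \ref{thm-norcons}) the posterior must be precisely that member of the family. The marginal (prior predictive) law of a single $N$ is then obtained by integrating the conditional law against the prior, the integral being the reciprocal of another normalizing constant.

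\textbf{Case 1 ($\operatorname{nm}_G$--$\operatorname{Dir}_G$).} Fix $n^{(1)},\dots,n^{(k)}\in\mathbb N^V$ and set $s=n^{(1)}+\dots+n^{(k)}$. By conditional independence and \eqref{eq-PNn},
\[
\prod_{j=1}^k\mathbb P\big(N^{(j)}=n^{(j)}\mid X=x\big)=\Big(\prod_{j=1}^k C_G(n^{(j)},r)\Big)\,x^{s}\,\Delta_G(x)^{kr},\qquad x\in M_G.
\]
Multiplying by the $\operatorname{Dir}_G(\alpha,\beta)$ density \eqref{eq-density-G-Dir} and dropping the factors independent of $x$, the posterior density is proportional to $x^{\alpha+s-1}\Delta_G(x)^{\beta+kr-1}\mathbbm 1_{M_G}(x)$. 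Since $\alpha+s\in(0,\infty)^V$ and $\beta+kr>0$, Theorem \ref{thm-norcons} guarantees that this is a constant multiple of a genuine probability density, namely that of $\operatorname{Dir}_G(\alpha+s,\beta+kr)$; two densities proportional to each other are equal, which yields the claimed posterior. For the marginal, integrating the conditional pmf against the prior gives
\[
\mathbb P(N=n)=C_G(n,r)\,K_G(\alpha,\beta)\int_{M_G}x^{\alpha+n-1}\Delta_G(x)^{\beta+r-1}\,dx=C_G(n,r)\,\frac{K_G(\alpha,\beta)}{K_G(\alpha+n,\beta+r)},
\]
the last equality being the definition of $K_G(\alpha+n,\beta+r)$ as the reciprocal of the integral of the unnormalized $\operatorname{Dir}_G(\alpha+n,\beta+r)$ density (legitimate since $\alpha+n\in(0,\infty)^V$, $\beta+r>0$). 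As the $N^{(i)}$ are identically distributed, this is also $\mathbb P(N^{(i)}=n)$ for each $i$.

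\textbf{Case 2 ($\operatorname{mult}_G$--$\operatorname{IDir}_G$).} The argument is formally identical, with $\Delta_G,C_G,K_G,M_G$ replaced by $\delta_G,c_G,k_G,(0,\infty)^V$ and the exponents of $\delta_G$ negated: for $n^{(j)}\in\mathbb N_{G,r}$ the likelihood equals $\big(\prod_j c_G(n^{(j)},r)\big)\,y^{s}\,\delta_G(y)^{-kr}$, the posterior density is proportional to $y^{\alpha+s-1}\delta_G(y)^{-(\beta+kr)}$, i.e.\ it is $\operatorname{IDir}_G(\alpha+s,\beta+kr)$, and integrating out $y$ gives $\mathbb P(N=n)=c_G(n,r)\,k_G(\alpha,\beta)/k_G(\alpha+n,\beta+r)$ for $n\in\mathbb N_{G,r}$. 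The one genuinely non-automatic point here — and the only real obstacle in the whole proof — is verifying that the updated parameters remain in the admissible range of Definition \ref{def-graph-dir-inverted-dir}, i.e.\ $\beta+kr>\max_{C\in\mathcal C_G^+}|\alpha_C+s_C|$. This holds because $n^{(j)}\in\mathbb N_{G,r}$ forces $|n^{(j)}_C|\le r$ for every maximal clique $C$, hence $|s_C|\le kr$, and together with the prior constraint $|\alpha_C|<\beta$ this gives $|\alpha_C+s_C|\le|\alpha_C|+kr<\beta+kr$; taking $k=1$ likewise justifies the appearance of $k_G(\alpha+n,\beta+r)$ in the marginal formula. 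Everything else is routine bookkeeping, and no integrability issue arises since each integral above is bounded by $1$.
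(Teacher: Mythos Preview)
Your proof is correct and is exactly the standard conjugacy computation one would expect; the paper in fact omits any explicit proof of this proposition, treating it as routine. Your extra care in Case~2 to verify that the updated parameters stay in the admissible range $\beta+kr>\max_{C\in\mathcal C_G^+}|\alpha_C+s_C|$ (via $|n^{(j)}_C|\le r$ for $n^{(j)}\in\mathbb N_{G,r}$) is the only point worth spelling out, and you handle it correctly.
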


\begin{Remark}
	\label{rmk-posterior-likelihood} \	
	\begin{enumerate}[label={\arabic*)}, itemjoin={,}, itemjoin*={, and}]
		\item As we shall see in Section \ref{sec:org9c216d9}, both distributions \(\operatorname{Dir}_G\) and \(\operatorname{IDir}_G\) are strong hyper Markov \cite{Dawid1993}. Combining this with the fact that both \(\operatorname{nm}_G\) and \(\operatorname{mult}_G\) are \(G\)-Markov, Proposition 5.6 of \cite{Dawid1993} implies that distributions \eqref{eq:DirNM} and \eqref{eq:IDirMULT} also satisfy the global graph Markov property.
		\item In the case of a complete graph \(G\)  the distribution defined in \eqref{eq:DirNM} is known as the Dirichlet negative multinomial distribution.
		
		\item In the case of a complete graph \(G\)  the distribution \eqref{eq:IDirMULT} is closely related  to the Dirichlet multinomial distribution. Note that for the Dirichlet multinomial, the Dirichlet distribution is a prior on probabilities \(p\)  and here  we parameterize \(p\) by odds ratios, i.e. \(p_i=y_i/(1+\sum_{i=1}^n y_i)\), $i\in V$. Thus, the inverted Dirichlet distribution, as a prior after such reparametrization $p\mapsto y$, is conjugate to multinomial distribution.
	\end{enumerate}
\end{Remark}

\section{Characterizations of prior distributions through neutralities and the hyper Markov properties}
\label{sec:org9c216d9}
As a corollary of Theorem \ref{thm-strong-directed-hyper-markov-properties} and Theorem \ref{thm-independent-u-w}, we show that the graph negative multinomial (resp. graph multinomial distribution) with their parameter \(X\) (resp. \(Y\)) following  a graph Dirichlet distribution (resp. graph inverted Dirichlet distribution), are strong hyper Markov. The strong hyper Markov property is important in statistical graphical models, because it allows a simple decomposition of the Bayesian analysis into a collection of smaller problems, see e.g., \cite[Corollary 5.5]{Dawid1993}.  For our models the strong hyper Markov property turns out to be equivalent to neutrality properties of the Dirichlet and Inverted Dirichlet distributions. We begin with a review of neutrality properties of a Dirichlet distribution. In the next subsection, we interpret them as strong hyper Markov properties in the Bayesian models we consider.

Characterizations of the Dirichlet distribution through independence properties have a rich history. There are two distinct lines in this area, which are not directly related. One focuses on non-parametric prior in discrete models (\cite{Dawid1993}) and involves characterization via DAG-generated local and global independence of parameters,  \cite{HGC95,GH97,BW09,MW16}. This line can be understood within a framework of complete neutrality in contrast to characterizations based on neutrality with respect to partitions \cite{DR71,F73,JM80,BW07,SW14}. The independence properties considered below pertain to DAG-generated complete neutrality. 

We elaborate on this latter problem, which has so far been studied only in the context of complete graphs. Furthermore, we establish a connection between neutrality properties and strong hyper Markov property, see Theorem \ref{thm:simpleHyper}. Let $V=\{1,\ldots,d\}$ with $d\geq2$, and $G=K_V$ be a complete graph. Then, the set of DAGs with skeleton $G$ is in one-to-one correspondence with the set $S_d$ of permutations of $\{1,\ldots,d\}$ via the transformation $\mathcal G\equiv \sigma$ if and only if $\mathfrak{pa}_{{\mathcal G}}(\sigma(1))=\emptyset$ and  $\mathfrak{pa}_{{\mathcal G}}(\sigma(i))=\{\sigma_1,\ldots,\sigma_{i-1}\}$ for $i=2,\ldots,d$. Let $X$ be a random vector valued in the unit simplex $T_d=\{(x_1,\ldots,x_d)\in(0,\infty)^d\colon |x_V|<1\}=M_{G}$. According to \eqref{eq-u-calG}, the random  vector $u^{\mathcal{G}}(X)$ is given by
\[
\left(X_{\sigma(1)},\frac{X_{\sigma(k)}}{1-\sum_{i=1}^{k-1} X_{\sigma(i)}},\; k=2,\ldots,d \right). 
\]
If $X$ is such that the components of the above vector are independent, then we say that $X$ is completely neutral with respect to $\sigma$. 
It is well known (see, e.g., \cite{JM80}) that if a $T_d$-valued random vector $X$ is completely neutral with respect to any permutation $\sigma\in S_d$, then $X$ follows a Dirichlet distribution. In Theorem \ref{thm:charact}, we generalize this result to arbitrary decomposable graphs, along with an analogous characterization of the $G$-inverted Dirichlet distribution.   

The main result in this section is the following result. 
\begin{Theorem}\label{thm:charact}
	Let $G=(V,E)$ be a decomposable connected graph with at least two vertices.
	\begin{enumerate}[label={\arabic*)}, itemjoin={,}, itemjoin*={, and}]
		\item Let $X$ be an $M_G$-valued random vector. If for every moral DAG $\mathcal{G}$ with skeleton $G$  random variables $u^{\mathcal{G}}_i(X)$, $i\in V$, are independent, then $X$ has a $G$-Dirichlet distribution.
		\item Let $Y$ be a $(0,\infty)^V$-valued random vector.  If for every moral DAG $\mathcal{G}$ with skeleton $G$ random variables $w^{\mathcal{G}}_i(Y)$, $i\in V$, are independent, then $Y$ has a $G$-inverted Dirichlet distribution.
	\end{enumerate}
\end{Theorem}

Let us mention that for the Dirichlet distribution, stronger characterization results are available in the literature. Complete neutrality with respect to all DAGs, as given above, clearly implies that for all $i\in V$, 
$X_{i}$ and $X_{V\setminus\{i\}}/(1-X_i)$ are independent.
The latter condition is referred to as {$X_i$ being neutral in the vector $X$}. It is known that if $X_i$ is neutral in $X$ for each $i\in V$, then $X$ necessarily follows the Dirichlet distribution, see e.g., Theorem 1 in \cite{JM80}. This provides an example of a characterization of the Dirichlet by through neutrality with respect to partitions of $V$ (here there are $|V|$ partitions: $\pi_i=\{\{i\},\,V\setminus\{i\}\}$, $i\in V$). In general, by neutrality of $X$ with respect to a partition $\pi=(P_1,\ldots,P_k)$ of $V$ is defined as the independence of the following $k+1$ random vectors:
\[
\left(\sum_{i\in P_j}X_i\right)_{j=1,\ldots,k},\quad \left(\frac{X_\ell}{\sum_{i\in P_1}X_i}\right)_{\ell\in P_1},\ldots,\left(\frac{X_\ell}{\sum_{i\in P_k}X_i}\right)_{\ell\in P_k}.
\]
The literature explores various sets of partitions that are sufficient for characterizing the Dirichlet distribution. Notably, in the case of the complete graph, properties of complete neutrality with respect to only two appropriately chosen permutations (i.e., independence of the coordinates of $u^\mathcal G$ for only two DAGs) are sufficient to characterize the standard Dirichlet distribution; see \cite{GH97}. This result was extended in \cite{MW16} to characterize the hyper-Dirichlet distribution (see Remark \ref{hyperDir}) using global and local independence properties related to a separating family of DAGs.

\subsection{Hyper Markov properties}
We recall some definitions from \cite{Dawid1993}, translated to  the Bayesian setting we consider. We fix a DAG \(\mathcal{G}\) with skeleton \(G=(V,E)\), and a random variable $N$ with a conditional \(G\)-negative multinomial distribution \(\mathbb P_{N|X}= \operatorname{nm}_G(r,X)\), where \(X\sim \operatorname{Dir}_G(\alpha,\beta)\). Given \(X\), the conditional distribution of \(N\), \(\mathbb{P}_{N|X}\),  can be identified with a collection of random variables 
\begin{equation}\label{strhipMar}\{\mathbb{P}(N=n|X),\ n\in \mathbb{N}^V\}=: (p_n(X))_{n\in \mathbb{N}^V}=p(X)=p.\end{equation}
For \(A \subset V\), the marginal of the conditional distribution of \(N\) given \(X\) on the set \(A\), \(\mathbb{P}_{N_A|X}\), can be identified with a collection of random variables 
\[\{\mathbb{P}(N_A=n_A|X),\ n_A\in \mathbb{N}^A\}=: (p^A_{n_A}(X))_{n_A\in \mathbb{N}^A}=p_A.\]
For \(A,B \subset V\), given $X$, the conditional distribution of \(N_B\) with respect to \(N_A\) can be identified with a collection of random variables
\[\left\{\frac{\mathbb{P}(N_{A\cup B}=n_{A\cup B}|X)}{\mathbb{P}(N_A=n_A|X)},\ n_{A\cup B}\in \mathbb{N}^{A\cup B}\right\}=:(p^{B|A}_{n_{A\cup B}}(X))_{n_{A\cup B}\in \mathbb{N}^{A\cup B}}=p_{B|A}.\]

Several notions of hyper Markov properties of Bayesian graphical models were introduced in terms of these families of r.v. in \cite{Dawid1993}.
\begin{Definition}	\label{def-hyper-Markovs}
	Consider the family of r.v. \(p=(p_n(X))_{n\in \mathbb{N}^V}\) as above. We say that
	\begin{enumerate}[label={\arabic*)}, itemjoin={,}, itemjoin*={, and}]
		\item \(p\) is hyper Markov over \(G\) if for all decompositions \((A,B,C)\)   of \(G\) (recall Definition \ref{def-decomposable-graph})
		\begin{equation}
			\label{eq-hyper-markov}
			\begin{aligned}
				p_{A\cup C} \text{ and } p_{B\cup C} \text{ are conditionally independent given }p_{C}.
			\end{aligned}
		\end{equation}
		\item \(p\) is directed hyper Markov over \(\mathcal{G}\) if for all \(i\in V\),
		\begin{equation}
			\label{eq-directed-hyper-markov}
			\begin{aligned}
				p_{\overline{\mathfrak{pa}}(i)} \text{ and }p_{\mathfrak{nde}(i)} \text{ are conditionally independent given }p_{\mathfrak{pa}(i)}.
			\end{aligned}
		\end{equation}
		\item \(p\) is strong hyper Markov over \(G\) if for all decompositions \((A,B,C)\)  of \(G\), 
		\begin{equation}
			\label{eq-str-hyper-markov}
			\begin{aligned}
				p_{B\cup C|A\cup C} \text{ and  }p_{A\cup C} \text{ are independent}.
			\end{aligned}
		\end{equation}
		\item \(p\) is strong directed hyper Markov over \(\mathcal{G}\) if for all \(i\in V\), 
		\begin{equation}
			\label{eq-str-dir-hyper-markov}
			\begin{aligned}
				p_{\{i\}|\mathfrak{pa}(i)} \text{ and }p_{\mathfrak{nde}(i)} \text{ are independent}.
			\end{aligned}
		\end{equation}
	\end{enumerate}
\end{Definition}
In \cite{Dawid1993}, it is proved that, for a fixed finite decomposable graph $G$,
\begin{enumerate}[label={\arabic*)}, itemjoin={,}, itemjoin*={, and}]
	\item \eqref{eq-str-dir-hyper-markov} holds for all moral DAG \(\mathcal{G}\) (with skeleton \(G\)) is equivalent to \eqref{eq-str-hyper-markov}  holds for \(G\),
	\item \eqref{eq-directed-hyper-markov} holds for one moral DAG \(\mathcal{G}\) (with skeleton \(G\)) is equivalent to \eqref{eq-hyper-markov} holds for \(G\),
	\item \eqref{eq-str-hyper-markov} holds for \(G\) implies \eqref{eq-hyper-markov} holds for the same \(G\).
\end{enumerate}

\begin{Corollary}
	\label{coro-hyper-Markov-for-Gnm}
	Given a finite decomposable \(G\) and any moral DAG \(\mathcal{G}\) with skeleton \(G\),  Bayesian models generated by 
	\begin{enumerate}[label={\arabic*)}, itemjoin={,}, itemjoin*={, and}]
		\item  $\mathbb P_{N|X}=\operatorname{nm}_G(r,X)$ and $\mathbb P_X=\operatorname{Dir}_G(\alpha,\beta)$

        and 
		\item $\mathbb P_{N|Y}=\operatorname{mult}_G(r,Y)$ and $\mathbb P_Y=\operatorname{IDir}_G(\alpha,\beta)$
	\end{enumerate}
	are strong directed hyper Markov over \(\mathcal{G}\).
\end{Corollary}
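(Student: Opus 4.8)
The plan is to derive both statements from Theorem~\ref{thm:Markov} and Theorem~\ref{thm-independent-u-w} by one and the same argument, so I would carry out the $\operatorname{nm}_G$ case in full and note that the $\operatorname{mult}_G$ case follows by the obvious substitutions ($\operatorname{mult}$ for $\operatorname{nm}$, the coordinates $w^{\mathcal{G}}=w^{\mathcal{G}}(Y)$ from \eqref{eq-w-calG} for $u^{\mathcal{G}}$, the support $\mathbb N_{G,r}$ for $\mathbb N^V$, and parts (2) of the two theorems for parts (1)). Fixing $i\in V$, the goal is to verify the strong directed hyper Markov property \eqref{eq-str-dir-hyper-markov} of Definition~\ref{def-hyper-Markovs}, namely that $p_{\{i\}|\mathfrak{pa}(i)}$ and $p_{\mathfrak{nde}(i)}$ are independent. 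The idea is to exhibit each of these two families of random variables as a measurable function of a disjoint block of the coordinates of $U^{\mathcal{G}}=u^{\mathcal{G}}(X)$ from \eqref{eq-u-calG}, which are mutually independent by Theorem~\ref{thm-independent-u-w}.

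First I would identify $p_{\{i\}|\mathfrak{pa}(i)}$. Since, conditionally on $X=x$, the vector $N$ has law $\operatorname{nm}_G(r,x)$, Theorem~\ref{thm:Markov} gives $\mathbb P(N_i=n_i\mid N_{\mathfrak{pa}(i)}=n_{\mathfrak{pa}(i)},X)=\operatorname{nm}(r+|n_{\mathfrak{pa}(i)}|,U^{\mathcal{G}}_i)(n_i)$ for every $n$, and this ratio is by definition the generic entry $p^{\{i\}|\mathfrak{pa}(i)}_{n_{\overline{\mathfrak{pa}}(i)}}(X)$; hence the whole family $p_{\{i\}|\mathfrak{pa}(i)}$ is a deterministic function of the single random variable $U^{\mathcal{G}}_i$.

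Next I would show that $p_{\mathfrak{nde}(i)}$ is a function of $(U^{\mathcal{G}}_j)_{j\in\mathfrak{nde}(i)}$. The crucial point is that $\mathfrak{nde}(i)=V\setminus\overline{\mathfrak{de}}(i)$ is an ancestral set of $\mathcal{G}$: if some $j\in\mathfrak{nde}(i)$ had a parent $k\in\overline{\mathfrak{de}}(i)$, then $j\in\mathfrak{de}(i)$, a contradiction. Marginalizing the directed factorization of $\mathbb P(N=\cdot\mid X)$ from Theorem~\ref{thm:Markov} onto an ancestral set preserves the corresponding conditionals, so $\mathbb P(N_{\mathfrak{nde}(i)}=n_{\mathfrak{nde}(i)}\mid X)=\prod_{j\in\mathfrak{nde}(i)}\operatorname{nm}(r+|n_{\mathfrak{pa}(j)}|,U^{\mathcal{G}}_j)(n_j)$, with every $\mathfrak{pa}(j)$ occurring contained in $\mathfrak{nde}(i)$; thus $p_{\mathfrak{nde}(i)}$ is a measurable function of $(U^{\mathcal{G}}_j)_{j\in\mathfrak{nde}(i)}$. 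Because $i\notin\mathfrak{nde}(i)$, the blocks $\{U^{\mathcal{G}}_i\}$ and $\{U^{\mathcal{G}}_j:j\in\mathfrak{nde}(i)\}$ are disjoint, hence independent by Theorem~\ref{thm-independent-u-w}, and so are their measurable images $p_{\{i\}|\mathfrak{pa}(i)}$ and $p_{\mathfrak{nde}(i)}$. This is exactly \eqref{eq-str-dir-hyper-markov}.

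The step I expect to be the main obstacle is the marginalization onto the ancestral set $\mathfrak{nde}(i)$: one has to verify that $\mathfrak{nde}(i)$ is ancestral and that summing the product of Theorem~\ref{thm:Markov} over the coordinates $n_{\overline{\mathfrak{de}}(i)}$ collapses to the stated product over $\mathfrak{nde}(i)$. This is a standard DAG fact when the support is all of $\mathbb N^V$, but in the $\operatorname{mult}_G$ case the constraint $n\in\mathbb N_{G,r}$ needs a second look; the way around it is to observe that each conditional law $\operatorname{mult}(r-|n_{\mathfrak{pa}(j)}|,w^{\mathcal{G}}_j)$ is supported exactly on the admissible range of $n_j$, so the factorization over any ancestral set (in particular $\mathfrak{nde}(i)$) remains valid.
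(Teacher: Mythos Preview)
Your proposal is correct and follows essentially the same route as the paper: the paper's proof of the corollary is a one-line appeal to Theorem~\ref{thm-strong-directed-hyper-markov-properties} and Theorem~\ref{thm-independent-u-w}, and the detailed unpacking you give (identifying $p_{\{i\}|\mathfrak{pa}(i)}$ as a function of $U^{\mathcal G}_i$ and $p_{\mathfrak{nde}(i)}$ as a function of $(U^{\mathcal G}_j)_{j\in\mathfrak{nde}(i)}$ via marginalization over the ancestral complement) is exactly the computation the paper carries out later in the proof of Theorem~\ref{thm:simpleHyper}. Your remark on the $\operatorname{mult}_G$ support constraint is a nice extra check that the paper leaves implicit.
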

\begin{proof}
	This is an immediate consequence of Theorem \ref{thm-strong-directed-hyper-markov-properties} together with formulas \eqref{eq-u-calG} and \eqref{eq-w-calG} and  independence properties from  Theorem \ref{thm-independent-u-w}.
\end{proof}

As a consequence, both Bayesian graphical models verify all notions of hyper Markov properties in Definition \ref{def-hyper-Markovs}.

Now, we reformulate the strong hyper Markov property of the priors. 

\begin{Theorem}\label{thm:simpleHyper}
	Let $G$ be a connected decomposable graph. 
	\begin{enumerate}[label={\arabic*)}, itemjoin={,}, itemjoin*={, and}]
		\item Let us fix $r>0$ and assume that $\mathbb{P}_{N|X}=\operatorname{nm}_G(r,X)$. The random vector $p$, as defined in \eqref{strhipMar},  is strong (directed) hyper Markov  if and only if for every moral DAG $\mathcal{G}$ with skeleton $G$ random vector $u^{\mathcal{G}}(X)$, defined in \eqref{eq-u-calG},  has independent components. 
		\item Let us fix $r\in\mathbb N_+$ and assume that $\mathbb P_{N|Y}=\operatorname{mult}_G(r,Y)$. The random vector $p$, as defined in \eqref{strhipMar} with $Y$ instead of $X$,   is strong (directed) hyper Markov if and only if for every moral DAG $\mathcal{G}$ with skeleton $G$ random vector $w^{\mathcal{G}}(Y)$, defined in \eqref{eq-w-calG}, has independent components. 
	\end{enumerate}
\end{Theorem}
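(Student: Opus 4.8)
The plan is to read the strong directed hyper Markov property directly off the directed factorization of Theorem~\ref{thm-strong-directed-hyper-markov-properties}, and then invoke the equivalence recalled right after Definition~\ref{def-hyper-Markovs} between strong hyper Markov over $G$ and strong directed hyper Markov over \emph{every} moral DAG with skeleton $G$. I will spell out the $\operatorname{nm}_G$ case; the $\operatorname{mult}_G$ case is word-for-word the same after replacing $u^{\mathcal{G}}$, $\operatorname{nm}$, $r+|\cdot|$, $M_G$ and $\mathbb N^V$ by $w^{\mathcal{G}}$, $\operatorname{mult}$, $r-|\cdot|$, $(0,\infty)^V$ and $\mathbb N_{G,r}$.

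Fix a moral DAG $\mathcal{G}$ with skeleton $G$. Conditionally on $X=x\in M_G$, Theorem~\ref{thm-strong-directed-hyper-markov-properties} gives $\mathbb P(N_i=n_i\mid N_{\mathfrak{pa}(i)}=n_{\mathfrak{pa}(i)},X=x)=\operatorname{nm}(r+|n_{\mathfrak{pa}(i)}|,u^{\mathcal{G}}_i(x))(n_i)$ for every $i\in V$, and since each $u^{\mathcal{G}}_i(x)\in(0,1)$ (Lemma~\ref{lem-clique-poly-new-coord}) the product formula for $\mathbb P(N=n\mid X=x)$ has all factors positive, so this conditional law has full support; in particular all the ratios occurring in Definition~\ref{def-hyper-Markovs} are a.s.\ well defined. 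The core of the argument is two $\sigma$-algebra identifications. First, the single-coordinate negative multinomial parameter is recovered from its law via $u=\tfrac1r\,\operatorname{nm}(r,u)(1)/\operatorname{nm}(r,u)(0)$, so evaluating the displayed conditional at $n_{\mathfrak{pa}(i)}=0$ and $n_i\in\{0,1\}$ shows that $p_{\{i\}\mid\mathfrak{pa}(i)}$ and $u^{\mathcal{G}}_i(X)$ generate the same $\sigma$-algebra. Second, $\mathfrak{nde}(i)=V\setminus\overline{\mathfrak{de}}(i)$ is an ancestral set of $\mathcal{G}$ (its complement is closed under taking children), so the standard marginalization of a DAG model onto an ancestral set gives $\mathbb P(N_{\mathfrak{nde}(i)}=n_{\mathfrak{nde}(i)}\mid X)=\prod_{j\in\mathfrak{nde}(i)}\operatorname{nm}(r+|n_{\mathfrak{pa}(j)}|,u^{\mathcal{G}}_j(X))(n_j)$; the right-hand side is a function of $(u^{\mathcal{G}}_j(X))_{j\in\mathfrak{nde}(i)}$, and conversely each $u^{\mathcal{G}}_j(X)$, $j\in\mathfrak{nde}(i)$, is recovered from $p_{\mathfrak{nde}(i)}$ exactly as above (using $\overline{\mathfrak{pa}}(j)\subset\mathfrak{nde}(i)$), so $\sigma(p_{\mathfrak{nde}(i)})=\sigma((u^{\mathcal{G}}_j(X))_{j\in\mathfrak{nde}(i)})$.

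With these identifications, the strong directed hyper Markov property of $p$ over $\mathcal{G}$---that is, $p_{\{i\}\mid\mathfrak{pa}(i)}$ and $p_{\mathfrak{nde}(i)}$ independent for all $i$---is exactly the statement that $u^{\mathcal{G}}_i(X)$ is independent of $(u^{\mathcal{G}}_j(X))_{j\in\mathfrak{nde}(i)}$ for all $i\in V$. It remains to observe that this bundle of independences is equivalent to joint independence of $(u^{\mathcal{G}}_i(X))_{i\in V}$: one direction is immediate, and for the other I fix a topological order $v_1,\dots,v_{|V|}$ of $\mathcal{G}$ (parents before children), note that $\{v_1,\dots,v_{k-1}\}\subset\mathfrak{nde}(v_k)$ for each $k$, hence $u^{\mathcal{G}}_{v_k}(X)$ is independent of $(u^{\mathcal{G}}_{v_1}(X),\dots,u^{\mathcal{G}}_{v_{k-1}}(X))$, and conclude joint independence by the chain rule. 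Running this for every moral DAG $\mathcal{G}$ with skeleton $G$ and using the recalled equivalence between strong and strong directed hyper Markov yields the theorem. The main obstacle is the second $\sigma$-algebra identification---the ancestral-set marginalization and the verification that $p_{\mathfrak{nde}(i)}$ and $(u^{\mathcal{G}}_j(X))_{j\in\mathfrak{nde}(i)}$ carry the same information---everything else being the directed factorization of Theorem~\ref{thm-strong-directed-hyper-markov-properties} together with routine probabilistic bookkeeping.
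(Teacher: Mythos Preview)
Your proof is correct and follows essentially the same route as the paper's: identify $p_{\{i\}\mid\mathfrak{pa}(i)}$ with $u_i^{\mathcal G}(X)$ and $p_{\mathfrak{nde}(i)}$ with $(u_j^{\mathcal G}(X))_{j\in\mathfrak{nde}(i)}$ via the directed factorization of Theorem~\ref{thm-strong-directed-hyper-markov-properties}, then pass from the family of independences ``$u_i^{\mathcal G}(X)\perp (u_j^{\mathcal G}(X))_{j\in\mathfrak{nde}(i)}$'' to joint independence using an ordering of $V$ compatible with $\mathcal G$, and finally invoke the equivalence between strong hyper Markov over $G$ and strong directed hyper Markov over all moral DAGs. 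The one place where you are more careful than the paper is the \emph{only if} direction: you explicitly recover $u_i^{\mathcal G}(X)$ from $p_{\{i\}\mid\mathfrak{pa}(i)}$ (via the ratio $\operatorname{nm}(r,u)(1)/\operatorname{nm}(r,u)(0)=r\,u$) and recover $(u_j^{\mathcal G}(X))_{j\in\mathfrak{nde}(i)}$ from $p_{\mathfrak{nde}(i)}$, thereby establishing genuine $\sigma$-algebra equalities rather than just measurability in one direction; the paper simply asserts the equivalence as ``clearly equivalent''.
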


\section{Proofs}
\label{sec:org27fcc87}

\subsection{Proofs from Section \ref{sec:orge67f81b}}
\begin{proof}[Proof of Lemma \ref{lem-properties-of-clique-polynomials}]
	\begin{enumerate}[label={\arabic*)}, itemjoin={,}, itemjoin*={, and}]
		\item Clearly, with $A^c=V\setminus A$  we have
		\[\left. \Delta_G(x) \right|_{x_{A^c} = 0} = \sum_{\substack{C\in \mathcal{C}_{G^*} \\ C\cap A^c = \emptyset}} (-1)^{|C|} \pi_C(x)\]
		and
		\[
        \{C\in \mathcal{C}_{G^*}\colon \ C\cap A^c = \emptyset \}=\{C\in \mathcal{C}_{G^*}\colon \ C \subset A\}= \mathcal{C}_{G_A^*},\]
		thus the result follows by referring to the fact that $(G^*)_A=(G_A)^*$.
		\item We first prove that
		\begin{equation}
			\label{eq-clique-in-union}
			\begin{aligned}
				\{C\in \mathcal{C}_{G^*}\colon \ C \subset A\cup B\} = \{C_A \cup C_B\colon \ C_A \in \mathcal{C}_{G_A^*} \text{ and } C_B \in C_{G_B^*}\}.
			\end{aligned}
		\end{equation}
		Given \(C\in \mathcal{C}_{G^*}\) such that \(C\subset A\cup B\),  we have \(C=C_A \cup C_B\) with \(C_A: = C\cap A\in \mathcal{C}_{G_A^*},\ C_B: = C \cap B\in \mathcal{C}_{G_B^*}\), which proves the inclusions from the left to the right in \eqref{eq-clique-in-union}. On the other hand, given any \(C_A\in \mathcal{C}_{G_A^*},\ C_B\in \mathcal{C}_{G_B^*}\), clearly \(C_A\cup C_B \subset A\cup B\). If \(C_A \cup C_B \notin \mathcal{C}_{G^*}\), then \(G_{C_A\cup C_B}\)  has at least one edge, but there are no edges in \(G_{C_A}\) and \(G_{C_B}\). Therefore  this edge connects \(C_A\subset A\) and \(C_B\subset B\), i.e. it connects \(A\) and \(B\), a contradiction. Thus \eqref{eq-clique-in-union} is proved.
		
		Using \eqref{eq-clique-in-union} we can write
		\[\Delta_{A\cup B}= \sum_{\substack{ C\in \mathcal{C}_{G^*} \\ C \subset  A\cup B} }(-1)^{|C|} \pi_C = \sum_{\substack { C_A \in \mathcal{C}_{G_A^*} \\ C_{B} \in \mathcal{C}_{G_B^*} }}(-1)^{|C_A\cup C_B|} \pi_{C_A\cup C_B}.\]
		Note that \(C_A\cap C_B = \emptyset\) implies that \(\pi_{C_A \cup C_B} =\pi_{C_A} \pi_{C_B}\) and \(|C_A\cup C_B| =|C_A| +|C_B|\), thus we have
		\begin{equation*}
			\begin{aligned}
				\Delta_{A\cup B} & = \sum_{\substack { C_A \in \mathcal{C}_{G_A^*} \\ C_B \in \mathcal{C}_{G_B^*} }}(-1)^{|C_A|+| C_B|} \pi_{C_A}\pi_{C_B} 
			 =\left( \sum_{C_A \in \mathcal{C}_{G_A^*}} (-1)^{|C_A|} \pi_{C_A} \right) \left( \sum_{C_B \in \mathcal{C}_{G_B^*}} (-1)^{|C_B|} \pi_{C_B} \right)\\
             &= \Delta_A \Delta_B.
			\end{aligned}
		\end{equation*}
		\item  By 1) we have
		\[\Delta_{(\overline{\mathfrak{nb}}_G(i))^c} = \sum_{\substack {C'\in \mathcal{C}_{G^*} \\ C'\cap \overline{\mathfrak{nb}}_G(i) =\emptyset}} (-1)^{|C'|} \pi_{C'}.\]
		Note that  \((\overline{\mathfrak{nb}}_G(i))^c = \mathfrak{nb}_{G^*}(i)\). Moreover, if \(C'\in \mathcal{C}_{G^*}\) and \(C' \subset  \mathfrak{nb}_{G^*}(i)\), then \(C'\cup \{i\}\in \mathcal{C}_{G^*}\). Therefore,
		\[-\pi_i \Delta_{(\overline{\mathfrak{nb}}_G(i))^c} = -\pi_i \sum_{\substack{C'\in \mathcal{C}_{G^*} \\ C' \subset  \mathfrak{nb}_{G^*}(i) } }(-1)^{|C'|} \pi_{C'} = \sum_{ \substack{ C' \in \mathcal{C}_{G^*} \\ i\in C'}}(-1)^{|C'|} \pi_{C'}.\]
		Thus, for $C\in\mathcal C_G$,
		\begin{equation}
			\label{eq-abeq}
			\begin{aligned}
				\Delta_{V \setminus C} - \sum_{i\in C} \pi_i \Delta_{(\overline{\mathfrak{nb}}_G(i))^c} = \sum_{\substack{C'\in \mathcal{C}_{G^*}\\ C\cap C' =\emptyset } } (-1)^{|C'|}\pi_{C'} + \sum_{i\in C} \sum_{\substack{C'\in \mathcal{C}_{G^*}\\ i\in C'}}(-1)^{|C'|}\pi_{C'}.
			\end{aligned}
		\end{equation}
		Since \(C\) is a clique in $G$, for \(\{i,j\}\in E\) and \(i,j\in C\), we have
		\[\{C'\in \mathcal{C}_{G^*}:\ i\in C'\} \cap \{C'\in \mathcal{C}_{G^*}:\ j\in C'\}=\emptyset,\]
		therefore
		\[\sum_{i\in C}\sum_{\substack{C'\in \mathcal{C}_{G^*}\\ i\in C'}}(-1)^{|C'|}\pi_{C'} = \sum_{\substack { C'\in \mathcal{C}_{G^*} \\ C\cap C' \ne \emptyset}}(-1)^{|C'|}\pi_{C'}.\]
		Plugging the above in \eqref{eq-abeq} yields  the first part of \eqref{eq-Delta-G-C}. The second one for $\delta$ follows from the fact that $\delta(x)=\Delta(-x)$. Taking $C=\{i\}$ in \eqref{eq-Delta-G-C} one gets \eqref{eq-Delta_GV-v}.
		
		\item We partition $\mathcal{C}_{G^*}$ into three disjoint sets:  $A_1= \{ C\in \mathcal{C}_{G^*} \colon i_0\notin C, C\cap \mathfrak{nb}_G(i_0)\neq \emptyset\}$, $A_2=\{ C\in \mathcal{C}_{G^*} \colon C\cap \overline{\mathfrak{nb}}_G(i_0)=\emptyset\}$ and $A_3=\{ C\in \mathcal{C}_{G^*} \colon i_0\in C\}$.
		Since $i_0$ is simplicial  in $G$, $\overline{\mathfrak{nb}}(i_0)$ is a maximal clique in $G$. Thus, if $i_0\in C\in \mathcal{C}_{G^*}$, then $C\cap \mathfrak{nb}_G(i_0)=\emptyset$ and so $A_3=\{C\cup\{i_0\}\colon C\in A_2\}$. Therefore, for $C':=C\cup\{i_0\}\in A_3$ with the unique $C\in A_2$, we have $\pi_{C'}(x)=-x_{i_0}\pi_C(x)$. Moreover, $A_1 =  \{ C\in\mathcal{C}_{\tilde G^*}\colon C\cap \mathfrak{nb}_G(i_0)\neq \emptyset\}$ and similarly
		$A_2 = \{ C\in\mathcal{C}_{\tilde G^*}\colon C\cap \mathfrak{nb}_G(i_0)=\emptyset\}$.
		Thus,
		\begin{align}\label{Dell}
			\Delta_G(x)=\sum_{C\in A_1\cup A_2\cup A_3}(-1)^{|C|}\pi_C(x) = \sum_{C\in A_1}(-1)^{|C|}\pi_C(x)+ (1-x_{i_0})\sum_{C\in A_2}(-1)^{|C|}\pi_C(x).
		\end{align}
		Note that $A_1\cup A_2=\mathcal{C}_{\tilde G^*}$ and thus
		\begin{align}\label{dell}
			\Delta_{\tilde G}(\tilde x) &= \sum_{C\in A_1} (-1)^{|C|}\pi_C(\tilde x) + \sum_{C\in A_2} (-1)^{|C|}\pi_C(\tilde x).
		\end{align}
		For $C\in A_2$, we have $\pi_C(\tilde x)=\pi_C(x)$ since $\tilde x_{V\setminus \overline{\mathfrak{nb}}_G(i_0)}=x_{{V\setminus \overline{\mathfrak{nb}}_G(i_0)}}$.  For $C\in A_1$, given that $i_0$ is simplicial, we have $|C\cap \mathfrak{nb}_G(i_0)|=1$ and therefore $\pi_C(\tilde x)=(1-x_{i_0})^{-1}\pi_C(x)$. On comparing right-hand sides of \eqref{Dell} and \eqref{dell} these two facts finish the proof  of \eqref{eq:Deltax}.
	\end{enumerate}
\end{proof}
\begin{proof}[Proof of Lemma \ref{lem-clique-poly-new-coord}]
	\begin{enumerate}[label={\arabic*)}, itemjoin={,}, itemjoin*={, and}]
		\item Thanks to \eqref{eq-ilocz},  without  loss of generality,  we can assume that \(G\) is connected. We use induction  with respect to  number of vertices of \(G\). If \(|V|=1\), then \eqref{eq-deltaG} is trivial.
		
		Let \(\mathcal{G}\) be a moral DAG with skeleton \(G=(V,E)\). Assume \eqref{eq-deltaG} holds for graphs (not necessarily connected) with number of vertices less than \(|V|\). Let \(i_0\) be the  source vertex in \(\mathcal{G}\), i.e., $\mathfrak{pa}(i_0)=\emptyset$. Denote \(V' = V \setminus \{i_0\}\), \(G'=G_{V'}\) and \(\mathcal{G}' = \mathcal{G}_{V'}\). Since \(G\) is connected, \(\mathfrak{de}(i_0)=V'\) and \(\overline{\mathfrak{de}}(i_0) = V\). By \eqref{eq-u-calG} we have
		\[u^{\mathcal{G}}_{i_0}(x) = 1- \frac{\Delta_G(x)}{\Delta_{G'}(x_{V'})},\]
		that is, \(\Delta_G(x)=(1-u^{\mathcal{G}}_{i_0}(x)) \Delta_{G'}(x_{V'})\). Since $|V'|<|V]$, by the induction assumption, 
		\[\Delta_{G'}(x_{V'}) = \prod_{i\in V'}(1-u^{\mathcal{G'}}_i(x_{V'})).\]
		Since  \(u^{\mathcal{G}}_i\) depends only on \(x_{\overline{\mathfrak{de}}(i)}\),  see \eqref{eq-u-calG}, we have
		\[u^{\mathcal{G}}_i(x)=u^{\mathcal{G}'}_i(x_{V'}),\ i\in V',\]
		and therefore
		\[\Delta_G(x)=(1-u^{\mathcal{G}}_{i_0}(x)) \prod_{i\in V'} (1-u^{\mathcal{G}'}_i(x_{V'}))=\prod_{i\in V} (1-u^{\mathcal{G}}_i(x)).\]
		The same argument works for \(w_i^{\mathcal{G}}\).
		
		\item We first prove \eqref{eq-x-in-u}.  Let again \(i_0\) be the  source vertex  in \(\mathcal{G}\) as in 1). Then, \(\overline{\mathfrak{de}}(i_0)\) contains all vertices in the connected component of \(G\) that contains \(i_0\). Denote this component by \(H=(V_H,E_H)\) and define \(H'=H_{V'}\) and \(H'' = H_{V''}\), where \(V' = V_H \setminus \{i_0\}\) and \(V'' = V_H \setminus \overline{\mathfrak{ch}}(i_0)\). By \eqref{eq-u-calG} and  \eqref{eq-Delta-G-C} of Lemma \ref{lem-properties-of-clique-polynomials}  applied for $G=H$ and $C=\{i_0\}$, we have
		\[u^{\mathcal{G}}_{i_0} = 1- \frac{\Delta_H(x_{V_H})}{\Delta_{H'}(x_{V'})} = 1- \frac{\Delta_{H'}(x_{V'})-x_{i_0} \Delta_{H''}(x_{V''})}{\Delta_{H'}(x_{V'})},\]
		therefore,
		\begin{equation}\label{xi0} x_{i_0}= u^{\mathcal{G}}_{i_0} \frac{\Delta_{H'}(x_{V'})}{\Delta_{H''}(x_{V''})}.\end{equation}
		Let \(\mathcal{H}'=\mathcal{G}_{V'}\) and \(\mathcal{H}'' = \mathcal{G}_{V''}\). By 1) proved just above,  we have
		\[\Delta_{H'}(x_{V'}) = \prod_{i\in V'}(1-u^{\mathcal{H}'}_i(x_{V'}))\quad \text{ and }\quad  \Delta_{H''}(x_{V''}) = \prod_{i\in V''} (1-u^{\mathcal{H}''}_i(x_{V''})).\]
		Note that \((u^{\mathcal{G}}_i(x))_{i\in V'} = (u^{\mathcal{H}'}_i(x_{V'}))_{i\in V'}\), \((u^{\mathcal{G}}_i(x))_{i\in V''} = (u^{\mathcal{H}''}_i(x_{V''}))_{i\in V''}\). Moreover, \(V' \setminus  V'' = \mathfrak{ch}(i_0)\).  Thus, in view of \eqref{xi0} we get 
		\[x^{\mathcal{G}}_{i_0}(u)= x_{i_0} = u_{i_0}^{\mathcal G} \frac{\prod_{i\in V'}(1-u^{\mathcal{G}}_i(x))}{\prod_{i\in V''}(1-u^{\mathcal{G}}_i(x))}=u_{i_0}^{\mathcal G} \prod_{i\in \mathfrak{ch}(i_0)}(1-u^{\mathcal{G}}_i(x)).\]
		Eq. \eqref{eq-x-in-u} now follows from inductive removal of  source vertices  in subsequent graphs, when repeating the above argument.
		
		Next for every \(x\in M_G\), by \eqref{eq-Delta-G-C} of Lemma \ref{lem-properties-of-clique-polynomials}, we have
		\[\Delta_{\overline{\mathfrak{de}}(i)}(x)= \Delta_{\mathfrak{de}(i)}(x)-x_i \Delta_{\mathfrak{de}(i) \setminus  \mathfrak{ch}(i)}(x),\]
		which implies that \(\Delta_{\overline{\mathfrak{de}}(i)}(x) < \Delta_{\mathfrak{de}(i)}(x)\) for all \(i\in V\); therefore, in view of \eqref{eq-u-calG}, we have \(u^{\mathcal{G}}(M_G) \subset  (0,1)^V\).
		
		It remains to prove that \(x^{\mathcal{G}}((0,1)^V) \subset  M_G\). We use induction with respect to the number of elements of \(V\). The case \(|V|=1\)  is trivial. Consider \(|V| > 1\). For arbitrary \(u\in (0,1)^V\), denote \(x=x^{\mathcal{G}}(u)\). By \eqref{eq-deltaG} we have \(\Delta_G(x) > 0\). Let \(j_0\) be arbitrary vertex in \(V\) and \(V'=V \setminus \{j_0\}\). We will prove that \(\Delta_G(\widehat{x}) = \Delta_{V'}(\widehat{x}) > 0\), where \(\widehat{x}\) is  obtained  by plugging \(x_{j_0}=0\) in $x$.
		
		Denote \(\mathcal{G}'=\mathcal{G}_{V'}\), and note that \(\mathcal{G}'\) is a moral DAG with skeleton \(G' = G_{V'}\). Define a vector \(\widehat{u}\in \mathbb{R}^{V'}\) by \(x^{\mathcal{G}'}(\widehat{u}) = x_{V'} = (x^{\mathcal{G}}_i(\widehat u))_{i\in V'}\), i.e., by the system of equations
		\[x^{\mathcal{G}}_i=\widehat{u}_i \prod_{j\in \mathfrak{ch}_{\mathcal{G}'}(i)}(1-\widehat{u}_j) = x_i= u_i \prod_{j\in \mathfrak{ch}_\mathcal{G}(i)}(1-u_i),\ i\in V'.\]
		Note that, in view of the fact that $x_{j_0}=0$, for \(i\notin \overline{\mathfrak{an}}_{\mathcal{G}}(j_0)\) we have \(\widehat{u}_i=u_i\) and for \(i\in \mathfrak{an}_{\mathcal{G}}(j_0)\) the above equality yields 
		\[\widehat{u}_i = u_i \prod_{j\in \mathfrak{ch}_{\mathcal{G}}(i)} \frac{1-u_j}{1-\widehat{u}_j},\]
		where, if  $j_0\in\mathfrak{ch}_{\mathcal G}(i)$ in the last formula, we insert \(\widehat{u}_{j_0}=0\). Consider the last formula as a recursion with respect to the ``directed" distance from \(j_0\) in \(\mathcal{G}\).  We pick \(j_1\) such that \(\mathfrak{ch}_{\mathcal{G}}(j_1)\cap \overline{\mathfrak{an}}_{\mathcal{G}}(j_0) = \{j_0\}\). Then, we have \(\widehat{u}_{j_1} = u_{j_1} (1-u_{j_0})\in(0,\,u_{j_1})\), since \(u_{j_0} > 0\). Consequently,  $\tfrac{1-u_{j_1}}{1-\widehat u_{j_1}}\in(0,1)$.  Iterating this procedure by taking $j_k$ such that $\mathfrak{ch}_\mathcal G(j_k)\cap \overline{\mathfrak{an}}_{\mathcal{G}}(j_0)\subset\{j_0,j_1,\ldots,j_{k-1}\}$  we conclude that \(\widehat{u}\in (0,1)^{V'}\) and \(\Delta_{G'}(x^{\mathcal{G'}}(\widehat{u})) > 0\) by the induction assumption. Finally, note that \(\Delta_{G'}(x^{\mathcal{G}'}(\widehat{u})) = \Delta_{G'}(x)= \Delta_G(\widehat{x})\).
		
		We can repeat the argument by assigning zero to more coordinates of \(x\). In this way we obtain that \(\Delta_{G_A}(x) > 0\) for any \(A \subset  V\), i.e., we have \(x=x^{\mathcal{G}}(u) \in M_G\). Consequently, \(u^{\mathcal{G}}\) is a bijection, and referring to its definition, we conclude that it is a diffeomorphism.
		
		\item Eq. \eqref{eq-y-in-w} follows from Eq. \eqref{eq-x-in-u} by the identity \(\delta_G(x)=\Delta_G(-x)\), it then also follows that \(y^{\mathcal{G}}\) is an automorphism on \((0,\infty)^V\).
		
		\item Immediate consequences of 2) and 3).
	\end{enumerate}
\end{proof}

\subsection{Proofs from Section \ref{sec:coefficients}}

\begin{proof}[Proof of Lemma \ref{lem:NGr}]
	Define $\varepsilon\colon \mathcal{C}_{G^*}\to \{0,1\}^V$ as the indicator function that represents the occurrence of each vertex in a clique. Specifically, for any $C\in\mathcal{C}_{G^*}$, we have $\varepsilon(C)_i = \mathbbm{1}_C(i)$ for $i\in V$.  A clique $C$ belongs to $\mathcal{C}_{G^*}$ if and only if $C$ contains at most one vertex from every maximal clique of $G$. Clearly, $\varepsilon(\mathcal{C}_{G^*})= \mathbb{N}_{G,1}$, and thus $\varepsilon^{-1}(n)\in\mathcal{C}_{G^\ast}$  if and only if $n\in\mathbb{N}_{G,1}$. Therefore, by comparing  \eqref{eq-clique-polynomial-deltaG} with Definition \ref{Ddel} (1), we observe that 
	\[
	\gbinom{1}{n}{G}= \mathbbm{1}_{\mathcal{C}_{G^*}}(\varepsilon^{-1}(n)) = \mathbbm{1}_{\mathbb{N}_{G,1}}(n).
	\]
    Since
\[
    \delta_G(x)^r = \left( \sum_{k\in\mathbb{N}_{G,1}} x^k\right)^r = \sum_{k_1\in\mathbb{N}_{G,1}} \ldots \sum_{k_r\in\mathbb{N}_{G,1}}  x^{\sum_{j=1}^r k_j},
    \]
    we obtain 
\[
	\gbinom{r}{n}{\,G} =\sum_{k_1\in \mathbb{N}_{G,1}} \ldots  \sum_{k_r\in\mathbb{N}_{G,1}} \mathbbm{1}_{n = \sum_{j=1}^r k_j}.
	\]
	Therefore, $\gbinom{r}{n}{\,G}\neq 0$ if  only if 
	\[
	n\in \left\{ k_1+\ldots+k_r\colon k_j\in\mathbb{N}_{G,1}, j=1,\ldots, r \right\} = \mathbb{N}_{G,r}. 
	\]
\end{proof}

\begin{proof}[Proof of Lemma \ref{lem:constant-recustion}]
	Take $x\in \mathbb{R}^V$ and define $\check{x}\in \mathbb{R}^{\tilde{V}}$ as in Lemma \ref{lem-properties-of-clique-polynomials} 4). We have
	\[
	\check{x}^{n_{\tilde V}} = \prod_{i\in \tilde{V}\setminus \mathfrak{nb}_G(i_0)} x_i^{n_i} \prod_{i\in\mathfrak{nb}_G(i_0)} \left( \frac{x_i}{1+x_{i_0}}\right)^{n_i} = x_{\tilde{V}}^{n_{\tilde V}} (1+x_{i_0})^{-|n_{\mathfrak{nb}_G(i_0)}|}.
	\]
	Thus, 	by the second equality in \eqref{eq:Deltax} of Lemma \ref{lem-properties-of-clique-polynomials} 4), we obtain 
	\begin{align*}
		\delta_G^r(x) &= (1+x_{i_0})^r\delta_{\tilde{G}}^r(\check{x})
		= (1+x_{i_0})^r \sum_{n_{\tilde V}\in \mathbb{N}^{\tilde V}} \gbinom{r}{n_{\tilde V}}{\tilde G} \check{x}^{n_{\tilde V}} 
		=  \sum_{n_{\tilde V}\in \mathbb{N}^{\tilde V}} \gbinom{r}{n_{\tilde V}}{\tilde G} x_{\tilde{V}}^{n_{\tilde V}} (1+x_{i_0})^{r-|n_{\mathfrak{nb}_G(i_0)}|} \\
		&=	 \sum_{n_{\tilde{V}}\in \mathbb{N}^{\tilde V} }\sum_{n_{i_0}=0}^{r-|n_{\mathfrak{nb}_G(i_0)}|}\,\gbinom{r}{n_{\tilde V}}{\tilde G}\,  \binom{r-|n_{\mathfrak{nb}_G(i_0)}|}{n_{i_0}} x_{\tilde{V}}^{n_{\tilde V}} x_{i_0}^{n_{i_0}} = \sum_{n\in \mathbb{N}^{V} } \gbinom{r}{n_{\tilde V}}{\tilde G} \binom{r-|n_{\mathfrak{nb}_G(i_0)}|}{n_{i_0}} x^n,
	\end{align*}
	where we used the convention that $\binom{a}{b}=0$ if $a<0$. In view of Definition \ref{Ddel} (1) and by the uniqueness of the coefficients, we obtain the first part of the assertion. 
	
	The second part follows similarly, using the identity $\Delta_G(x)=(1-x_{i_0})\Delta_{\tilde{G}}(\tilde{x})$ and the series expansion
	\[
	(1-x_{i_0})^{-r-|n_{\mathfrak{nb}_G(i_0)}|} = \sum_{n_{i_0}=0}^\infty  \binom{n_{i_0}+|n_{\mathfrak{nb}_G(i_0)}|+r-1}{n_{i_0}} x_{i_0}^{n_{i_0}}.
	\]
\end{proof}

\begin{proof}[Proof of Lemma \ref{lem:explicit-constants}]
	We proceed by induction on the number of vertices in the graph. The assertion is trivial if $|V|=1$. Suppose that the assertion holds for all decomposable graphs $G$ with $p$ vertices, and consider a graph with $p+1$ vertices. Then, there exists a simplicial vertex $i_0$, and by Lemma \ref{lem:constant-recustion}, 
\begin{equation}\label{(rn)}
	\gbinom{r}{n}{G} = \gbinom{r}{n_{\tilde V}}{\tilde G} \binom{r-|n_{\mathfrak{nb}_G(i_0)}|}{n_{i_0}} = \frac{\prod_{C\in\mathcal{C}_{\tilde G}^+} \binom{r}{n_C}}{\prod_{S\in\mathcal{S}_{\tilde G}^-} \binom{r}{n_S}^{\nu_S^{\tilde G}}}  \binom{r-|n_{\mathfrak{nb}_G(i_0)}|}{n_{i_0}}
	\end{equation}
	where $\tilde{V}=V\setminus\{i_0\}$, $\tilde{G} = G_{\tilde{V}}$, and the latter equality follows from the induction hypothesis. Since $i_0$ is simplicial in $G$, we have $\overline{\mathfrak{nb}}_G(i_0)\in\mathcal{C}^+_G$. We consider two cases: (a) $\mathfrak{nb}_G(i_0)\in\mathcal{C}_{\tilde G}^{+}$ and (b) $\mathfrak{nb}_G(i_0)\notin\mathcal{C}_{\tilde G}^{+}$. 
	
	In case (a), we have  $\mathcal{C}_G^+ = (\mathcal{C}_{\tilde G}^+\setminus \{ \mathfrak{nb}_G(i_0)\} )\cup \{\overline{\mathfrak{nb}}_G(i_0)\}$ and $\nu_S^G=\nu_S^{\tilde{G}}$ for all $S\in \mathcal{S}_G^- = \mathcal{S}_{\tilde G}^-$.  Thus, in view of \eqref{(rn)},
	\[
	\gbinom{r}{n}{G} = \frac{\prod_{C\in\mathcal{C}_{G}^+} \binom{r}{n_C}}{\prod_{S\in\mathcal{S}_{G}^-} \binom{r}{n_S}^{\nu_S^G}} 
	\frac{\binom{r}{n_{\mathfrak{nb}_G(i_0)}}}{\binom{r}{n_{\overline{\mathfrak{nb}}_G(i_0)}}} \binom{r-|n_{\mathfrak{nb}_G(i_0)}|}{n_{i_0}} =  \frac{\prod_{C\in\mathcal{C}_{G}^+} \binom{r}{n_C}}{\prod_{S\in\mathcal{S}_{G}^-} \binom{r}{n_S}^{\nu_S^G}},
	\]
	where the latter equality follows from a well-known identity for the multinomial coefficient. 
	
	In case (b), we obtain $\mathcal{C}_G^+ = \mathcal{C}_{\tilde G}^+\cup\{ \overline{\mathfrak{nb}}_G(i_0) \}$,  $S_0=\mathfrak{nb}_G(i_0)\in \mathcal{S}_G^-$, $\nu_{S_0}^G = \nu_{S_0}^{\tilde{G}}+1$ and $\nu_{S}^G = \nu_{S}^{\tilde{G}}$ for $S\in\mathcal{S}_{\tilde{G}}\setminus\{S_0\}$.  
	Thus, in view of \eqref{(rn)}, 
	\[
	\gbinom{r}{n}{G} =  \frac{\prod_{C\in\mathcal{C}_{G}^+} \binom{r}{n_C}}{\prod_{S\in\mathcal{S}_{G}^-} \binom{r}{n_S}^{\nu_S^G}} 
	\frac{\binom{r}{n_{\mathfrak{nb}_G(i_0)}}}{\binom{r}{n_{\overline{\mathfrak{nb}}_G(i_0)}}}
	\binom{r-|n_{\mathfrak{nb}_G(i_0)}|}{n_{i_0}}
	\]
	and the same argument follows. 
	The assertion for the graph-multinomial coefficient of the second type follows similarly. 
	
	Any moral DAG $\mathcal{G}$ can be constructed from a perfect elimination order of vertices $(v_1,\ldots,v_{|V|})$ by setting $v_j\to v_i$ if $i<j$ and $v_i\sim v_j$. Since each $v_i$ is simplicial in $G_{\{v_i,\ldots, v_{|V|}\}}$, the second part of the assertion follows by iterating formulas of Lemma \ref{lem:constant-recustion}.
\end{proof}

\begin{proof}[Proof of Lemma \ref{lem:MGseries}]

Take $x\in M_G$, and define $u\in \mathbb{R}^V$ by \eqref{eq-x-in-u}.
	By the definition of $u$ and Lemma \ref{lem:explicit-constants}, we have
	\[
	\gbinomm{|n|+r-1}{n}{G} x^n = \prod_{i\in V}\binom{n_i+|n_{\mathfrak{pa}(i)}|  +r-1}{ n_i} u_i^{n_i} (1-u_i)^{|n_{\mathfrak{pa}(i)}|}.
	\]
It follows that
	\[
	\sum_{n \in\mathbb{N}^V} \gbinomm{|n|+r-1}{n}{G} x^n  = \sum_{n \in\mathbb{N}^V}\,\prod_{i\in V}\binom{n_i+|n_{\mathfrak{pa}(i)}|  +r-1}{ n_i} u_i^{n_i} (1-u_i)^{|n_{\mathfrak{pa}(i)}|} =\prod_{i\in V} (1-u_i)^{-r},
	\]
 where the summation with respect to $n\in\mathbb N^V$ is taken following the  perfect ordering of vertices associated with $\mathcal G$, i.e., the $k$th sum is over $n_{i_k}$, where $i_k$ has no parents among $\{i_{k+1},\ldots,i_{|V|}\}$. 
	Moreover, by \eqref{eq-deltaG}, the product equals $\Delta_G(x)^{-r}$. 

Now, suppose that the series \eqref{eq:DeltaG} converges for some $x\in(0,\infty)^V\setminus M_G$. By Definition \ref{Ddel}, we conclude that 
$\Delta_G(x)\neq 0$.

Case 1: $\Delta_G(x)<0$. Since the series \eqref{eq:DeltaG} converges for $x$, it also converges for $x_t:=t x$ with $t \in (0,1]$. 
However, for $t$ sufficiently small, we have $x_t\in M_G$, which forces $\Delta_G(x_t)>0$. By the continuity of 
$\Delta_G$, there exists $t_0\in(0,1)$ such that $\Delta_G(x_{t_0})=0$. This contradicts the fact that convergence of the series implies $\Delta_G(x_{t_0})$ is nonzero.

Case 2: $\Delta_G(x)>0$. Since $x\not\in  M_G$, there exists $A\subset V$ such that $\Delta_A(x)\leq 0$. The convergence of the series  \eqref{eq:DeltaG} for $x$, implies it also converges for $x'$ defined by $x'_A = x_A$ and $x'_{A^c}=0$. Noting that $\Delta_G(x') = \Delta_A(x)$,  we see that the possibility $\Delta_A(x)=0$ is ruled out by the convergence condition. If instead $\Delta_A(x)<0$, then the argument from Case 1 applies.

In either situation a contradiction is reached, completing the proof. 
\end{proof}

\subsection{Proof from Section \ref{sec:basic}}

\begin{proof}[Proof of Theorem \ref{thm:marginal}]  We prove only the first part; the second follows along the same lines since $\delta_G(x)=\Delta_G(-x)$.

	Let \(\Theta \subset \mathbb{R}^V\) contains a neighborhood of 0, and \(\theta\in \Theta\) such that \(\theta_i=0\) if \(i\notin C\), we have
	\begin{equation}
		\label{eq-LNA}
		\begin{aligned}
			L_{N_C}(\theta_C)=L_N(\theta)=\left( \frac{\Delta_G(x)}{\Delta_G(xe^{\theta})} \right)^r.
		\end{aligned}
	\end{equation}
	By 3) of Lemma \ref{lem-properties-of-clique-polynomials} we have
	\[\Delta_G=\Delta_{V\setminus C}-\sum_{i\in C}\pi_{\{i\}} \Delta_{\mathfrak{nb}_{G^*}(i)}.
	\]
	Since \(C\) is a clique in \(G\), \(\mathfrak{nb}_{G^*}(i) \subset  V\setminus C\) for all \(i\in C\), and by our choice of \(\theta\), we also have \(\Delta_{V\setminus C}(xe^{\theta})=\Delta_{V\setminus C}(x)\). As a consequence,
	\[\frac{\Delta_G(x)}{\Delta_G(x e^{\theta})}=\frac{\Delta_{V\setminus C}(x)-\sum_{i\in C}\,x_i\Delta_{\mathfrak{nb}_{G^*}(i)}(x)}{\Delta_{V\setminus C}(x)-\sum_{i\in C}\,x_i e^{\theta_i}\Delta_{\mathfrak{nb}_{G^*}(i)}(x)}
	=\frac{1-\sum_{i\in C}x_i\frac{\Delta_{\mathfrak{nb}_{G^*}(i)}(x)}{\Delta_{V\setminus C}(x)}}{1-\sum_{i\in C}\,e^{\theta_i}\,x_i\frac{\Delta_{\mathfrak{nb}_{G^*}(i)}(x)}{\Delta_{V\setminus C}(x)}}=\tfrac{1-\sum_{i\in C}\,x^C_i}{1-\sum_{i\in C}\,e^{\theta_i}x^C_i},\]
	where \(x^C=(x_i^C)_{i\in C}\) is defined in \eqref{eq-xC}. 
	Eq. \eqref{eq-LNA} and the fact that $\Delta_C=1-\sum_{i\in C}\pi_{\{i\}}$ for $C\in\mathcal C_G$  imply
	\[L_{N_C}(\theta_C)=\left( \frac{\Delta_C(x^C)}{\Delta_C(x^Ce^{\theta_C})} \right)^r,\]
	in a neighborhood of \(0\in \mathbb{R}^C\).
\end{proof}

\subsection{Proofs from Section \ref{sec:bivariate}}
\begin{Lemma}\label{lem:hypergeo}
	Let $|z_S|=\sum_{v\in S} z_v$, where $S$ is a nonempty set. 
	\begin{enumerate}[label={\arabic*)}, itemjoin={,}, itemjoin*={, and}]
		\item If $c$ is not a non-positive integer, then 
		\begin{align*}
			\sum_{n_S\in\mathbb{N}^S} \frac{ (a)^{(|n_S|)} (b)^{(|n_S|)}   }{  (c)^{(|n_S|)} } \prod_{v\in S}\frac{ z_v^{n_v}}{n_v!} = {}_2F_1(a,b;c; |z_S|)
		\end{align*}
		provided the rhs above is well defined. 
		\item If $c$ is a negative integer and $a,b$ are non-positive integers  such that $\min\{a,b\}\geq c$, then 
		\begin{align*}
			\sum_{n_S\in\mathbb{N}^S\colon |n_S|\leq \min\{-a,-b\}} \frac{ (a)^{(|n_S|)} (b)^{(|n_S|)}   }{  (c)^{(|n_S|)} } \prod_{v\in S}\frac{ z_v^{n_v}}{n_v!} = {}_2F_1(a,b;c; |z_S|).
		\end{align*}
	\end{enumerate}
\end{Lemma}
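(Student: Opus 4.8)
The plan is to exploit the fact that the general term of the series depends on $(n_v)_{v\in S}$ only through $m:=|n_S|=\sum_{v\in S}n_v$. So I would first sum over all $n_S$ with a fixed value of $m$, using the multinomial theorem
\[
\sum_{n_S\in\mathbb{N}^S\colon |n_S|=m}\ \prod_{v\in S}\frac{z_v^{n_v}}{n_v!}=\frac{1}{m!}\Bigl(\sum_{v\in S}z_v\Bigr)^{m}=\frac{|z_S|^m}{m!},\qquad m\in\mathbb{N},
\]
and then recognize the resulting single series in $m$ as the hypergeometric series.

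For part 1, when $c$ is not a non-positive integer one has $(c)^{(m)}\neq 0$ for all $m$, so every term is defined. Using the hypothesis that the right-hand side is well defined — which in all the applications in the paper comes together with absolute convergence of the multivariate series (e.g. this holds whenever the $z_v$ are nonnegative with $|z_S|<1$, which covers \eqref{eq:nm13}) — one may rearrange the absolutely convergent sum by grouping the terms according to the value of $m$, obtaining
\[
\sum_{n_S\in\mathbb{N}^S}\frac{(a)^{(|n_S|)}(b)^{(|n_S|)}}{(c)^{(|n_S|)}}\prod_{v\in S}\frac{z_v^{n_v}}{n_v!}=\sum_{m=0}^{\infty}\frac{(a)^{(m)}(b)^{(m)}}{(c)^{(m)}}\,\frac{|z_S|^m}{m!}={}_2F_1(a,b;c;|z_S|).
\]

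For part 2 I would first make sense of the right-hand side. Write $a=-p$, $b=-q$ with $p,q\in\mathbb{N}$ and $c=-k$ with $k\in\mathbb{N}_+$; the hypothesis $\min\{a,b\}\ge c$ becomes $\max\{p,q\}\le k$. For $0\le m\le\min\{p,q\}$ all factors of $(c)^{(m)}=(-k)(-k+1)\cdots(-k+m-1)$ are strictly negative (the largest, $-k+m-1$, is at most $-1$ since $m\le k$), so $(c)^{(m)}\neq 0$; whereas for $m>\min\{p,q\}$ at least one of $(a)^{(m)},(b)^{(m)}$ vanishes. Hence the hypergeometric series terminates and
\[
{}_2F_1(a,b;c;|z_S|)=\sum_{m=0}^{\min\{p,q\}}\frac{(a)^{(m)}(b)^{(m)}}{(c)^{(m)}}\,\frac{|z_S|^m}{m!}.
\]
The left-hand side of the claimed identity is already a finite sum, over $\{n_S\in\mathbb N^S\colon|n_S|\le\min\{p,q\}\}$, so there is nothing to justify about convergence; applying the same grouping-by-$m$ step as above turns it into the terminating series just displayed.

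The only genuinely delicate point is the degenerate case of part 2: one must confirm that the symbol ${}_2F_1(a,b;c;\cdot)$ can be assigned a meaning — as a terminating series with nonzero denominators — before the identity is even stated, and this is exactly the elementary arithmetic $\max\{p,q\}\le k\Rightarrow(c)^{(m)}\neq 0$ for $m\le\min\{p,q\}$. Everything else reduces to the multinomial theorem together with a routine rearrangement of an absolutely convergent (or finite) series.
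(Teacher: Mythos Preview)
Your proof is correct and follows essentially the same approach as the paper: expand ${}_2F_1(a,b;c;|z_S|)$ as a series in $m$, apply the multinomial theorem to relate $|z_S|^m/m!$ to the inner sum over $\{n_S:|n_S|=m\}$, and swap sums. You are in fact slightly more explicit than the paper in verifying that $(c)^{(m)}\neq 0$ for $m\le\min\{-a,-b\}$ in part~2 and in flagging absolute convergence in part~1, but the argument is the same.
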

\begin{proof}
	1) By definition of ${}_2F_1$ we have
	\begin{align*}
		{}_2F_1(a,b;c; |z_S|) &= \sum_{k=0}^\infty \frac{(a)^{(k)}(b)^{(k)}}{(c)^{(k)}} \frac{(\sum_{v\in S} z_v)^k}{k!}
		=\sum_{k=0}^\infty \frac{(a)^{(k)}(b)^{(k)}}{(c)^{(k)}}  \sum_{n_S\in \mathbb{N}^S} \mathbbm{1}_{|n_S|=k} \prod_{v\in S} \frac{ z_v^{n_v}}{n_v!}
	\end{align*}
	and the assertion follows from swapping the sums. 
	
	2) If $a$ and $b$ are non-positive integers, then the series in the definition of ${}_2F_1$ terminates and we have
	\[
	{}_2F_1(a,b;c; z) = \sum_{k=0}^{\min\{-a,-b\}} \frac{(a)^{(k)}(b)^{(k)}}{(c)^{(k)}} \frac{z^k}{k!}.
	\]
	If $c$ is a negative integer such that $\min\{a,b\}\geq c$, then the above formula still makes sense. Therefore, 2) follows from the same argument as 1). 
\end{proof}
\begin{proof}[Proof of Theorem \ref{thm:pairwise}]
	By the Markov property, $N_i$ and $N_j$ are conditionally independent given $N_S$. Thus,
	\begin{align}\label{eq:sum-nS}
		\mathbb{P}(N_i=n_i, N_j=n_j) =
		\sum_{n_S} \frac{\mathbb{P}(N_{S\cup \{i\}} = n_{S\cup\{i\}}) \mathbb{P}(N_{S\cup \{j\}} = n_{S\cup\{j\}})}{\mathbb{P}(N_S = n_{S})}.
	\end{align}
	We claim that $S\in\mathcal{C}_G$. Indeed, if $S$ is not a clique, then $G_{S\cup\{i,j\}}$ has an induced cycle of length at least $4$, which contradicts decomposability. In particular $S\cup\{i\}, S\cup\{j\}\in\mathcal{C}_G$. 
	By Theorem \ref{thm:marginal}, the distributions of $N_S$, $N_{S\cup \{i\}}$ and $N_{S\cup \{j\}}$ are the negative multinomial (resp. multinomial) distributions with parameters given by \eqref{eq-xC} (resp. \eqref{eq-yC}). 
	
	1) We have 
	\[
	\frac{\mathbb{P}(N_{S\cup \{i\}} = n_{S\cup\{i\}}) \mathbb{P}(N_{S\cup \{j\}} = n_{S\cup\{j\}})}{\mathbb{P}(N_S = n_{S})} = \frac{ \binom{ r+|n_S|+n_i-1}{n_{S\cup\{i\}}} \binom{ r+|n_S|+n_j-1}{n_{S\cup\{j\}}}}{\binom{ r+|n_S|-1}{n_S}} \tilde x_1^{n_i} \tilde x_3^{n_j} \prod_{v\in S} y_v^{n_v}  \Delta^r,
	\]
	where $\tilde x_1 = x_i^{S\cup\{i\}}$, $\tilde x_3 = x_j^{S\cup\{j\}}$,  $\Delta = \frac{ (1-|x^{S\cup\{i\}}|)(1-|x^{S\cup\{j\}}|)}{1-|x^S|}$ and 
	\[
	y_v = \frac{x_v^{S\cup\{i\}}x_v^{S\cup\{j\}}}{x_v^{S}} = x_v \Delta_{\mathfrak{nb}_{G^*}(v)}(x) \frac{\Delta_{V\setminus S}(x)}{\Delta_{V\setminus (S\cup\{i\})}(x) \Delta_{V\setminus (S\cup\{j\})}(x)},\qquad v\in S.
	\]
	By \eqref{eq-xC} and \eqref{eq-Delta-G-C}, we have $1-|x^C|=\frac{\Delta_G(x)}{\Delta_{V\setminus C}(x)}$ for any $C\in\mathcal{C}_G$. This implies that
	\[
	\Delta = \frac{\Delta_{V\setminus S}(x)\Delta_G(x)}{\Delta_{V\setminus (S\cup\{i\})}(x)\Delta_{V\setminus (S\cup\{j\})}(x)}
	\quad\mbox{and}\quad \tilde{x}_2:=|y_S| = \frac{\Delta_{V\setminus S}(x)(\Delta_{V\setminus S}(x)-\Delta_G(x))}{\Delta_{V\setminus (S\cup\{i\})}(x) \Delta_{V\setminus (S\cup\{j\})}(x)}.
	\]
	Moreover, by \eqref{eq-Delta_GV-v} we obtain
	\[
	\frac{\Delta_{V\setminus S}(x)}{\Delta_{V\setminus (S\cup\{k\})}(x)} = 1 - x_k \frac{\Delta_{\mathfrak{nb}_{G_{V\setminus S}^*}(k)}(x)}{\Delta_{V\setminus (S\cup\{k\})}(x)},\qquad k=i,j. 
	\]
	We claim that when $k\in \{i,j\}$ we have $\mathfrak{nb}_{G_{V\setminus S}^*}(k)=\mathfrak{nb}_{G^*}(k)$.
	Indeed, we always have $\mathfrak{nb}_{G_{V\setminus S}^*}(k)\subset\mathfrak{nb}_{G^*}(k)$;
	if $v\in \mathfrak{nb}_{G^*}(k)$, then $v\notin S=\mathfrak{nb}_G(i)\cap \mathfrak{nb}_G(j)$, therefore $v\in \mathfrak{nb}_{G_{V\setminus S}^*}(k)$.
	As a result \eqref{eq:tildex} for $\tilde x_1$ and $\tilde x_3$ follows. It can be verified by direct calculation that $\Delta=(1-\tilde{x}_1)(1-\tilde{x}_3)-\tilde x_2=\Delta_{1-2-3}(\tilde{x})$. 
	
	Moreover, 
	\begin{align*}
		\frac{ \binom{ r+|n_S|+n_i-1}{n_{S\cup\{i\}}} \binom{ r+|n_S|+n_j-1}{n_{S\cup\{j\}}}}{\binom{ r+|n_S|-1}{n_S}} =
		\binom{r+n_i-1}{n_i} \binom{r+n_j-1}{n_j} \frac{ (r+n_i)^{(|n_S|)} (r+n_j)^{(|n_S|)}   }{  n_S! (r)^{(|n_S|)} }.
	\end{align*}
	Proof of 1) is completed using Lemma \ref{lem:hypergeo} 1). 
	
	2) This case is treated similarly. By Theorem \ref{thm:marginal} we have
	\[
	\frac{\mathbb{P}(N_{S\cup \{i\}} = n_{S\cup\{i\}}) \mathbb{P}(N_{S\cup \{j\}} = n_{S\cup\{j\}})}{\mathbb{P}(N_S = n_{S})} = \frac{ \binom{r}{n_{S\cup\{i\}}} \binom{r}{n_{S\cup\{j\}}}}{\binom{r}{n_{S}}}  \tilde{y}_i^{n_i} \tilde{y}_j^{n_j} \prod_{v\in S} x_v^{n_v} \delta^{-r},
	\]
	where $\tilde{y}_1 = y_i^{S\cup\{i\}}$, $\tilde{y}_2 = y_j^{S\cup\{j\}}$, $\delta = \frac{(1+|y^{S\cup\{i\}}|)(1+|y^{S\cup\{j\}}|)}{1+|y^S|}$ and
	\[
	x_v = \frac{y_v^{S\cup\{i\}}y_v^{S\cup\{j\}}}{y_v^{S}} = y_v \delta_{\mathfrak{nb}_{G^*}(v)}(y) \frac{\delta_{V\setminus S}(y)}{\delta_{V\setminus (S\cup\{i\})}(y) \delta_{V\setminus (S\cup\{j\})}(y)},\qquad v\in S.
	\]
	By 3) of Lemma \ref{lem-properties-of-clique-polynomials}, we obtain
	\[
	\tilde y_2:= |x_S| = \frac{\delta_{V\setminus S}(y)(\delta_G(y)-\delta_{V\setminus S}(y))}{ {\delta}_{V\setminus (S\cup\{i\})}(y) \delta_{V\setminus (S\cup\{j\})}(y)}.
	\]
	Eq. \eqref{eq:tildey} is proved similarly as in 1). 
	Finally, 
	\begin{align*}
		\frac{ \binom{r}{n_{S\cup\{i\}}} \binom{r}{n_{S\cup\{j\}}}}{\binom{r}{n_{S}}} &= \binom{r}{n_i}\binom{r}{n_j} \frac{ (r-n_i)_{(|n_S|)} (r-n_j)_{(|n_S|)}}{(r)_{|n_S|}} \\
		&=\binom{r}{n_i}\binom{r}{n_j}(-1)^{|n_S|}\frac{ (n_i-r)^{(|n_S|)} (n_j-r)^{(|n_S|)}}{(-r)^{|n_S|}},
	\end{align*}
	where we used the fact that $(-a)_{(k)}=(-1)^k (a)^{(k)}$. The sum in \eqref{eq:sum-nS} ranges over $n_S\in\mathbb{N}$ with $|n_S|\leq \min\{r-n_i,r-n_j\}$. The proof is concluded with the application of Lemma \ref{lem:hypergeo} 2).
\end{proof}

\subsection{Proofs from Section \ref{sec:org7774c71}}

\begin{proof}[Proof of Theorem \ref{thm-strong-directed-hyper-markov-properties}]
	\begin{enumerate}[label={\arabic*)}, itemjoin={,}, itemjoin*={, and}]
		\item By Eq. \eqref{eq-deltaG}  and \eqref{eq-ui-as-xu} we have
		\[\Delta_G(x)=\prod_{i\in V}(1-u^{\mathcal{G}}_i)\quad\mbox{and}\quad x_i=u^{\mathcal{G}}_i \prod_{j\in \mathfrak{ch}(i)}(1-u^{\mathcal{G}}_j),\quad i\in V.\]
	Moreover, by Lemma \ref{lem:explicit-constants}, we have 
	\begin{align*}
		\gbinomm{|n|+r-1}{n}{G} = \prod_{i\in V} \binom{n_i+|n_{\mathfrak{pa}(i)}|  +r-1}{ n_i}
\end{align*}
Plugging the above into \eqref{eq-PNn}, we have for $n\in\mathbb{N}^V$,
\begin{align*}
		\mathbb{P}(N=n)& =\prod_{i\in V} \binom{n_i+|n_{\mathfrak{pa}(i)}|  +r-1}{n_i}\prod_{i\in V}(1-u_i^{\mathcal{G}})^r \prod_{i\in V} \left[ u^{\mathcal{G}}_i \prod_{j\in \mathfrak{ch}(i)} (1-u^{\mathcal{G}}_j) \right]^{n_i}\\
		&= \prod_{i\in V} \binom{n_i+|n_{\mathfrak{pa}(i)}|  +r-1}{n_i} (u^{\mathcal{G}}_i)^{n_i} (1-u^{\mathcal{G}}_i)^{ r +  |n_{\mathfrak{pa}(i)}|}.
\end{align*}
		\item Similarly,
		\[\delta_G(y)=\prod_{i\in V}(1+w_i^{\mathcal{G}})\quad\mbox{and}\quad\ y_i=w^{\mathcal{G}}_i \prod_{j\in \mathfrak{ch}(i)}(1+ w^{\mathcal{G}}_j),\quad i\in V\]
		and
		\[
		\gbinom{r}{n}{G} = \prod_{i\in V} \binom{r-|n_{\mathfrak{pa}(i)}| }{ n_i}.
		\]
		Plugging the above into \eqref{eq-PNn2}, we get for \(n\in \mathbb{N}_{G,r}\),
		\begin{equation*}
			\begin{aligned}
				\mathbb{P}(N=n) & = \prod_{i\in V} \binom{r-|n_{\mathfrak{pa}(i)}| }{ n_i}\prod_{i\in V} (1+w^{\mathcal{G}}_i)^{-r} \prod_{i\in V}\left[w^{\mathcal{G}}_i \prod_{j\in \mathfrak{ch}(i)} (1+ w^{\mathcal{G}}_j)\right]^{n_i}\\
				& = \prod_{i\in V} \binom{r-|n_{\mathfrak{pa}(i)}| }{ n_i} (w^{\mathcal{G}}_i)^{n_i} (1+w^{\mathcal{G}}_i)^{|n_{\mathfrak{pa}(i)}|-r}.
			\end{aligned}
		\end{equation*}
	\end{enumerate}
\end{proof}

\subsection{Proofs from Section \ref{sec:char-markov}}

\begin{proof}[Proof of Theorem \ref{thm:char-markov}]
	We proceed by induction on the number $K$ of maximal cliques of $G$. If $K=1$, then the assertion is trivially satisfied. Assume that the assertion holds true for all decomposable graphs $G$ with $|\mathcal{C}^+_G|= K$ and let $G$ be a graph with $K+1$ maximal cliques. 
	Let $(C_1,\ldots,C_{K+1})$ be a perfect ordering of its maximal cliques, and define
	\[
	R = C_{K+1}\setminus ( C_1\cup\ldots\cup C_K). 
	\]
	The induced subgraph $G'=G_{V\setminus R}$ is connected, decomposable, and has $K$ maximal cliques, $C^+_{G'} = \{ C_1,\ldots,C_K\}$. It is clear that if $N$ is $G$-Markov, then $N_{V\setminus R}$ is $G'$-Markov.  By \eqref{eq-perfect-ordering-max-cliques}, there exists $\ell\in\{1,\ldots,K\}$ such that $S_{K+1} = C_{K+1}\cap \left( \cup_{n=1}^K C_n\right)\subset C_\ell$. This implies that $S_{K+1} = C_{\ell}\cap C_{K+1}$ and we also have $R=C_{K+1}\setminus S_{K+1}$. 
	
	We now prove part 1); part 2) follows using analogous arguments. By the inductive hypothesis, there exists $(r, x')\in (0,\infty) \times M_{G'}$ such that $N_{V\setminus R}\sim \operatorname{nm}_{G'}(r, x')$. 	
	By Theorem \ref{thm:marginal}, we have
	\[
	\mathbb{P}(N_{C_\ell}=n_{C_\ell}) = \operatorname{nm}(r,(x')^{C_\ell})(n_{C_\ell}),
	\]
    where $(x')^{C_\ell}$ is defined in \eqref{eq-xC}. By the assumption 1),
	\[
	\mathbb{P}(N_{C_{K+1}}=n_{C_{K+1}}) = \operatorname{nm}(\tilde{r}, \tilde{x})(n_{C_{K+1}})
	\]
	for some $\tilde{r}>0$ and $\tilde{x}\in (0,\infty)^{C_{K+1}}$ with $|\tilde{x}|<1$.
	Thus, by comparing  marginals 
	\[
	(N_{C_\ell})_{S_{K+1}} = N_{S_{K+1}} = (N_{C_{K+1}})_{S_{K+1}}, 
	\]
	we obtain that 
	\[
	N_{S_{K+1}}\sim \operatorname{nm}\left(r, (x')^{S_{K+1}}\right) \quad\mbox{and } \quad N_{S_{K+1}}\sim \operatorname{nm}\left(\tilde r, \frac{\tilde x_{S_{K+1}}}{1-|\tilde x_{R}|}\right),
	\]
    where $(x')^{S_{K+1}}$ is defined in \eqref{eq-xC}. 
	In particular, we deduce that $\tilde{r}=r$ and obtain $N_{S_{K+1}}\sim \operatorname{nm}\left(r, \frac{\tilde x_{S_{K+1}}}{1-|\tilde x_{R}|}\right)$. 
	Since $S_{K+1}$ separates $V\setminus C_{K+1}$ and $R$ in $G$, we have
	\begin{align*}
		\mathbb{P}(N=n) &=  \frac{ \mathbb{P}(N_{V\setminus R}=n_{V\setminus R}) \mathbb{P}(N_{C_{K+1}}=n_{C_{K+1}})}{\mathbb{P}(N_{S_{K+1}}=n_{S_{K+1}})} \\
		&=
		\frac{\gbinomm{|n_{V\setminus R}|+r-1}{n_{V\setminus R}}{G'}\Delta_{G'}^r(x') (x')^{n_{V\setminus R}} \binom{|n_{C_{K+1}}|+r-1}{n_{C_{K+1}}} \tilde{x}^{n_{C_{K+1}}}\left(1-\left|\tilde x \right|\right)^r}{
			\binom{|n_{S_{K+1}}|+r-1}{n_{S_{K+1}}} \left(\frac{\tilde{x}_{S_{K+1}}}{1-|\tilde{x}_R|}\right)^{n_{S_{K+1}}}\left(1-\left| \frac{\tilde{x}_{S_{K+1}}}{1-|\tilde{x}_R|} \right|\right)^r}\\
		&\propto \gbinomm{|n|+r-1}{n}{G}
		(x'_{V\setminus C_{K+1}})^{n_{V\setminus C_{K+1}}} (x'_{S_{K+1}}(1-|\tilde{x}_R|))^{n_{S_{K+1}}}  \tilde{x}_R^{n_{R}} = \gbinomm{|n|+r-1}{n}{G} x^n,
	\end{align*}
	where we have used the second equality of the first part of Lemma \ref{lem:explicit-constants} and denoted for $i\in V$,
	\[
	x_i = \begin{cases}
		x'_i, & i\in V\setminus C_{K+1},\\
		\tilde{x}_i, & i\in R,\\
		x'_{S_{K+1}}(1-|\tilde{x}_R|), & i\in S_{K+1}.
	\end{cases}
	\]
	Since $\sum_{n\in\mathbb{N}^V} \mathbb{P}(N=n)=1<\infty$, by Lemma \ref{lem:MGseries}, we conclude that  $x\in M_G$, and thus $N\sim \operatorname{nm}_G(r, x)$. 
\end{proof}

\subsection{Proofs from Section \ref{sec:org7b431d8}}
\begin{proof}[Proof of Theorem \ref{thm-nm-proba-interpretation}]
	We have for \(n\in \mathbb{N}^V\),
	\begin{align*}
		\mathbb{P}( \varepsilon(\ell)=n) &= \frac{\sum_{[w]\in L\colon\varepsilon([w])=n} \pi_w(x)}{\sum_{[w']\in L}\pi_{w'} (x)} \\
		&=\left( \sum_{[w]\in L\colon\varepsilon([w])=n}\prod_{i\in V} x_i^{n_i} \right) \left( \sum_{[w']\in L} \pi_{w'}(x)\right)^{-1}\\
		&=K_G(n)\prod_{i\in V}x_i^{n_i}\,\left( \sum_{[w']\in L} \pi_{w'}(x)\right)^{-1},
	\end{align*}
	where \(K_G(n) = \sum_{[w]\in L\colon\varepsilon([w])=n}1 = \left|\{[w]\in L\colon\varepsilon([w])=n \}\right|\). Thus, by the Cartier-Foata Theorem \ref{CFT}, we get
	$$
	\mathbb{P}( \varepsilon(\ell)=n)=K_G(n) \Delta_G(x)\prod_{i\in V} x_i^{n_i}.
	$$
	The constants \(K_G(n)\) are then uniquely determined by the multivariate power series expansion
	\[\Delta_G^{-1}(x) = \sum_{n\in\mathbb{N}^V} K_G(n)\prod_{i\in V}x_i^{n_i}.\]
 In view of the second part of Definition \ref{Ddel} with $r=1$, we see that  \(K_G(n)=\gbinomm{|n|}{n}{G}\) and the proof is complete.
\end{proof}

\begin{proof}[Proof of Theorem \ref{thm-mult-proba-interpretation}]
	By definition \(\ell\) contains at most one vertex from every maximal clique of \(G\).  For \(n\in \mathbb{N}_{G,1}\) we have $\varepsilon(\ell)=n$ i.f.f.  $\ell=C(n):=\{i\in V,\ n_i=1\}\in \mathcal{C}_{G^*}$. Therefore
	\[\mathbb{P}(\varepsilon(\ell)=n)=\mathbb{P}(\ell=C(n))=\tfrac{y^n}{\delta_G(y)}\]
	and we  note that \(\gbinom{1}{n}{\,G}=1\).
\end{proof}

\subsection{Proofs from Section \ref{sec:orgb65e48a}}
The proof of Theorem  \ref{thm-norcons} is based on Theorem \ref{thm-independent-u-w} and the following lemma, which we state without a proof. It is a simple consequence of the fact that each moral DAG  is related to a perfect elimination ordering of  its  vertices.
\begin{Lemma}\ 
	\begin{enumerate}[label={\arabic*)}, itemjoin={,}, itemjoin*={, and}]
		\item If \(\mathcal{G}\) is a DAG  with arbitrary skeleton, then there exists  at least one  sink vertex in $\mathcal G$.
		\item If \(\mathcal{G}\) is a moral DAG  then any sink vertex in $\mathcal G$ is simplicial  in the skeleton of \(\mathcal{G}\).
		\item Let \(\mathcal{G}\) be a moral DAG.  Let $i_0$ be its sink vertex.  Consider \(V'= V\setminus \{i_0\}\) and \(\mathcal{G}'=\mathcal{G}_{V'}\). Then parent functions $\mathfrak{pa}_\mathcal G$ and $\mathfrak{pa}_{\mathcal G'}$  coincide on \(V'\).
	\end{enumerate}
	\label{lem-sink}
\end{Lemma}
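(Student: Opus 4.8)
The plan is to prove the three items in turn; each is a short combinatorial fact about finite directed acyclic graphs, and the only care needed is to match the definitions of \emph{sink}, of \emph{v-structure} (morality), and of \emph{induced digraph} recalled in Section~\ref{sec:orgb4d09d8}. For item~1, since $\mathcal{G}$ is finite and acyclic, every directed path visits each vertex at most once and so has length at most $|V|-1$; choosing a directed path $\ell_0\to\ell_1\to\cdots\to\ell_m$ of maximal length, I claim $\ell_m$ is a sink. Indeed, if $\ell_m$ had a child $v$, then $v\notin\{\ell_0,\ldots,\ell_m\}$ (otherwise the path would close a directed cycle), and $\ell_0\to\cdots\to\ell_m\to v$ would be a strictly longer directed path, a contradiction; hence $\mathfrak{ch}(\ell_m)=\emptyset$. (Alternatively one may invoke a topological ordering of a DAG and take its last vertex.)

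For item~2, let $i_0$ be a sink of a moral DAG $\mathcal{G}$ with skeleton $G$. Since $\mathfrak{ch}(i_0)=\emptyset$, any edge $\{i,i_0\}\in E$ must arise from a directed edge $i\to i_0$, so $\mathfrak{nb}_G(i_0)=\mathfrak{pa}(i_0)$. Morality forbids the v-structure $i\to i_0\leftarrow i'$, i.e.\ any two distinct parents of $i_0$ are adjacent in $G$; hence $\mathfrak{nb}_G(i_0)$ induces a complete subgraph of $G$, which is exactly the statement that $i_0$ is simplicial in $G$.

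For item~3, keep $i_0$ a sink, put $V'=V\setminus\{i_0\}$ and $\mathcal{G}'=\mathcal{G}_{V'}$. For any $j\in V'$, no parent of $j$ in $\mathcal{G}$ can be $i_0$, since that would be a directed edge $i_0\to j$ contradicting $\mathfrak{ch}(i_0)=\emptyset$; thus $\mathfrak{pa}_{\mathcal{G}}(j)\subseteq V'$. As the induced digraph $\mathcal{G}'$ retains every directed edge with both endpoints in $V'$, we get $\mathfrak{pa}_{\mathcal{G}'}(j)=\mathfrak{pa}_{\mathcal{G}}(j)\cap V'=\mathfrak{pa}_{\mathcal{G}}(j)$ for all $j\in V'$. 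None of these steps presents a real obstacle: the lemma is essentially definitional. Its role, which is what makes it worth isolating, is to license peeling off one sink vertex at a time — using items~2 and~3 together with the change of variables $u^{\mathcal{G}}$, $w^{\mathcal{G}}$ and Theorem~\ref{thm-independent-u-w} — in the inductive computation of $K_G(\alpha,\beta)$ and $k_G(\alpha,\beta)$ in Theorem~\ref{thm-norcons}.
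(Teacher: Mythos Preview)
Your proof is correct. The paper actually states this lemma without proof, merely remarking that it ``is a simple consequence of the fact that each moral DAG is related to a perfect elimination ordering of its vertices''; your direct argument from the definitions (maximal-length path for item~1, $\mathfrak{nb}_G(i_0)=\mathfrak{pa}(i_0)$ plus morality for item~2, and $\mathfrak{pa}_{\mathcal G}(j)\subseteq V'$ for item~3) is a clean and self-contained way to fill that gap.
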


\begin{proof}[Proof of Theorem \ref{thm-norcons}]
	Eq. \eqref{eq-KG-per-i} follows from integrating both sides of \eqref{eq-density-x-to-u-Dir} over \((0,1)^V\), and Eq. \eqref{eq-kG-per-i} can be obtained by integration of \eqref{eq-density-y-to-w-IDir} over \((0,\infty)^V\). 
	
	We will prove \eqref{eq-KG-C-S} by induction with respect to the size of \(V\).  The proof of \eqref{eq-kG-C-S} follows according to the same lines and we skip it.  For \(V=\{i\}\), we see that \(\operatorname{Dir}_G(\alpha,\beta)\) coincides with a Beta \(\operatorname{B}_I(\alpha_i,\beta)\) distribution, i.e. \(K_G(\alpha,\beta)=\frac{\Gamma(\alpha_i+ \beta)}{\Gamma(\alpha_i)\Gamma(\beta)}\) and thus \eqref{eq-KG-C-S} holds true.
	
	Now consider a moral DAG \(\mathcal{G}\) with skeleton \(G\). Assume that \eqref{eq-KG-C-S} holds true for any decomposable graph with number of vertices strictly less than \(|V|\). By Lemma \ref{lem-sink} there exists a  sink vertex in   \(\mathcal{G}\), say \(i_0\). Since \(\mathcal{G}\) is moral, we have  \(\mathfrak{pa}_\mathcal G(i_0) \in \mathcal{C}_G\). Denote \(V' = V \setminus \{i_0\}\). Applying \eqref{eq-KG-per-i} to \(G\), we have 
	\[K_G(\alpha,\beta)= \frac{\Gamma(|\alpha_{\overline{\mathfrak{pa}}(i_0)}| + \beta)}{\Gamma(|\alpha_{\mathfrak{pa}(i_0)}| + \beta) \Gamma(\alpha_{i_0})} \prod_{i\in V'} \frac{\Gamma(|\alpha_{\overline{\mathfrak{pa}}(i)}| + \beta)}{\Gamma(|\alpha_{\mathfrak{pa}(i)}| + \beta) \Gamma(\alpha_i)}.\]
	Define the induced subgraph \(G'=G_{V'}\) and induced directed subgraph \(\mathcal{G}' = \mathcal{G}_{V'}\). As \(i_0\) is a  sink vertex ,  \(\mathcal{G}'\) is a moral DAG with (decomposable)  skeleton \(G'\). Moreover, by Lemma \ref{lem-sink}, the parent function \(\mathfrak{pa}'\) of \(\mathcal{G}'\) coincides on \(V'\) with the parent function \(\mathfrak{pa}\) of \(\mathcal{G}\). By inductive hypothesis applied to the product $\Pi_{i\in V'}$ above, we have
	\begin{equation}
		\label{eq-KG-alpha-beta-induction}
		\begin{aligned}
			K_G(\alpha,\beta) =  \frac{\Gamma(|\alpha_{\overline{\mathfrak{pa}}(i_0)}| + \beta)}{\Gamma(|\alpha_{\mathfrak{pa}(i_0)}| + \beta) \Gamma(\alpha_{i_0})} \times\frac{ \prod_{C\in \mathcal{C}_{G'}^+} \Gamma(|\alpha_C| + \beta)}{\Gamma(\beta)^{m'} \prod_{i\in V'} \Gamma(\alpha_i) \prod_{S\in \mathcal{S}_{G'}^-} \Gamma(|\alpha_S| + \beta)^{\nu_S^{G'}}},
		\end{aligned}
	\end{equation}
	where \(m'\) is the number of connected components of \(G'\)
    . To complete the induction step, we distinguish two cases.
	\begin{enumerate}[label={\arabic*)}, itemjoin={,}, itemjoin*={, and}]
		\item Either \(\mathfrak{pa}(i_0) = \emptyset\), i.e., \(i_0\) is not connected in \(G\) to any vertices from \(V'\). Then \(m=m'+1\), $\nu_S^{G}=\nu_S^{G'}$ for any \(S\in\mathcal{S}_{G'}^- = \mathcal{S}_G^-\) and \(\mathcal{C}_G^+ = \mathcal{C}_{G'}^+ \cup \{\{i_0\}\}\). Moreover, \(\overline{\mathfrak{pa}}(i_0)=\{i_0\}\).  Therefore, $\Gamma(|\alpha_{\overline{\mathfrak{pa}}(i_0)}| + \beta)=\Gamma(\alpha_{i_0} + \beta)$ and $\Gamma(|\alpha_{\mathfrak{pa}(i_0)}| + \beta)=\Gamma(\beta)$ and thus \eqref{eq-KG-alpha-beta-induction} yields 
        \eqref{eq-KG-C-S}. 
		\item Or \(\mathfrak{pa}(i_0) \ne \emptyset\), then \(m=m'\). Since \(\mathfrak{pa}(i_0) \subset  V'\) forms a clique in \(G'\), there are two possibilities:
		\begin{enumerate}
			\item \(\mathfrak{pa}(i_0) \in \mathcal{C}_{G'}^+\). Consequently, $\nu_S^{G'}=\nu_S^{G}$ for all \(S\in\mathcal{S}_{G'}^- = \mathcal{S}_G^-\) and \(\mathcal{C}_G^+ = \{\overline{\mathfrak{pa}}(i_0)\} \cup \left( \mathcal{C}_{G'}^+ \setminus   \{\mathfrak{pa}(i_0)\} \right)\).  Thus, writing
            $$\prod_{C\in \mathcal{C}_{G'}^+} \Gamma(|\alpha_C| + \beta)=\Gamma(| \alpha_{\mathfrak{pa}(i_0)}| + \beta) \prod_{C\in \mathcal{C}_{G'}^+ \setminus \{\mathfrak{pa}(i_0)\} } \Gamma(|\alpha_C| + \beta)$$ together with
            \eqref{eq-KG-alpha-beta-induction} 
            yield \eqref{eq-KG-C-S}.
			\item \(\mathfrak{pa}(i_0) \notin \mathcal{C}_{G'}^+\). Then \(\mathcal{C}_G^+ = \{\overline{\mathfrak{pa}}(i_0)\} \cup \mathcal{C}_{G'}^+\), $S_0 = \mathfrak{pa}(i_0) \in \mathcal{S}_G^-$, \(\nu_{S_0}^G = \nu_{S_0}^{G'}+1\) and \(\nu_{S}^G = \nu_{S}^{G'}\) for $S\in  \mathcal{S}_{G'}^-\setminus\{S_0\}$. Thus \eqref{eq-KG-alpha-beta-induction} reduces to \eqref{eq-KG-C-S}. 
		\end{enumerate}
	\end{enumerate}
\end{proof}

\subsection{Proofs from Section \ref{sec:org6c5fe5d}}

\begin{proof}[Proof of Theorem \ref{thm-independent-u-w}]
	\begin{enumerate}[label={\arabic*)}, itemjoin={,}, itemjoin*={, and}]
		\item Since for \(i\in V\), \(x_i=u^{\mathcal{G}}_i \prod_{j\in \mathfrak{ch}(i)}(1-u_j)\), the Jacobian of \(u^{\mathcal{G}}\) is
		\[\left| \frac{\partial x}{\partial u } \right| = \prod_{i\in V} \prod_{j\in \mathfrak{ch}(i)}(1-u_j) = \prod_{i\in V} (1-u_i)^{|\mathfrak{pa}(i)|}.\]
		By \eqref{eq-deltaG} and 3) of Lemma \ref{lem-clique-poly-new-coord}, the density $f$ of \(U^{\mathcal{G}}\) is
		\begin{equation}
			\label{eq-density-x-to-u-Dir}
			\begin{aligned}
				f(u) & = \left(\prod_{i\in V} (1-u_i)^{|\mathfrak{pa}(i)|}\right)\,K_G(\alpha,\beta) \left( \prod_{i\in V}(1-u_i) \right)^{\beta-1} \prod_{i\in V} \left( u_i \prod_{j\in \mathfrak{ch}(i)}(1-u_j) \right)^{\alpha_i-1} \mathbbm{1}_{(0,1)^V}(u) \\
				& = K_G(\alpha,\beta) \prod_{i\in V} u_i^{\alpha_i-1} (1-u_i)^{|\alpha_{\mathfrak{pa}(i)}|+\beta-1} \mathbbm{1}_{(0,1)}(u_i).
			\end{aligned}
		\end{equation}
		\item Similarly, we have
		\[\left| \frac{\partial y}{\partial w } \right| = \prod_{i\in V} \prod_{j\in \mathfrak{ch}(i)}(1+w_j) = \prod_{i\in V} (1+w_i)^{|\mathfrak{pa}(i)|}.\]
		Now by \eqref{eq-deltaG} and 4) of Lemma \ref{lem-clique-poly-new-coord}, the density $g$  of \(W^{\mathcal{G}}\) is
		\begin{equation}
			\label{eq-density-y-to-w-IDir}
			\begin{aligned}
				g(w) & = \left(\prod_{i\in V}(1+w_i)^{|\mathfrak{pa}(i)|}\right)\,k_G(\alpha,\beta) \left( \prod_{i\in V}(1+w_i) \right)^{-\beta} \prod_{i\in V} \left( w_i \prod_{j\in \mathfrak{ch}(j)}(1+w_j) \right)^{\alpha_i-1} \mathbbm{1}_{(0,\infty)^V}(w) \\
				&= k_G(\alpha,\beta) \prod_{i\in V} w_i^{\alpha_i-1} (1+w_i)^{|\alpha_{\mathfrak{pa}(i)}|-\beta} \mathbbm{1}_{(0,\infty)}(w_i).
			\end{aligned}
		\end{equation}
	\end{enumerate}
\end{proof}

\begin{proof}[Proof of Theorem \ref{thm-G-Markov-GDir}]
	We only detail the case \(X\sim \operatorname{Dir}_G(\alpha,\beta)\), the case \(X\sim \operatorname{IDir}_G(\alpha,\beta)\) can be treated with exactly the same argument.
	
	Choose a moral DAG \(\mathcal{G}\) with skeleton \(G\), by Theorem \ref{thm-independent-u-w}, the components of the random vector \(U^{\mathcal{G}}=u^{\mathcal{G}}(X)\) are independent. By 3) of Lemma \ref{lem-clique-poly-new-coord} we have, for all \(i\in V\),
	\[X_i=U_i^{\mathcal{G}} \prod_{j\in \mathfrak{ch}(i)}(1-U^{\mathcal{G}}_j).\]
	Now for \(A,B\) disconnected in \(G\) and \(i\in A, \ j\in B\),  in view of morality of $\mathcal G$ we have \(\mathfrak{ch}(i)\cap \mathfrak{ch}(j) = \emptyset\). Consequently,   \(X_i\) and \(X_j\) are build on separate subsets of \(\{U_k\}_{k\in V}\), and therefore they are independent.
\end{proof}

\begin{proof}[Proof of Theorem \ref{thm:dirC}]
	Recall definition of the mapping $M_G\ni x\mapsto x^C$ from \eqref{eq-xC}. 
	Consider the function $\psi_C\colon M_G\ni x\mapsto (x^C, x_{V\setminus C})$, which is clearly a bijection. 
	By  the first part of \eqref{eq-Delta_GV-v}  of Lemma \ref{lem-properties-of-clique-polynomials} and the definition of $x^C$ we have
	\[
	\Delta_G(x) = \Delta_{V\setminus C}(x)\left(1-\sum_{i\in C} x_i^C\right) = \Delta_{G_{V\setminus C}}(x_{V\setminus C}) \Delta_{K_C}(x^C).
	\]
	The above factorization together with \eqref{eq-support-MG} implies that $\psi_C(M_G) =  M_{K_C}\times M_{G_{V\setminus C}}$.  Thus, $\psi_C\colon M_G\mapsto M_{K_C}\times M_{G_{V\setminus C}}$ is a diffeomorphism and the joint density of  $\psi_C(X)=(X^C, X_{V\setminus C})$ is given by 
	\begin{equation*}
		\begin{aligned}
			f_{(X^C, X_{V\setminus C})}(x^C,x_{V\setminus C}) &= f_{X}\left(\psi_C^{-1}(x^C,x_{V\setminus C})\right) |J_{\psi_C^{-1}}\left(x^C,x_{V\setminus C}\right)| \\
			&=
			\prod_{i\in C} \left(x_i^C \frac{\Delta_{V\setminus C}(x_{V\setminus C})}{\Delta_{\mathfrak{nb}_{G^*}(i)}(x_{V\setminus C})}\right)^{\alpha_i-1} 
			\prod_{i\in V\setminus C} x_i^{\alpha_i-1} \Delta^{\beta-1}_{V\setminus C}(x_{V\setminus C})\Delta^{\beta-1}_{C}(x^C)\\
			&\qquad\cdot\mathbbm{1}_{M_{K_C}\times M_{G_{V\setminus C}}}(x^C,x_{V\setminus C})		\prod_{i\in C}  \frac{\Delta_{V\setminus C}(x_{V\setminus C})}{\Delta_{\mathfrak{nb}_{G^*}(i)}(x_{V\setminus C})} \\
			&= f_{\operatorname{Dir}_{K_C}(\alpha_C,\beta)}(x^C) \cdot g(x_{V\setminus C}), 
		\end{aligned}
	\end{equation*}
    for some function $g$, 
	where $$|J_{\psi_C^{-1}}\left(x^C,x_{V\setminus C}\right)|= \left| \frac{\partial \psi_C^{-1}}{\partial x}\right|=\prod_{i\in C}  \frac{\Delta_{V\setminus C}(x_{V\setminus C})}{\Delta_{\mathfrak{nb}_{G^*}(i)}(x_{V\setminus C})} $$ is the Jacobian of $\psi_{C}^{-1}$. Thus, the independence follows. 
	The case of $\operatorname{IDir}_G$ is treated analogously. 
\end{proof}

\subsection{Proofs from Section \ref{sec:org9c216d9}}

\begin{proof}[Proof of Theorem \ref{thm:charact}]
	We present a proof of a characterization of $\operatorname{Dir}_G$. The proof of a characterization of $\operatorname{IDir}_G$ goes along the same lines. We will indicate the key differences after the proof of the $\operatorname{Dir}_G$ case.
	
	If $G$ is complete, then this result follows from \cite[Theorem 1]{JM80} as was indicated in Section \ref{sec:org9c216d9}.
	In particular, this implies that the result is true for $|V|=2$. 
	
	In the sequel we assume that $|V|>2$ and that $G$ is not complete.  We adapt to the present setting the induction argument from the proof of Theorem 4 in \cite{PW18}, where it was used for a statistical model with a tree structure.  
	Assume  that the assertion holds for all decomposable graphs with $|V|\leq d$ and consider a graph $G$ with $d+1$ vertices. 
	Let $i_0$ be a  simplicial vertex in $G$. Let $V'=V\setminus\{i_0\}$ and $G'=G_{V'}$. Define the mapping $M_G\ni x\mapsto x'\in M_{G'}$ by 
	\[
	x'_i = \begin{cases}
		\frac{x_i}{1-x_{i_0}}, & i\in \mathfrak{nb}_G(i_0),\\
		x_i, & i\in V\setminus \overline{\mathfrak{nb}}_G(i_0)=\mathfrak{nb}_{G^*}(i_0).
	\end{cases}
	\]
	Let $X'$ be the image of $X$ under the above map.
	
	For any moral DAG $\mathcal{G}$ with skeleton $G$ such that  $i_0$ is a sink of $\mathcal G$  the following assertion holds true:
	\begin{align}\label{eq:assertion}
		\left(u^{\mathcal{G}}_j(x)\right)_{j\in V'} = u^{\mathcal G'}(x'),
	\end{align}
	where $\mathcal G'=\mathcal{G}_{V'}$ is the induced directed graph.  In order to prove \eqref{eq:assertion} denote $u^{\mathcal{G}}:=u^{\mathcal{G}}(x)$,  $\mathfrak{ch}:=\mathfrak{ch}_\mathcal G$ and $\mathfrak{ch}':=\mathfrak{ch}_{\mathcal G'}$. 
	By \eqref{eq-x-in-u}, we have $x_i = u_i^{\mathcal{G}}\prod_{j\in \mathfrak{ch}(i)}(1-u_j^{\mathcal{G}})$, $i\in V$. 
	In particular, $x_{i_0}=u^\mathcal G_{i_0}$. Consequently,
	\[
	x'_i = \begin{cases}
		\frac{x_i}{1-x_{i_0}} = \frac{u_i^{\mathcal{G}}\prod_{j\in \mathfrak{ch}(i)}(1-u_j^{\mathcal{G}})}{1-u^\mathcal G_{i_0}}, & i\in \mathfrak{nb}_G(i_0),\\
		x_i = u_i^{\mathcal{G}}\prod_{j\in \mathfrak{ch}(i)}(1-u_i^{\mathcal{G}}), & i\in \mathfrak{nb}_{G^*}(i_0).
	\end{cases}
	\]
 Since 
	$\mathfrak{ch}(i) =  \mathfrak{ch}'(i)\cup \{i_0\}$ for $i\in \mathfrak{nb}_G(i_0)$ and $\mathfrak{ch}(i) =	\mathfrak{ch}'(i)$ for $i\in \mathfrak{nb}_{G^*}(i_0)$, we get
    $$
    x_i'= u_i^{\mathcal{G}}\prod_{j\in \mathfrak{ch}'(i)}(1-u_j^{\mathcal{G}}),\quad i\in V'.
    $$
	On the other hand, referring again to \eqref{eq-x-in-u}, we can write \(x_i'=u_i^{\mathcal{G}'}\prod_{j\in \mathfrak{ch'}(i)}(1-u_j^{\mathcal{G}'})\), $i\in V'$.  Comparing these two representations of $x_i'$, $i\in V'$, we conclude that \eqref{eq:assertion} holds true. 
	
	By assumption,  components of $u^{\mathcal{G}}(X)$ are independent. Thus, by \eqref{eq:assertion},  in view of the definition of $X'$, we see that the components of $u^{\mathcal{G}'}(X')$ are also independent. 
	
	If we add a vertex $i_0$ to a moral DAG $\mathcal{G}'$ with skeleton $G'$ such that $i_0$ is a  simplicial sink  of the newly obtained directed graph $\mathcal{G}$, then $\mathcal{G}$ is  a moral DAG. Thus, 
	any moral DAG $\mathcal{G}'$ with skeleton $G'$ can be seen as an induced directed subgraph $\mathcal{G}_{V'}$, where $\mathcal{G}$ is as above (i.e.  $i_0$ is a sink in a moral DAG $\mathcal G$). In view of independence of components of $u^{\mathcal{G}'}(X')$, by inductive hypothesis, we conclude that $X'$ has distribution $\operatorname{Dir}_{G'}(\alpha,\beta)$ for some $\alpha\in (0,\infty)^{V'}$ and $\beta>0$. Moreover, since $X_{i_0}=u^\mathcal{G}_{i_0}(X)$ and $X'$ is a function of $(u_i^{\mathcal{G}}(X))_{i\in V'}$, we obtain that $X_{i_0}$ and $X'$ are independent.
	
	The above reasoning, where we used a simplicial vertex $i_0$,  applies to any simplicial vertex in $G$. Since $G$ is assumed non-complete, there exists another simplicial vertex, say $i_1$, which is not a neighbor of $i_0$. In fact, each simplicial vertex belongs to exactly one maximal clique and there are at least two maximal cliques in a non-complete graph. Thus, if we repeat our argument with $i_1$ instead of $i_0$, denote  $G''=G_{V''}$ with $V''=V\setminus \{i_1\}$ and define $X''$ as $X'$ but use $i_1$ instead of $i_0$, then we obtain that $X_{i_1}$ and $X''$ are independent and $X''\sim  \operatorname{Dir}_{G''}(\alpha'',\beta'')$. Since $i_0$ and $i_1$ are not neighbors in $G$, we have $X_{i_0}=X''_{i_0}$ and by Remark \ref{rem:inproof} we obtain $X_{i_0}\sim \operatorname{B}_I(\alpha_{i_0},\beta_{i_0})$ for some $\alpha_{i_0},\beta_{i_0}>0$. 
	
	Since the Jacobian of the diffeomorphism $M_G\ni x\mapsto (x_{i_0},x')\in (0,1)\times M_{G'}$ equals $(1-x_{i_0})^{-|\mathfrak{nb}_G(i_0)|}$  we can write the density $f_X$ of $X$ as 
	\begin{align}
		f_X(x) &= \frac{f_{X_{i_0}}(x_{i_0}) }{(1-x_{i_0})^{|\mathfrak{nb}_G(i_0)}|}\,f_{X'}(x') \nonumber\\ 
		&\propto \frac{x_{i_0}^{\alpha_{i_0}-1}(1-x_{i_0})^{\beta_{i_0}-1}}{(1-x_{i_0})^{|\mathfrak{nb}_G(i_0)}|}\,  \left(\prod_{i\in \mathfrak{nb}_{G}(i_0)} \left(\frac{x_i}{1-x_{i_0}}\right)^{\alpha_i-1} \right)\, \left(\prod_{i\in \mathfrak{nb}_{G^*}(i_0)} x_i^{\alpha_i-1}\right) \Delta_{G'}^{\beta-1}\left(x'\right) \nonumber\\
		&=\, \prod_{i\in V} x_i^{\alpha_i-1} \Delta_G^{\beta-1}(x)(1-x_{i_0})^{\beta_{i_0}-|\alpha_{\mathfrak{nb}_G(i_0)}|-\beta}, \label{eq:last}
	\end{align}
	where in the last equality above we have used identity \eqref{eq:Deltax}.
	Since \eqref{eq:last} has to hold for any simplicial vertex $i_0$, we obtain that $\beta_{i_0}=|\alpha_{\mathfrak{nb}_G(i_0)}|+\beta$ and the proof of the case $\operatorname{Dir}_G$ is complete.
	
	Now we turn to the case of $\operatorname{IDir}_G$. 
	If $G$ is complete and $Y$ has inverted Dirichlet distribution, then  with $X_i = Y_i/(1+\sum_{k=1}^n Y_k)$, $i\in V$, $X$ follows the Dirichlet distribution.  Also   independence of components of a vector $w^{\mathcal{G}}(Y)$, $\mathcal{G}$ being an arbitrary moral DAG  with complete skeleton, is equivalent to  independence of components of vector $u^{\mathcal{G}'}(X)$, where $\mathcal{G}'$ is a moral DAG defined by inverting  directions of all edges in $\mathcal{G}$.  Consequently, in the complete graph case, the  independence  characterization of $\operatorname{IDir}_G$ follows from  the respective  characterization of $\operatorname{Dir}_G$. 
	
	All arguments of the inductive proof remain valid if the mapping $(0,\infty)^V\ni y\mapsto y'\in (0,\infty)^{V'}$ is defined by $y_i'=y_i/(1+y_{i_0})$ for $i\in \operatorname{nb}_G(i_0)$ and $y_i'=y_i$ otherwise. Then, the analog of \eqref{eq:assertion} is $\left(w^{\mathcal{G}}_i(y)\right)_{i\in V'} = w^{\mathcal{G}'}(y')$. Finally, the univariate marginals of the inverted Dirichlet are $\operatorname{B}_{II}$. 
\end{proof}

\begin{proof}[Proof of Theorem \ref{thm:simpleHyper}]
	We prove the result only for the case $\operatorname{Dir}_G$, the case $\operatorname{IDir}_G$ is analogous.
	Recall the notations from Section \ref{sec:org9c216d9} and a definition of the mapping $u^{\mathcal{G}}$ where $\mathcal{G}$ is a moral DAG on $G$, see \eqref{eq-u-calG}.  We will write $\mathfrak{pa}:=\mathfrak{pa}_\mathcal G$, $\mathfrak{de}:=\mathfrak{de}_\mathcal G$ and $\mathfrak{nde}:=\mathfrak{nde}_\mathcal G$. 
    
    Note that
    \begin{align}
		&p^{\mathfrak{nde}(i)}_{k_{\mathfrak{nde}(i)}}(X)
		=\sum_{k_{\mathfrak{de}(i)}\in\mathbb{N}^{\mathfrak{de}(i) }}\mathbb{P}(N=k|X)=\sum_{k_{\mathfrak{de}(i)}\in\mathbb{N}^{\mathfrak{de}(i) }}\,\prod_{\ell\in V}\,\mathbb P(N_\ell=k_\ell|N_{\mathfrak{pa}(\ell)}=k_{\mathfrak{pa}(\ell)},X) \nonumber \\
		&= \left(
		\sum_{k_{\mathfrak{de}(i)}\in\mathbb{N}^{\mathfrak{de}(i) }}\,\prod_{\ell\in\mathfrak{de}(i)}
		p_{k_{\overline{\mathfrak{pa}}(\ell)}}^{\{\ell\}|\mathfrak{pa}(\ell)}(X)
		\right)\prod_{j\in \mathfrak{nde}(i)} p_{k_{\overline{\mathfrak{pa}}(j)}}^{\{j\}|\mathfrak{pa}(j)}(X)
		=\prod_{j\in \mathfrak{nde}_{\mathcal{G}}(i)}\,p_{k_{\overline{\mathfrak{pa}}_\mathcal{G}(j)}}^{\{j\}|\mathfrak{pa}_\mathcal{G}(j)}(X). \label{lastt}
	\end{align}
	By Theorem \ref{thm:Markov} we have (recall the notation we introduced prior to Definition \ref{def-hyper-Markovs}) we have
	$$
	p_{k_{\overline{\mathfrak{pa}}(i)}}^{\{i\}|\mathfrak{pa}(i)}(X)=\operatorname{nm}\left(r+|k_{\mathfrak{pa}(i)}|, u_i^\mathcal{G}(X)\right)(k_i).
	$$
  Combining this with \eqref{lastt} we conclude that $p^{\mathfrak{nde}(i)}_{k_{\mathfrak{nde}(i)}}(X)$ is a function  of $u^{\mathcal G}_j(X)$, $j\in\mathfrak{nde}_\mathcal G(i)$. 
    
 On the other hand, the strong directed hyper Markov property states that for all $i\in V$,
	\[
	p_{\{i\}|\mathfrak{pa}_\mathcal{G}(i)}\mbox{ and }p_{\mathfrak{nde}_\mathcal{G}(i)}\mbox{ are independent}.
	\]
	In view of the above observations it is clearly equivalent to the property: for all $i\in V$, 
	\begin{align}\label{eq:ind}
		u_i^{\mathcal{G}}(X)\mbox{ and } u_{\mathfrak{nde}_{\mathcal{G}(i)}}^{\mathcal{G}}(X)\mbox{ are independent}.
	\end{align}
	Let $d=|V|$ and let $(\sigma(1),\ldots,\sigma(d))$ be a reversed perfect elimination ordering of vertices defined by $\mathcal{G}$, i.e. if $\sigma(i)\to \sigma(j)$ in $\mathcal{G}$, then we have $i>j$.  
	Since $G$ is connected, for each $i\in \{1,\ldots,d\}$ we have $\{ \sigma(k)\}_{k=1}^{i-1}\subset \mathfrak{nde}_{\mathcal{G}}(\sigma(i))$. 
	Thus, \eqref{eq:ind} implies that for all $i=2,\ldots,d$,
	\[
	u_{\sigma(i)}^{\mathcal{G}}(X)\mbox{ and } u_{\{ \sigma(k)\}_{k=1}^{i-1}}^{\mathcal{G}}(X)\mbox{ are independent}.
	\]
In view of the fact that random variables $Z_1,\ldots,Z_d$ are independent if and only if $Z_i$ and $(Z_1,\ldots,Z_{i-1})$ are independent for all $i=2,\ldots,d$, the proof is finished by referring to Theorem \ref{thm:charact}.
\end{proof}

\subsection*{Acknowledgements}
We thank Steffen Lauritzen for his questions regarding hyper Markov properties and for pointing out a connection to classical Dirichlet   negative multinomial  and  Dirichlet multinomial distributions.

I. Danielewska and B. Ko{\l}odziejek were partially supported by the NCN grant UMO-2022/45/B/ST1/00545. \newline  J. Weso\l owski was partially supported by the IDUB grant  1820/366/201/2021, WUT. \newline X. Zeng  acknowledges the Agence Nationale de la Recherche for their financial support via ANR grant RAW ANR-20-CE40-0012-01.

For the purpose of Open Access, the authors have applied a CC-BY public copyright licence to any Author Accepted Manuscript (AAM) version arising from this submission.

\bibliographystyle{plain}
\bibliography{Bibl}
\end{document}